\pgfplotsset{
	discard if not/.style 2 args={
	x filter/.append code={
	\edef\tempa{\thisrow{#1}}
	\edef\tempb{#2}
	\ifx\tempa\tempb
	\else
	
	\fi
	}
	}
}
	\def\pgfplotstable@loc@TMPd{\pgfplotstablegetelem{##1}{#1}\of}
	\edef\tempa{\pgfplotsretval}
	\edef\tempb{#2}
\pgfplotsset{compat=1.14}
\newcommand*{\N}[1]{\left\|#1\right\|}
\newcommand*{\abs}[1]{\left|#1\right|}
\newcommand*{\jmp}[1]{[\![#1]\!]}    
\newcommand*{\mvl}[1]{\{\!\!\{#1\}\!\!\}}
\newcommand{\IN}{\mathbb{N}}\newcommand{\IR}{\mathbb{R}}
\newcommand{\IP}{\mathbb{P}}\newcommand{\IT}{\mathbb{T}}
\newcommand{\QT}{{\mathbb{Q\!T}}}
\newcommand{\calA}{{\mathcal A}}
\newcommand{\calB}{{\mathcal B}}
\newcommand{\calF}{{\mathcal F}}
\newcommand{\calL}{{\mathcal L}}
\newcommand{\calO}{{\mathcal O}}
\newcommand{\calT}{{\mathcal T}}
\newcommand{\calP}{{\mathcal P}}
\newcommand{\calM}{{\mathcal M}}
\newcommand{\Uu}[1]{{\bm{#1}}}
\newcommand{\be}{{\Uu e}}
\newcommand{\bi}{{\Uu i}}
\newcommand{\bj}{{\Uu j}}
\newcommand{\bk}{{\Uu k}}
\newcommand{\bl}{{\Uu\ell}}
\newcommand{\bn}{{\Uu n}}
\newcommand{\br}{{\Uu r}}
\newcommand{\bw}{{\Uu w}}
\newcommand{\bx}{{\Uu x}}
\newcommand{\by}{{\Uu y}}
\newcommand{\bzero}{\Uu{0}}
\newcommand{\bK}{{\Uu K}}
\newcommand{\bI}{{\Uu I}}
\newcommand{\bbeta}{{\Uu\beta}}
\newcommand{\bxi}{{\Uu\xi}}
\newcommand{\tayt}{{\mathsf{T}}}
\newcommand{\Csr}{C_{\mathrm{sr}}}
\newcommand{\Cg}{{C_{\mathrm g}}}
\newcommand{\kmin}{{k_{\min}}}
\newcommand{\vertiii}[1]{{\left\vert\kern-0.25ex\left\vert\kern-0.25ex\left\vert #1 
	\right\vert\kern-0.25ex\right\vert\kern-0.25ex\right\vert}}
\tikzset{middlearrow/.style={
	decoration={markings,
	mark= at position 0.5 with {\arrow{#1}} ,
	},
	postaction={decorate}
	}
}
\definecolor{myblue}{rgb}{0,0,0.6}     
\newtheorem{theorem}{Theorem}[section] 
\newtheorem{lemma}[theorem]{Lemma}
\newtheorem{prop}[theorem]{Proposition}
\newtheorem{rem}[theorem]{Remark}
\newtheorem{Def}[theorem]{Definition}
\pgfplotsset{
	discard if/.style 2 args={
		x filter/.append code={
			\edef\tempa{\thisrow{#1}}
			\edef\tempb{#2}
			\ifx\tempa\tempb
			
			\fi
		}
	},}
\begin{document}

\title{
Polynomial quasi-Trefftz DG for PDEs with smooth coefficients: elliptic problems
}
\author{Lise-Marie Imbert-G\'erard\thanks{Department of Mathematics, University of Arizona, USA  (\href{mailto:lmig@arizona.edu}{lmig@arizona.edu})} ,
Andrea Moiola\thanks{Department of Mathematics, University of Pavia, Italy       (\href{mailto:andrea.moiola@unipv.it}{andrea.moiola@unipv.it})} ,
Chiara Perinati\thanks{Department of Mathematics, University of Pavia, Italy (\href{mailto:chiara.perinati01@universitadipavia.it}{chiara.perinati01@universitadipavia.it})} , 
Paul Stocker\thanks{Faculty of Mathematics, University of Vienna, Austria (\href{mailto:paul.stocker@univie.ac.at}{paul.stocker@univie.ac.at})}}
\date{\today}

\maketitle

\begin{abstract}
Trefftz schemes are high-order Galerkin methods whose discrete spaces are made of elementwise exact solutions of the underlying PDE. 
Trefftz basis functions can be easily computed for many PDEs that are linear, homogeneous, and have piecewise-constant coefficients. 
However, if the equation has variable coefficients, exact solutions are generally unavailable. 
Quasi-Trefftz methods overcome this limitation relying on elementwise ``approximate solutions'' of the PDE, in the sense of Taylor polynomials.
	
We define polynomial quasi-Trefftz spaces for general linear PDEs with smooth coefficients and source term,
describe their approximation properties and, under a non-degeneracy condition, provide a simple algorithm to compute a basis. 
	We then focus on a quasi-Trefftz DG method for variable-coefficient elliptic diffusion--advection--reaction problems, showing stability and high-order convergence of the scheme. 
	The main advantage over standard DG schemes is the higher accuracy for comparable numbers of degrees of freedom.
	For non-homogeneous problems with piecewise-smooth source term we propose to construct a local quasi-Trefftz particular solution and then solve for the difference. 
	Numerical experiments in 2 and 3 space dimensions show the excellent properties of the method both in diffusion-dominated and advection-dominated problems.
	
	\bigskip
	\noindent\textbf{Keywords}: Quasi-Trefftz, 
	Discontinuous Galerkin, 
	Elliptic equation, 
	Diffusion--advection--reaction equation, 
	Smooth coefficients,
	Convergence rates
	
	\bigskip
	\noindent\textbf{Mathematics Subject Classification (2020)}: 
	65N15, 
	65N30, 
	35J25, 
	41A10, 
	41A25 
	\end{abstract}
	
	\section{Introduction}\label{s:Intro}
	\subsection{Motivation for quasi-Trefftz methods}
	Classical Galerkin schemes, such as finite element and discontinuous Galerkin (DG) methods, seek an approximation of a boundary value problem (BVP) solution in a piecewise-polynomial discrete space.
	The most common trial and test spaces contain all piecewise polynomials of some given maximal degree, possibly with some inter-element continuity.
	These spaces are not tuned to approximate the solutions of a given partial differential equation (PDE), instead they contain approximations to all sufficiently regular functions.
	To reduce the number of degrees of freedom (DOFs), i.e.\ the size of the discrete space, and thus the computational cost of the scheme, one can construct more specialized discrete spaces, that are adapted to the PDE to be approximated.
	
	A well-known way to implement this idea is to use a \emph{Trefftz} method: a scheme where all discrete functions are elementwise solutions of the PDE.
	This is feasible when the PDE has piecewise-constant coefficients.
	For instance, Trefftz methods for the Laplace equation $\Delta u=0$ use harmonic polynomials as basis functions \cite{LiShu2006,HMPS14}, while those for the wave equation $\partial_t^2u-\Delta u=0$ use ``polynomial wave'' solutions in space--time \cite{moiola2018space}.
	For PDEs with a zero-order term, no polynomial solutions are available, thus Trefftz methods for the Helmholtz equation $\Delta u+k^2u=0$ typically use complex-exponential plane-wave bases \cite{hiptmair2016survey}.

	The common feature of these Trefftz schemes is that they offer the same accuracy as comparable methods based on full polynomial spaces, using much fewer DOFs.
    A sparsity comparison of a Trefftz DG scheme against other polytopal finite element methods, including Hybrid-DG, Hybrid High-Order, and Virtual Element Methods, has been performed in \cite{2405.16864}. 
    The Trefftz DG scheme is shown to achieve a reduction in complexity comparable to that of the other methods. 
    Furthermore, as the degrees of freedom of the Trefftz DG method are only associated to the mesh elements, the Trefftz DG method generalizes very efficiently to polytopal meshes.
    In \cite{LLS_NM_2024}, the Trefftz DG method is presented for the Stokes problem, and compared to other methods in this context.
	
	However, when the PDE has variable coefficients, the construction of local exact solutions is usually not possible.
	Instead, \emph{quasi-Trefftz} methods can be applied in this case.
	These rely on discrete spaces of functions that, on each element, are solution of the PDE ``up to a small residual'': for the PDE $\calM u=f$, in each mesh element $E$ with diameter $h_E$, every function $v_h$ in the discrete trial space satisfies $|\calM v_h-f|=\calO(h_E^q)$ in $E$ for some fixed exponent $q\in\IN$.
	Both Trefftz and quasi-Trefftz methods are usually formulated as DG schemes.
    Another approach that allows for variable coefficients and non-zero right-hand sides is the embedded Trefftz method \cite{LS_IJMNE_2023}, where the Trefftz basis functions are not explicitly constructed but embedded in a standard DG method.
	
	So far, the quasi-Trefftz idea has been used for oscillatory problems with smooth coefficients:
    in the time-harmonic regime using both polynomial and complex-exponential basis functions \cite{IGS2024,imbert2014generalized},
	in space--time using polynomials for the wave \cite{IGMS_MC_2021} and the Schr\"odinger \cite{GomezMoiola2024} equations.
	Complex-exponential quasi-Trefftz spaces for some homogeneous equations of order $m\geq 2$ were introduced in \cite{IG2021ampl,IGS2021roadmap}, while in \cite{IGS2024} complex-exponential and polynomial quasi-Trefftz spaces were studied for some homogeneous equations of order $m=2$.
	However, no general treatment of the corresponding quasi-Trefftz methods is available.
	
	\subsection{The contributions of this paper}
	The main goal of this paper is to introduce and analyze the degree-$p$ polynomial quasi-Trefftz space (denoted $\QT_f^p(E)$) for the general, order-$m$, linear, partial differential operator $\calM =\sum_{|\bj|\le m}\alpha_\bj D^\bj$.
	The main assumption is that the coefficients $\alpha_\bj$ and the source term $f$ are sufficiently smooth, namely $\alpha_\bj,f\in C^{p-m}(E)$ (where $E$ will then be a mesh element).
	We define the affine space $\QT_f^p(E)$ in \Cref{def:QT}, and prove in \Cref{th:Approx} that it approximates all smooth solutions of $\calM u=f$ with the same convergence rate, with respect to the domain size, compared to the full polynomial space $\IP^p(E)$ of the same degree.
	Under a simple non-degeneracy condition \eqref{hpcoeff}, in \cref{s:ConstructionQT} we provide a simple iterative algorithm to compute the monomial expansion of all quasi-Trefftz polynomials.
	These functions are uniquely determined by their ``Cauchy data'', i.e.\ the values of the first $m$ derivatives on a given hyperplane.
	Their computation requires the partial derivatives of the PDE coefficients $\alpha_\bj$ and right-hand side $f$ at a fixed point $\bx^E$.
	\Cref{Algo:general} thus allows to construct simple bases of $\QT^p_0(E)$ and to verify that the dimension of this space is indeed much smaller than $\mathrm{dim}(\IP^p(E))$, see~\eqref{eq:dimreduc}.
	
	In the following sections we study a quasi-Trefftz DG method for elliptic diffusion--advection--reaction problems. 
	We introduce a BVP in \cref{s:ProblemDAR}, a polytopal mesh in \cref{s:Mesh}, and a DG formulation in \cref{s:DGformulation}.
	We use the classical symmetric interior penalty for the diffusion term and upwind penalization for the advection term.
	To study the convergence of this method, in \cref{s:WellP} we slightly modify the standard DG analysis of e.g.~\cite{di2011mathematical} to handle more general polynomial discrete spaces.
	This allows to prove optimal convergence rates for the quasi-Trefftz DG method in \cref{s:QTDGdiscretizazion}.
	Since the PDE source term $f$ enters the definition of $\QT^p_f(E)$, the trial space is actually an affine space: to write the Galerkin problem as a linear system, we compute an elementwise approximate PDE solution, using again \Cref{Algo:general}, possibly in parallel, and then solve for the difference, see \eqref{qtvariational2}.
	The quasi-Trefftz space could be combined with any other stable and quasi-optimal DG formulation with similar results.
	
	Finally, in \cref{s:numexp} we show some numerical examples in 2 and 3 space dimensions illustrating the capabilities of the method.
	In these examples, the method based on the same DG formulation, discretized with a polynomial quasi-Trefftz discrete space of degree $p$ compared to the full polynomial space of the same degree, achieves the same error and convergence rates, but with considerably fewer DOFs.
	The construction of the quasi-Trefftz basis, following \Cref{Algo:general}, involves a small overhead, but is completely parallelizable: \Cref{tab:ex2} shows that the total computational time for the quasi-Trefftz version of the scheme is lower than for the full-polynomial space.
	We also consider two advection-dominated examples and show that the solutions are well captured in both cases.
	
	The quasi-Trefftz DG method for diffusion--advection--reaction equations is implemented in NGSolve \cite{ngsolve} and the code is freely available.
	This paper is mainly based on the third author's master thesis \cite{perinati2023quasitrefftz}, where some more details can be found.
	
	\section{Polynomial quasi-Trefftz space}\label{s:QT} 
	In this section, we first introduce the polynomial quasi-Trefftz space for a general linear PDE with smooth  coefficients and right-hand side, and prove that it contains high-order approximations of all smooth PDE solutions.
	While the definition and the approximation properties of this space only require the governing PDE to have $C^{p-m}$-smooth coefficients and right-hand side, $p$ being the polynomial degree of the space and $m$ the order of the PDE, practical aspects of quasi-Trefftz also rely on a simple non-degeneracy assumption on the differential operator.
	In particular, the dimension of the quasi-Trefftz space depends on the differential operator, and we show that under assumption~\eqref{hpcoeff} this dimension is much reduced compared to standard polynomial spaces, as expressed in \eqref{eq:dimreduc}.
	In this case, we provide an algorithm for the construction of quasi-Trefftz functions, including for a non-zero-RHS PDE, and more specifically for the construction of a basis for a zero-RHS PDE.
	
	\subsection{Definitions and notation}\label{s:DefNot}
	Let $d \in \IN$ be the space dimension.
	Multi-indices are denoted $\bi:=(i_{1},\ldots,i_{d})
	\in \IN_0^{d}$, their length $|\bi|:=i_{1}+\cdots+i_{d}$, and $\leq$ denotes the partial order
	defined by $\bi\leq \bj$ if $i_k\leq j_k$ for all $k\in\{1,\dots,d\}$.
	As a reminder, the multi-index factorial and binomial coefficients are defined as
	$$\bi!:=i_{1}!\cdots i_{d}!, \qquad
	\binom\bi\bj:=\frac{\bi!}{\bj!(\bi-\bj)!}
	=\binom{i_{1}}{j_{1}}\cdots
	\binom{i_{d}}{j_{d}}.
	$$
	We use standard multi-index notation $D^\bi f := \partial_{x_1}^{i_{1}}\cdots\partial_{x_d}^{i_{d}}f$ for derivatives of a function $f$ of $\bx \in\IR^d$, and  $\bx^\bi=x_1^{i_{1}}\cdots x_d^{i_{d}}$ for monomials.
	
	Let $E\subset \IR^d$ be an open
	set.
	Denote by $\IP^p(E)$ the space of polynomials of degree at most $p\in\IN_0$ defined on $E$.
	The general linear partial differential operator of order $m\in\IN$, denoted $\mathcal M$, is expressed in terms of its variable coefficients $\alpha_{\bj}: E\to \IR$ for $\bj\in\IN_0^d$ and $\abs{\bj}\leq m$ as
	\begin{equation}\label{linearop}
	\calM:=\sum_{\bj\in\IN_0^d,\;|\bj|\leq m} \alpha_{\bj} D^{\bj}.
	\end{equation}
	The PDE of interest, for the unknown $u:E \to\IR$ and source term $f:E \to \IR$, then reads
	$\calM u=f$ in $E$.
	
	We introduce the polynomial quasi-Trefftz spaces for this PDE, under assumptions of smoothness of the right-hand side and the operator coefficients.
	\begin{Def}[Quasi-Trefftz space]\label{def:QT}
	Let $p\in\IN_0$, let $E \subset\IR^d$ be an open set and let $\bx^E\in E$.
	Assume that the coefficients $\alpha_\bj\in C^{\max\{p-m,0\}}(E)$ for all $\abs{\bj}\leq m$ and that $f\in C^{\max\{p-m,0\}}(E)$. 
	We define the \textit{polynomial quasi-Trefftz space} for the equation $\calM u=f$ in $E$ as
	\begin{equation}\label{inhomQT}
	\QT^p_f(E):=\big\{ v\in \IP^p(E) \mid D^{\bi} \calM v (\bx^E)=D^{\bi} f (\bx^E)\quad \forall \bi\in \IN^d_0,\ |\bi|\leq p-m\big\}.
	\end{equation}
	\end{Def}
	The choice of the maximal order $p-m$ for the derivatives of $\mathcal Mv-f$ that vanish at $\bx^E$ is optimal in the following sense: for a lower order the space would be larger but it would not have better approximation properties; for a higher order the space would not enjoy the same approximation properties; see \cite[Remark~4.4]{IGMS_MC_2021}.
	By definition, $\QT^p_f(E)$ is a subset of the full polynomial space  $\IP^p(E)$ and an affine space; when $f=0$, $\QT^p_0(E)$ is a vector space.
	For $p<m$, $\QT^p_f(E)$ coincides with $\IP^p(E)$, so we always assume $p\geq m$.
	
	\begin{rem}[Non nested spaces]
	In general,  $\QT^p_f(E)\not\subset\QT^{p+1}_f(E)$, i.e., for increasing polynomial degrees $p$, the quasi-Trefftz spaces are not nested.
	To see this, consider, for example, 
	the second-order diffusion--advection--reaction operator $\calM u:=-\Delta u+\bbeta\cdot\nabla u+\sigma u $ with $\bbeta(\bx)=(1,\dots,1)^\top$, $\sigma(\bx)=\frac{2}{x_1^2+1}$ and $f=0$.
	Choosing the point $\bx^E=\bzero$ and the function $v(\bx)=x_1^2+1\in  \IP^2(E)$,  then, $\calM v(\bx)=-2+2x_1+2$, so $\calM v(\bx^E)=0$.
	Hence $v\in \QT^2_0(E)$, but $\partial_{x_1}\calM v(\bx)=2$, implying that $v\in\QT^2_0(E)\setminus \QT^3_0(E)$.
	\end{rem}
	
	\begin{rem}[Constant-coefficients: Trefftz and quasi-Trefftz spaces]
	Let us consider a constant-coefficient differential operator $\calM$.
	When all the terms in \eqref{linearop} 
	are derivatives of the same order (i.e.\ $\alpha_\bj=0$ for $|\bj|<m$), such as, for example, in the Laplace and the wave equations, the polynomial Trefftz space 
	$\IT^p(E):=\{v\in \IP^p(E) \mid \calM v=0 \text{ in } E\}$
	approximates solutions of the homogeneous PDE $\calM u=0$ with the same orders of $h$-convergence as the full polynomial space $\IP^p(E)$ \cite[Lemma~1]{moiola2018space} (assuming $E$ is star-shaped).
	On the other hand, if the differential operator $\calM$ includes derivatives of different orders, the convergence rates for the polynomial Trefftz space can be lower.
	For example, for the linear time-dependent Schr\"{o}dinger equation, in \cite{gomez2023polynomial} the same
	rates are obtained for $\IP^p(E)$ and $\IT^{2p}(E)$, i.e.\ the Trefftz space requires doubling the polynomial degree.
	In the extreme case when a zero-order term is present, i.e.\ $\alpha_{\bzero}\ne0$, such as in the case of the Helmholtz equation, $\calM u=0$ does not admit polynomial solutions and the polynomial Trefftz space is trivial, $\IT^p(E)=\{0\}$.
	The quasi-Trefftz space, instead, is always rich enough to give the same approximation rates as the full $\IP^p(E)$, as we see below in \Cref{th:Approx}.
	This suggests that quasi-Trefftz methods could be an effective choice also for problems with piecewise-constant coefficients.
	\end{rem}
	
	\subsection{Approximation properties}\label{s:ApproxQT}
	For $q\in\IN_0$, the standard $C^q$ norms and seminorms are denoted by
	\begin{equation*}
	\N{v}_{C^0(E)}:=\sup_{\bx\in E}|v(\bx)|,\qquad
	\abs{v}_{C^q(E)}:=\max_{\bi\in\IN_0^{d},\; |\bi|=q}\N{D^\bi v}_{C^0(E)}.
	\end{equation*}
	Let $p\in\IN_0$ and let $\tayt^{p+1}_{\bx^E}[v]\in\IP^p(E)$ denote the \textit{Taylor polynomial} of order $p+1$ of $v\in C^p(E)$, centered at $\bx^E\in E$: 
	$$
	\tayt^{p+1}_{\bx^E}[v](\bx):=
	\sum_{|\bj|\le p}\frac1{\bj!}	D^\bj v(\bx^E)(\bx-\bx^E)^{\bj}.
	$$
	For every multi-index $\bi\in\IN_0^d$ with $|\bi|\le p$
	\begin{align}\nonumber
	D^\bi \tayt^{p+1}_{\bx^E}[v](\bx)=&
	\sum_{\substack{\abs{\bj}\leq p\\ \bj\ge \bi}}\frac{1}{\bj!}D^{\bj } v(\bx^E)\frac{\bj!}{(\bj-\bi)!}
	(\bx-\bx^E)^{\bj-\bi}
	=\sum_{\abs{\bk}\leq p-\abs{\bi}}\frac{1}{\bk!}D^{\bk+\bi } v(\bx^E)
	(\bx-\bx^E)^{\bk},
	\\\label{DiT=TDi}
	&\Longrightarrow \quad D^\bi \tayt^{p+1}_{\bx^E}[v](\bx)= \tayt^{p+1-\abs{\bi}}_{\bx^E}[D^{\bi}v](\bx).
	\end{align}
	From the evaluation of this identity at $\bx=\bx^E$ and $\tayt^{p+1}_{\bx^E}[v](\bx)\in\IP^p(E)$,
	it follows that
	\begin{equation}\label{DiT=Div}
	D^\bi \tayt^{p+1}_{\bx^E}[v](\bx^E)=
	\left\{
	\begin{array}{ll}
	D^\bi v(\bx^E) &\text{ if }|\bi|\le p,\\
	0 &\text{ if }|\bi|>p.
	\end{array}
	\right.
	\end{equation}
	Recall the Lagrange form of the Taylor remainder \cite[Cor.~3.19]{callahan2010advanced}: if $v\in C^{p+1}(E)$ and the segment $S$ with endpoints $\bx^E$ and $\bx$ is  contained in $E$, then exists $ \bx_*\in S$ such that
	\begin{equation}\label{eq:TaylorRem}
	v(\bx)-\tayt^{p+1}_{\bx^E}[v](\bx)=
	\sum_{|\bj|=p+1}\frac1{\bj!}D^\bj v(\bx_*)(\bx-\bx^E)^{\bj}.
	\end{equation}
	
	To prove the approximation properties of $\QT^p_f(E)$ and to construct quasi-Trefftz polynomials, we make the following regularity assumption on the PDE coefficients and right-hand side:
	\begin{equation}\label{eq:Cpm}
	p,m\in\IN, \qquad p\ge m, \qquad 
	\alpha_\bj\in C^{p-m}(E) \quad \text{for all }\bj\in\IN_0^d, \quad |\bj|\le m, \qquad 
	f\in C^{p-m}(E). 
	\end{equation}
	
	The following theorem provides the key approximation property of quasi-Trefftz spaces:  the orders of $h$-convergence for quasi-Trefftz spaces $\QT^p_f(E)$ are the same as those for full polynomial spaces $\IP^p(E)$ of the same degree.
	We denote the diameter of $E$ as $h_E:=\sup_{\bx,\by\in E}\abs{\bx-\by}$.

	\begin{theorem}\label{th:Approx}
	Under assumption~\eqref{eq:Cpm}, let
	$u\in C^{p+1}(E)$ satisfies $\calM u=f$ in $E$.
	Then, the Taylor polynomial $\mathsf{T}^{p+1}_{\bx^E}[u]\in \QT^p_f(E)$.
	
	Moreover, if $E$ is star-shaped with respect to $\bx^E$, then, for all $q\in\IN_0$ with 
	$q\le p$,
	\begin{equation}\label{eq:Approx}
	\inf_{v\in\QT^p_f(E)}\abs{u-v}_{C^q(E)}
	\ \le\ \abs{u-\tayt^{p+1}_{\bx^E}[u]}_{C^q(E)}
	\ \le\ \frac{d^{p+1-q}}{(p+1-q)!} h_E^{p+1-q} \abs{u}_{C^{p+1}(E)}.
	\end{equation}
	\end{theorem}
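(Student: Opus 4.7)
The plan splits naturally into two parts, mirroring the two statements in the theorem.

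For the first part (membership $\tayt^{p+1}_{\bx^E}[u]\in\QT^p_f(E)$), I would argue that the Taylor polynomial of $u$ agrees with $u$ on all derivatives up to order $p$ at $\bx^E$, and that $D^\bi\calM w(\bx^E)$ for $|\bi|\le p-m$ depends only on such derivatives. More precisely, for any multi-index $\bi$ with $|\bi|\leq p-m$, applying Leibniz's rule to each summand $\alpha_\bj D^\bj w$ of $\calM w$ gives
\[
D^\bi\calM w(\bx^E)=\sum_{|\bj|\le m}\sum_{\bk\le\bi}\binom{\bi}{\bk}D^{\bi-\bk}\alpha_\bj(\bx^E)\,D^{\bk+\bj}w(\bx^E),
\]
so only the derivatives $D^{\bl}w(\bx^E)$ with $|\bl|\le|\bi|+m\le p$ appear. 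By~\eqref{DiT=Div}, these coincide for $w=u$ and $w=\tayt^{p+1}_{\bx^E}[u]$, hence $D^\bi\calM(\tayt^{p+1}_{\bx^E}[u])(\bx^E)=D^\bi(\calM u)(\bx^E)=D^\bi f(\bx^E)$, which is exactly the defining condition~\eqref{inhomQT}. The smoothness assumption~\eqref{eq:Cpm} ensures all the partial derivatives of $\alpha_\bj$ and $f$ appearing here make sense.

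For the second part, the first inequality in~\eqref{eq:Approx} is immediate since $\tayt^{p+1}_{\bx^E}[u]$ belongs to $\QT^p_f(E)$. For the second inequality, I would fix a multi-index $\bi$ with $|\bi|=q$ and observe, using identity~\eqref{DiT=TDi}, that
\[
D^\bi\bigl(u-\tayt^{p+1}_{\bx^E}[u]\bigr)(\bx)=D^\bi u(\bx)-\tayt^{p+1-q}_{\bx^E}[D^\bi u](\bx).
\]
Star-shapedness of $E$ with respect to $\bx^E$ ensures that the segment from $\bx^E$ to any $\bx\in E$ lies in $E$, so the Lagrange Taylor remainder~\eqref{eq:TaylorRem} applied to $D^\bi u\in C^{p+1-q}(E)$ yields, for some $\bx_*$ on that segment,
\[
D^\bi u(\bx)-\tayt^{p+1-q}_{\bx^E}[D^\bi u](\bx)=\sum_{|\bk|=p+1-q}\frac{1}{\bk!}D^{\bk+\bi}u(\bx_*)(\bx-\bx^E)^\bk.
\]
Bounding $|D^{\bk+\bi}u(\bx_*)|\le|u|_{C^{p+1}(E)}$, $|(\bx-\bx^E)^\bk|\le h_E^{p+1-q}$, and using the multinomial identity $\sum_{|\bk|=p+1-q}\frac{1}{\bk!}=\frac{d^{p+1-q}}{(p+1-q)!}$ gives the stated bound after taking the supremum over $\bx\in E$ and the maximum over $|\bi|=q$.

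The whole argument is largely a bookkeeping exercise; the only subtle point is the interplay between the truncation orders in~\eqref{DiT=TDi}, which is what makes the Taylor remainder of the derivative $D^\bi u$ of order $p+1-q$ rather than $p+1$, thus yielding the sharper factor $h_E^{p+1-q}$. No step is a real obstacle, but the first part requires care in tracking which derivatives of $\alpha_\bj$, $f$, and $u$ are actually needed, to verify that assumption~\eqref{eq:Cpm} is sufficient.
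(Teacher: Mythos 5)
Your proposal is correct and follows essentially the same route as the paper: the membership part uses the Leibniz expansion of $D^\bi\calM$ at $\bx^E$ combined with the fact \eqref{DiT=Div} that the Taylor polynomial reproduces all derivatives of $u$ up to order $|\bi|+m\le p$, and the error bound uses \eqref{DiT=TDi}, the Lagrange remainder \eqref{eq:TaylorRem} (valid by star-shapedness), and the multinomial identity $\sum_{|\bk|=k}\frac{1}{\bk!}=\frac{d^k}{k!}$. No gaps.
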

	\begin{proof}
	First we prove that $\tayt^{p+1}_{\bx^E}[u]\in \QT^p_f(E)$.
	By definition, $\tayt^{p+1}_{\bx^E}[u]\in \IP^p(E)$.
	Moreover, from the definition \eqref{linearop} of $\calM$ and the Leibniz product rule,
	for all $v\in C^p(E)$ and $ |\bi|\le p-m$ 
	\begin{equation}\label{derimage}
	D^\bi \calM v(\bx^E)=\sum_{\abs{\bj}\leq m} D^\bi \left(\alpha_{\bj} (\bx^E)D^\bj v(\bx^E)\right)
	=\sum_{\abs{\bj}\leq m}\sum_{\br\leq \bi} \binom{\bi}{\br} D^\br \alpha_{\bj} (\bx^E)D^{\bi-\br+\bj}v(\bx^E).
	\end{equation}
	Hence, with $v= \mathsf{T}^{p+1}_{\bx^E}[u]$, we have for all $|\bi|\le p-m$
	\begin{align*}
	\begin{split}
	D^\bi \calM \mathsf{T}^{p+1}_{\bx^E}[u](\bx^E)&=\sum_{\abs{\bj}\leq m}\sum_{\br\leq \bi} \binom{\bi}{\br} D^\br \alpha_{\bj}(\bx^E) D^{\bi-\br+\bj} \mathsf{T}^{p+1}_{\bx^E}[u](\bx^E)\\
	&=\sum_{\abs{\bj}\leq m}\sum_{\br\leq \bi} \binom{\bi}{\br} D^\br \alpha_{\bj}(\bx^E) D^{\bi-\br+\bj} u(\bx^E)
	=D^\bi\calM u (\bx^E) = D^\bi f (\bx^E) . 
	\end{split}
	\end{align*}
	The second equality
	follows from the property \eqref{DiT=Div} with partial derivatives of order at most equal to $\abs{\bi}+m\leq p$, while the third one is \eqref{derimage} again  with $v=u\in C^{p+1}(E)$.
	In the last step we use that $u$ is solution of $\calM u=f$ in $E$.
	This shows that the Taylor polynomial $\mathsf{T}^{p+1}_{\bx^E}[u]$ belongs to the quasi-Trefftz space $\QT^p_f(E)$.
	
	This immediately implies the first inequality in the best-approximation bound \eqref{eq:Approx}.
	To prove the second inequality, fix $q$ with $0\le q\le p$.
	Using the $\abs{ \cdot }_{C^q}$-seminorm definition, 
	the identity $D^\bi \tayt^{p+1}_{\bx^E}[u]=\tayt^{p+1-|\bi|}_{\bx^E}[D^\bi u]$ for $|\bi|=q\le p$ from \eqref{DiT=TDi}, 
	estimating the Lagrange form of the Taylor remainder \eqref{eq:TaylorRem}, which is applicable because $u\in C^{p+1}(E)$ and $E$ is star-shaped with respect to $\bx^E$,
	we obtain the assertion:
	\begin{align*}
	\abs{u-\tayt^{p+1}_{\bx^E}[u]}_{C^q(E)}
	&=\max_{\bi\in\IN_0^{d},\; |\bi|=q}\N{D^\bi (u-\mathsf{T}^{p+1}_{\bx^E}[u])}_{C^0(E)}
	\\
	&=\max_{\bi\in\IN_0^{d},\; |\bi|=q}\N{D^\bi u-\mathsf{T}^{p+1-q}_{\bx^E}[D^\bi u]}_{C^0(E)}
	\\
	&	\le \max_{\bi\in\IN_0^{d},\; |\bi|=q}
	\sum_{\substack{|\bj|=p+1-q}}
	\frac1{\bj!}\sup_{\bx,\bx_*\in E}\abs{	D^{\bi+\bj}u(\bx_*)(\bx-\bx^E)^\bj}
	\\&
	\le \frac{d^{p+1-q}}{(p+1-q)!} h_E^{p+1-q} \abs{u}_{C^{p+1}(E)}.
	\end{align*} 	
	We have used the formula 
	$\sum_{|\bj|=k}\frac1{\bj!}= \frac{d^k}{k!}$ with $k=p+1-q$, 
	obtained from the multinomial theorem 
	$(w_1+\dots+w_d)^k=\sum_{\bj\in\IN_0^d, \abs{\bj}=k} \frac{k!}{\bj!}\bw^{\bj}$
	by choosing $\bw=(1,\dots,1)$.
	\end{proof}
	
	Bound~\eqref{eq:Approx} is an $h$-approximation estimate: it ensures convergence of the approximation error to zero when the size of the domain $E$ decreases.
	For analytic functions whose seminorm sequence $p\mapsto|u|_{C^p(E)}$ increases at most exponentially, it ensures also $p$-convergence, namely convergence on a fixed $E$ when $p\to\infty$.
	
	\subsection{Construction of quasi-Trefftz functions}\label{s:ConstructionQT}
	Under assumption \eqref{eq:Cpm}, this section proposes an explicit procedure to construct quasi-Trefftz functions under a further non-degeneracy assumption on the differential operator $\calM$, namely that 
	\begin{equation}\label{hpcoeff}
	\alpha_{\bj^*}(\bx^E)\ne0 \ \text{ for } \bj^*=(m,0,\dots,0)=m\be_1.
	\end{equation}
	Here we denote by $\be_k\in\IR^d$ the elements of the canonical basis of~$\IR^d$, defined by $(\be_k)_l=\delta_{kl}$, $1\le k,l\le d$.
	Assuming instead that $\bj^*=m\be_k$ for any $k$ between $2$ and $d$ would allow for the same reasoning.
	Condition \eqref{hpcoeff} might be circumvented with a more algebraic approach to the construction of quasi-Trefftz functions, which is currently under development.
	
	Constructing a polynomial $v\in\QT^p_f(E)$ boils down to computing 
	the coefficients $\{a_{\bk},\bk\in \IN_0^d,\abs{\bk}\le p\}$ of its expansion as a linear combination of scaled monomials centered at $\bx^E\in E$:
	\begin{equation}\label{lincombmon}
	v(\bx)=\sum_{\bk\in\IN_0^{d},|\bk|\leq p} a_\bk \left(\frac{\bx-\bx^E}{h_E}\right)^{\bk},\ 
	\text{ from which }D^{\bk} v(\bx^E)=\frac{\bk! }{h_E^{\abs{\bk}}}a_{\bk}.
	\end{equation}
	In order to state
	the conditions $D^{\bi}\mathcal{M}v(\bx^E)=D^{\bi}f(\bx^E)$ for $\abs{\bi}\leq p-m$
	in terms of the coefficients $a_\bk$, we note from \eqref{derimage} that 
	\begin{equation*}
	\begin{aligned}
	D^\bi \calM v(\bx^E)
	&=\sum_{\abs{\bj}\leq m}\sum_{\br \leq \bi} \binom{\bi}{\br} D^{\br} \alpha_{\bj} (\bx^E) 
	\frac{({\bi-\br+\bj})! }{h_E^{\abs{{\bi-\br+\bj}}}}a_{{\bi-\br+\bj}}\\&
	=	\sum_{\abs{\bj}\leq m}\sum_{\bl \leq \bi} \binom{\bi}{\bi-\bl} D^{\bi-\bl} \alpha_{\bj} (\bx^E) 
	\frac{({\bl+\bj})! }{h_E^{\abs{{\bl+\bj}}}}a_{{\bl+\bj}}
	\qquad\qquad |\bi|\le p-m.
	\end{aligned}
	\end{equation*}
	Under assumption \eqref{hpcoeff}, each of these conditions for $\abs{\bi}\leq p-m$ can be equivalently stated as
	\begin{align}\label{recursiveformula}
	a_{\bi+m\be_1}=\frac{{h_E^{\abs{\bi}+m}}}{\alpha_{m\be_1}(\bx^E)(\bi+m\be_{1})!}
	\Bigg( D^{\bi}	f(\bx^E)-\!\!\!\sum_{{\substack{\abs{\bj}\leq m\\ \bl\leq \bi\\ (\bj,\bl)\neq(m\be_1,\bi) }}}\!\!\! \binom{\bi}{\bi-\bl}  D^{\bi-\bl} \alpha_{\bj} (\bx^E) 
	\frac{({\bl+\bj})! }{h_E^{\abs{{\bl+\bj}}}} a_{{\bl+\bj}}\Bigg),
	\end{align}
	dividing by $\alpha_{m\be_1}(\bx^E)\ne0$.
	Imposing \eqref{recursiveformula} following an order such that, at each step, all the $a_{\bl+\bj}$ appearing at the right-hand side are known, would provide an iterative formula to compute all the $a_{\bk}$ such that $k_1\geq m$. 
	
	Accordingly, we propose to start by fixing all the coefficients  $a_{\bk}=\frac{h_E^{\abs{\bk}}}{\bk!}D^\bk v(\bx^E)$ such that $k_1< m$. 
	This is  equivalent to choosing $m$ polynomials $\psi_r\in\IP^{p-r}(\IR)$ for $r=0,\dots,m-1$ such that $\partial_{x_1}^r v(x_1^E,\cdot)= \psi_r$, where $v(x^E_1,\cdot)$ denotes the restriction of $ v$ to the hyperplane $\{x_1=x^E_1\}$. 
	We call this set of functions $\{\psi_r,0\leq r<m\}$ the ``Cauchy data'' of $v$  in analogy to the case of the wave equation \cite{IGMS_MC_2021} (with $m=2$ and $x_1$ corresponding to the time variable). This step is referred to as the \textit{initialization}.
	
	Next, given the Cauchy data of $v$, we propose the following precise ordering of the multi-indices~$\bi$ via three nested loops to compute \textit{iteratively} the coefficients $a_{\bi+m\be_1}$ in \eqref{recursiveformula}: 
	\begin{itemize}
	\item we start by looping over the length $q=\abs{\bi}$ increasingly from $q=0$ to $q=p-m$;
	\item at fixed $q$, we loop over the first component $i_1$ of $\bi$, from $i_1=0$ to $i_1=q$;
	\item at fixed $q$ and $i_1$, we compute $a_{\bi+m\be_1}$ for the indices $(i_2,\dots,i_d)$ such that 
	$i_2+\cdots+i_d=q-i_1$, 
	in any arbitrary order.
	\end{itemize} 
	
	\Cref{Algo:general} summarizes the procedure comprised of the initialization and the iterative step. Does it fulfill the goal of constructing a quasi-Trefftz function? It does, as for fixed $q$ and $i_1$ all the  coefficients $a_{\bl+\bj}$ appearing in the right-hand side of \eqref{recursiveformula} are already known: 
	(1) for $\ell_1+j_1<m$ they are fixed from the initialization;
	(2) for $\ell_1+j_1\geq m$ and $|{\bl}+\bj|< |\bi|+m$ they are computed at a previous iteration of the outer loop for $q'<q$;
	(3)  for $\ell_1+j_1\geq m$, $|{\bl}+\bj|=|\bi|+m$  and $(\bj,\bl)\neq(m\be_1,\bi)$ they are computed at the same iteration of the outer loop, 
	but at a previous iteration of the second loop for $i'_1 =\ell_1+j_1<i_1+m $. 
	
	All the information about the PDE required by \Cref{Algo:general} is encoded in the values at $\bx^E$ of the partial derivatives of order up to $p-m$ of the coefficients $\alpha_\bj$, and of the right-hand side $f$.
	
	\begin{algorithm}[ht]
	{\sc Algorithm}\\
	\SetAlgoLined
	Data: $p(\geq m)$, $\bx^E$, $h_E$,  $D^{\bl}f(\bx^E)$ for  $\abs{\bl}\leq p-m$,  $D^{\bl}\alpha_{\bj}(\bx^E)$ for  $\abs{\bl}\leq p-m$ and $\abs{\bj}\leq m$.\\
	Fix coefficients $a_{r,k_2,\dots,k_d}$ by choosing polynomials $\psi_r\in \IP^{p-r}(\mathbb{R}^{d-1})$, for $r=0,\dots,m-1$.\\
	Construct $v\in\QT^p_f(E)$ as follows:\\
	\For{$q=0$ to $p-m$\qquad (loop across $\{\abs{\bi}=q\}$ hyperplanes $\nearrow$)}{
	\For{$ i _1=0$ to $q$ \qquad (loop across constant-$i_1$ hyperplanes $\rightarrow$)}{
	\For{ $(i_2,\dots,i_d)$ with $\abs{(i_2,\dots,i_d)}=q- i _1$\qquad
	}{
	\begin{align*}
	a_{\bi+m\be_1}=&\ \frac{{h_E^{q+m}}}{\alpha_{m\be_1}(\bx^E)(\bi+m\be_{1})!}\times \\&
	\Biggl( D^{\bi}	f(\bx^E)-\!\!\!\sum_{{\substack{\abs{\bj}\leq m\\ \bl\leq \bi\\ (\bj,\bl)\neq(m\be_1,\bi) }}}\!\!\! \binom{\bi}{\bi-\bl}  D^{\bi-\bl} \alpha_{\bj} (\bx^E) 
	\frac{({\bl+\bj})! }{h_E^{\abs{{\bl+\bj}}}} a_{{\bl+\bj}}\Biggl).
	\end{align*}
	}}}
	$\displaystyle v(\bx)=\sum_{\bk\in\IN_0^{d},|\bk|\leq p} a_\bk \bigg(\frac{\bx-\bx^E}{h_E}\bigg)^{\bk}$.
	\caption{The algorithm for the computation of the monomial expansion of any quasi-Trefftz polynomial $v\in\QT^p_f(E)$ given its Cauchy data $(\psi_r)_{r=0,\ldots,m-1}$.
	}
	\label{Algo:general}
	\end{algorithm}
	
	We will see in \cref{s:QTDGdiscretizazion} that, in order to treat non-homogeneous BVPs, we need to construct an elementwise approximate particular solution, i.e.\ an element of $\QT_f^p(E)$ for each mesh element $E$.
	To this purpose, it is sufficient to choose any Cauchy data $(\psi_r)_{r=0,\ldots,m-1}$ and apply \Cref{Algo:general}.
	In practice we will choose $\psi_r=0$ for all $r$.
	
 	Next we turn to the question of the uniqueness of the quasi-Trefftz polynomial with given Cauchy data. 
	
	\begin{prop}\label{prop:Uniqueness}
	Assume that the regularity and the non-degeneracy conditions \eqref{eq:Cpm} and \eqref{hpcoeff} are satisfied for an open, connected set $E\subset\IR^d$,
	and let $\bx^E\in E$. 
	Given any set of $m$ polynomials $\psi_r\in \IP^{p-r}(\mathbb{R}^{d-1})$ for $r=0,\dots,m-1$, there exists a unique $v\in\QT^p_f(E)$ such that $\partial_{x_1}^r v(x_1^E,\cdot)= \psi_r$ for $r=0,\dots,m-1$.
	\end{prop}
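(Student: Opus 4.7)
My plan is to split the statement into existence and uniqueness, with existence already essentially produced by the construction preceding the proposition.

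\textbf{Existence.} This is a direct consequence of \Cref{Algo:general}. Given the Cauchy data $(\psi_r)_{r=0,\dots,m-1}$, the initialization step fixes all coefficients $a_\bk$ with $k_1<m$: indeed, for each fixed $r<m$, the condition $\partial_{x_1}^r v(x_1^E,\cdot)=\psi_r$ is a polynomial identity in $\IP^{p-r}(\IR^{d-1})$, and matching Taylor coefficients at $(x_2^E,\dots,x_d^E)$ (up to combinatorial factors of the form $\bk!/h_E^{|\bk|}$) determines uniquely each $a_{(r,k_2,\dots,k_d)}$ from $\psi_r$. The three nested loops of the algorithm then define the remaining coefficients via \eqref{recursiveformula}, which is well-posed thanks to the non-degeneracy assumption \eqref{hpcoeff}. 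The paragraph following the algorithm already explains that the chosen ordering guarantees that every coefficient appearing on the right-hand side of \eqref{recursiveformula} has been computed before being needed, and that \eqref{recursiveformula} is equivalent to the quasi-Trefftz condition $D^\bi\calM v(\bx^E)=D^\bi f(\bx^E)$ for $|\bi|\le p-m$. Hence the output $v$ lies in $\QT^p_f(E)$ and satisfies $\partial_{x_1}^r v(x_1^E,\cdot)=\psi_r$ by construction.

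\textbf{Uniqueness.} Here I would exploit the linearity of $\calM$. Let $v_1,v_2\in\QT^p_f(E)$ share the same Cauchy data and set $w:=v_1-v_2\in\IP^p(E)$. By linearity $w\in\QT^p_0(E)$ and its Cauchy data vanish identically. Expanding $w(\bx)=\sum_{|\bk|\le p}b_\bk\bigl(\frac{\bx-\bx^E}{h_E}\bigr)^{\bk}$, the Cauchy data observation used in the existence part gives $b_\bk=0$ for every $\bk$ with $k_1<m$. Writing \eqref{recursiveformula} with $f\equiv 0$ for $w$ then expresses each $b_{\bi+m\be_1}$ as a linear combination of coefficients $b_{\bl+\bj}$ that, by the case analysis (1)--(3) already carried out for the algorithm, either have $\ell_1+j_1<m$ (hence vanish by the initialization) or were handled at a strictly earlier step of the triple loop. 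A straightforward induction along the lexicographic order $(q,i_1)$ of the outer two loops then forces $b_\bk=0$ for every $\bk$ with $k_1\ge m$, so $w\equiv 0$ and $v_1=v_2$.

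The proof is essentially bookkeeping: the only mildly delicate point is the equivalence between the Cauchy data condition and fixing the $a_\bk$ with $k_1<m$, which I would state as a short linear-algebra lemma (or fold into the argument as above). No genuine analytical obstacle is expected, since both existence and uniqueness reduce to the same triangular structure of the recursion \eqref{recursiveformula} that validates \Cref{Algo:general}; connectedness of $E$ in the hypothesis is only needed so that the Cauchy data uniquely determine the relevant restriction, and plays no further role once one works with the scaled monomial coefficients at $\bx^E$.
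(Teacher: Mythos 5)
Your proposal is correct and follows essentially the same route as the paper: existence via the initialization-plus-triple-loop of \Cref{Algo:general} (with the Cauchy data fixing exactly the coefficients $a_\bk$ with $k_1<m$), and uniqueness from the triangular structure of the recursion \eqref{recursiveformula}. The only cosmetic difference is that you phrase uniqueness by passing to the difference $w=v_1-v_2\in\QT^p_0(E)$ and inducting to show its coefficients vanish, whereas the paper argues directly that each $a_{\bi+m\be_1}$ of any quasi-Trefftz polynomial is determined by earlier coefficients; the two formulations are equivalent.
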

	\begin{proof}
	Any polynomial $v\in\QT^p_f(E)$ is uniquely determined by the sets of its coefficients  
	$\{a_\bk=\frac{h_E^{\abs{\bk}}}{\bk!}D^\bk v(\bx^E), \bk\in\mathbb N_0^d, |\bk|\le p\}$ as in \eqref{lincombmon}. 
	The corresponding index set can be split  as
	$$\left\{\bk\in\mathbb N_0^d,\; |\bk|\le p\right\}  
	= \left\{ \bk\in\mathbb N_0^d,\; |\bk|\le p,\; k_1<m\right\} \cup 
	\left\{\bk\in\mathbb N_0^d,\; |\bk|\le p,\; k_1\geq m\right\} .$$
	On the one hand, the first set of coefficients $a_\bk$ is uniquely defined by imposing $\partial_{x_1}^r v(x_1^E,\cdot)= \psi_r$ for $r=0,\dots,m-1$ since then $D^\bk v(\bx^E)=D^{(k_2,\dots,k_d)} \psi_{k_1}(x_2^E,\dots,x_d^E)$ for all $\bk$ with $k_1<m$.
	On the other hand, Algorithm \ref{Algo:general} shows that it is possible to compute the coefficients $a_\bk$ for all the indices $\bk$ in the second set, with $k_1\ge m$, thus there exists a $v\in\QT^p_f(E)$ as desired.
	This is unique because the coefficients of any quasi-Trefftz $v$ must satisfy equation~\eqref{recursiveformula}, thus each of those in the form $a_{\bi+m\be_1}$ are determined by the $a_\bk$ that appear earlier in the ordering given by the nested loops of \Cref{Algo:general}.
	In particular, if $v_1,v_2\in\QT_f^p(E)$ share the same first set of coefficients, then they coincide.
	\end{proof}
	The lowest-dimensional cases are ideal for a visual representation of the iterated loops in \Cref{Algo:general}.
	In the 1D case ($d=1$), only the outermost loop over $q=i$ is present, and the algorithm reduces to the sequential computation of $a_{i+m}$ from $i=0$ to $i=p-m$.
	In the 2D case ($d=2$), the innermost loop degenerates to the computation of the single coefficient $a_{i_1+m,q-i_1}$.
	\Cref{fig:IndexTriangle} illustrates the dependence between the coefficients $a_{\bk}$ of the monomial expansion of $v\in\QT^p_f(E)$ and their ordering as they are computed in \Cref{Algo:general} for $d=2$, $m=2$ and $p=6$.
	The dots in the quarter-plane of multi-indices $\bk=(k_1,k_2)\in\mathbb N_0^2$  represent the coefficients $a_{k_1,k_2}$.
	Under the constraint that $k_1+ k_2\le p$, these dots form a triangular shape in the plane.
	To initialize the algorithm we choose the Cauchy data, which consists of two functions $\psi_0\in \IP^p(\mathbb{R}^{d-1})$ and $\psi_1\in \IP^{p-1}(\mathbb{R}^{d-1})$ such that  $ v(x^E_1,\cdot)= \psi_0$ and  $	\partial_{x_1} v(x^E_1,\cdot)= \psi_1$.
	This choice determines the coefficients $a_{0, k _2}$ with  $ 0\leq k _2\le p$ and $a_{1, k _2}$ with  $0\leq  k _2\le p-1$, represented by the shaded yellow area in the figure.
	All the other coefficients are then uniquely determined and can be computed in the iterative part of the algorithm using relation \eqref{recursiveformula}.
	See \cite[Fig.~5.4]{perinati2023quasitrefftz} for a similar figure with $d=3$.
	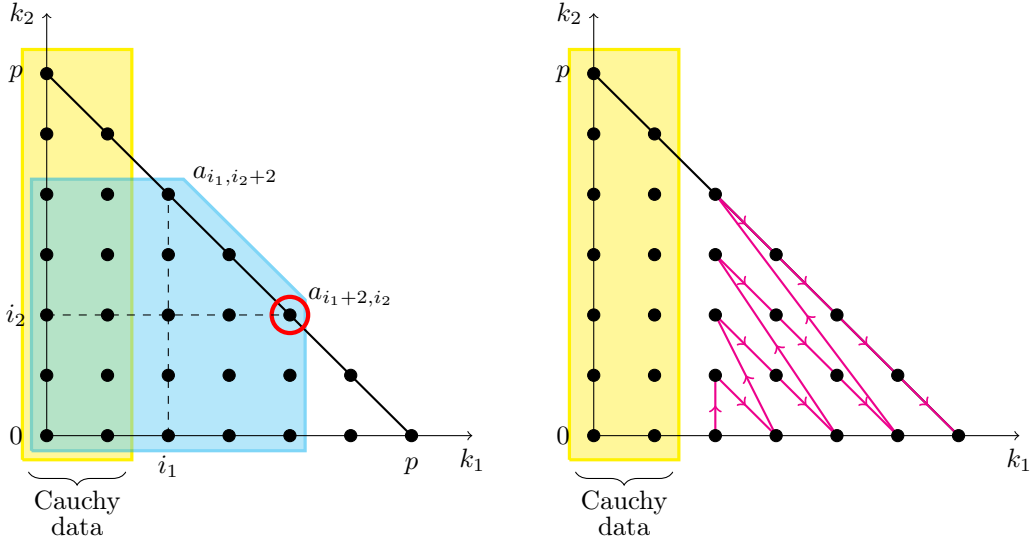
\begin{figure}[htb]\centering
	\newcommand{\YellInc}{.4}
	\newcommand{\BlueInc}{.25}
	\begin{tikzpicture}[scale=.8]
	\draw[yellow,very thick,fill=yellow!50!white](-\YellInc,-\YellInc)--(-\YellInc,6\YellInc)--(1\YellInc,6\YellInc)--(1\YellInc,-\YellInc)--(-\YellInc,-\YellInc);
	\draw[cyan,very thick,fill=cyan!50!white,opacity=0.5](-\BlueInc,-\BlueInc)--(-\BlueInc,4\BlueInc)--(2\BlueInc,4\BlueInc)--(4\BlueInc,2\BlueInc)--(4\BlueInc,-0\BlueInc)--(-\BlueInc,-\BlueInc);
	\draw[->](0,0)--(7,0); 
	\draw[->](0,0)--(0,7); 
	\draw[thick](0,6)--(6,0);
	\draw(-.5,0)node{$0$};
	\draw(-.5,6)node{$p$};
	\draw(6,-0.5)node{$p$};
	\draw[fill](0,0)circle(.1);
	\draw[fill](1,0)circle(.1);
	\draw[fill](2,0)circle(.1);
	\draw[fill](3,0)circle(.1);
	\draw[fill](4,0)circle(.1);
	\draw[fill](5,0)circle(.1);
	\draw[fill](6,0)circle(.1);
	\draw[fill](0,1)circle(.1);
	\draw[fill](1,1)circle(.1);
	\draw[fill](2,1)circle(.1);
	\draw[fill](3,1)circle(.1);
	\draw[fill](4,1)circle(.1);
	\draw[fill](5,1)circle(.1);
	\draw[fill](0,2)circle(.1);
	\draw[fill](1,2)circle(.1);
	\draw[fill](2,2)circle(.1);
	\draw[fill](3,2)circle(.1);
	\draw[fill](4,2)circle(.1);
	\draw[ultra thick,red](4,2)circle(.3);
	\draw(5,2.3)node{$a_{ i _1+2, i _2}$};
	\draw[fill](0,3)circle(.1);
	\draw[fill](1,3)circle(.1);
	\draw[fill](2,3)circle(.1);
	\draw[fill](3,3)circle(.1);
	\draw[fill](0,4)circle(.1);
	\draw[fill](1,4)circle(.1);
	\draw[fill](2,4)circle(.1);
	\draw(3.1,4.3)node{$a_{ i _1, i _2+2}$};
	\draw[fill](0,5)circle(.1);
	\draw[fill](1,5)circle(.1);
	\draw[fill](0,6)circle(.1);
	\draw(7,-.4)node{$ k _1$};
	\draw(-.4,7)node{$ k _2$};
	\draw[decorate, decoration={brace,amplitude=5}] (1.3,-.6) -- (-0.3,-.6);
	\draw(0.5,.-1.1)node{Cauchy};
	\draw(0.5,.-1.5)node{data};
	\draw[dashed](2,0)--(2,4); \draw(2,-.5)node{$ i _1$};
	\draw[dashed](0,2)--(4,2); \draw(-.5,2)node{$ i _2$};
	\end{tikzpicture}\qquad
	\begin{tikzpicture}[scale=0.8]
	\draw[yellow,very thick,fill=yellow!50!white](-\YellInc,-\YellInc)--(-\YellInc,6\YellInc)--(1\YellInc,6\YellInc)--(1\YellInc,-\YellInc)--(-\YellInc,-\YellInc);
	\draw[->](0,0)--(7,0); 
	\draw[->](0,0)--(0,7); 
	\draw[thick](0,6)--(6,0);
	\draw(-.5,6)node{$p$};
	\draw(-.5,0)node{$0$};
	\draw[middlearrow={>},thick,magenta] (2,0) to (2,1);
	\draw[middlearrow={>},thick,magenta] (2,1) to (3,0);
	\draw[middlearrow={>},thick,magenta] (3,0) to (2,2);
	\draw[middlearrow={>},thick,magenta] (2,2) to (3,1);
	\draw[middlearrow={>},thick,magenta] (3,1) to (4,0);
	\draw[middlearrow={>},thick,magenta] (4,0) to (2,3);
	\draw[middlearrow={>},thick,magenta] (2,3) to (3,2);
	\draw[middlearrow={>},thick,magenta] (3,2) to (4,1);
	\draw[middlearrow={>},thick,magenta] (4,1) to (5,0);
	\draw[middlearrow={>},thick,magenta] (5,0) to (2,4);
	\draw[middlearrow={>},thick,magenta] (2,4) to (3,3);
	\draw[middlearrow={>},thick,magenta] (3,3) to (4,2);
	\draw[middlearrow={>},thick,magenta] (4,2) to (5,1);
	\draw[middlearrow={>},thick,magenta] (5,1) to (6,0);
	\draw[fill](0,0)circle(.1);
	\draw[fill](1,0)circle(.1);
	\draw[fill](2,0)circle(.1);
	\draw[fill](3,0)circle(.1);
	\draw[fill](4,0)circle(.1);
	\draw[fill](5,0)circle(.1);
	\draw[fill](6,0)circle(.1);
	\draw[fill](0,1)circle(.1);
	\draw[fill](1,1)circle(.1);
	\draw[fill](2,1)circle(.1);
	\draw[fill](3,1)circle(.1);
	\draw[fill](4,1)circle(.1);
	\draw[fill](5,1)circle(.1);
	\draw[fill](0,2)circle(.1);
	\draw[fill](1,2)circle(.1);
	\draw[fill](2,2)circle(.1);
	\draw[fill](3,2)circle(.1);
	\draw[fill](4,2)circle(.1);
	\draw[fill](0,3)circle(.1);
	\draw[fill](1,3)circle(.1);
	\draw[fill](2,3)circle(.1);
	\draw[fill](3,3)circle(.1);
	\draw[fill](0,4)circle(.1);
	\draw[fill](1,4)circle(.1);
	\draw[fill](2,4)circle(.1);
	\draw[fill](0,5)circle(.1);
	\draw[fill](1,5)circle(.1);
	\draw[fill](0,6)circle(.1);
	\draw[decorate, decoration={brace,amplitude=5}] (1.3,-.6) -- (-0.3,-.6);
	\draw(0.5,.-1.1)node{Cauchy};
	\draw(0.5,.-1.5)node{data};
	\draw(7,-.4)node{$k_1$};
	\draw(-.4,7)node{$k_2$};
	\end{tikzpicture}
	\caption{Indices $\bk$ in the $(k_1,k_2)$-plane in the case $d=2$, $m=2$ and $p=6$.
	Each black dot $\bullet$ corresponds to the coefficient $a_{k _1, k _2}$ of the monomial expansion \eqref{lincombmon} of $v$.
	The indices $\bk$ with $ k _1\in\{0,1\}$ are highlighted in the shaded yellow area; the corresponding coefficients are determined by the Cauchy data $\psi_0,\psi_1$ of $v$.
	Left panel: the indices highlighted in the shaded blue area correspond to 
	the coefficients appearing in formula \eqref{recursiveformula} for computing $a_{\bi+2\be_1}$, with $\bi=(2,2)$, identified by the dot surrounded by the red circle \protect\tikz[red,ultra thick]\protect\draw(0,0) circle[radius=.14];.
	Right panel: illustration of the index ordering in Algorithm \ref{Algo:general}. 
	All coefficients with indices located in the non-shaded region are computed with formula \eqref{recursiveformula} 
	in a double loop: first across diagonals $\nearrow$, and then along each diagonal $\searrow$. The ordering is shown by the magenta arrows 
	\raisebox{1mm}{\protect\tikz[magenta,thick,->]\protect\draw(0,0)--(.4,0);}.}
	\label{fig:IndexTriangle}
	\end{figure}

\begin{rem}[Computational cost]
The main contribution to the computational cost of Algorithm~\ref{Algo:general} is due to the computation and the evaluation of the partial derivatives of the PDE coefficients $\alpha_\bj$, and of the source term $f$.
If these functions are analytic in the whole computational domain, it is possible to compute their derivatives symbolically only once, and then evaluate them elementwise, possibly in parallel.
In practice, the cost strongly depends on the format in which these data are available and on the implementation.
The total number of derivatives 
needed is bounded above by $\mathcal{O}((p-m+d)^{d} (m+d)^d)$.
The total number of operations required by the three loops of  Algorithm~\ref{Algo:general} is less than $\mathcal{O}((p-m+1)^{d+2} (p-m+d)^{d-2}(m+d)^d)$.
\end{rem}
	
	\subsection{Construction of a basis for the homogeneous equation}\label{s:BasisQT}
	In this section, we define, for any $p\in\IN $, a basis for the quasi-Trefftz space $\QT^p_0(E)$ for the homogeneous equation $\calM u=0$ and use \Cref{Algo:general} to explicitly construct it.
	We denote
	$$S_{d,p}:=\mathrm{dim}\big(\IP^p\big(\mathbb{R}^d)\big)=\binom{p+d}{d} \quad\text{ and }\quad 
	I_{d,p,m}:=\left\{(r,s)\in \IN_0^2\;
	\begin{tabular}{|l}
	$0\leq r\leq m-1,$\\
	$1\leq	s\leq S_{d-1,p-r}$
	\end{tabular}
	\right\}.$$
	In order to define a set of quasi-Trefftz functions, we first choose $m$ polynomial bases:
	\begin{equation*}
	\left\{\psi_{(r,s)}\right\}_{(r,s)\in I_{d,p,m}}
	\text{such that, }\forall r\in\{0,\ldots,m-1\}, \;
	\{\psi_{(r,s)}\}_{s=1,\ldots,S_{d-1,p-r}} \text{ is a}
	\text{ basis for }\IP^{p-r}(\mathbb R^{d-1}).
	\end{equation*}
	Their total cardinality is 
	\begin{equation}\label{eq:Ndp}
	N_{d,p}:=card(I_{d,p,m})=S_{d-1,p}+\dots+S_{d-1,p-m+1}=\binom{p+d-1}{d-1}+\dots+\binom{p+d-m}{d-1}.
	\end{equation}
	We then define the following set of $N_{d,p}$ elements of $\QT^p_0(E)$:
	\begin{equation}\label{eq:BEp}
	\mathcal{B}_E^p:=\left\{b_{(r,s)}\in\QT^p_0(E) \;
	\begin{tabular}{|l}
	$	\partial_{x_1}^{r}b_{(r,s)}(x^E_1,\cdot)=	\psi_{(r,s)}, $\\[0.2cm]
	$	\partial_{x_1}^{r'}b_{(r,s)}(x^E_1,\cdot)=0  \text{ for } r'=0,\dots, m-1, r'\neq r$
	\end{tabular}
	\right\}_{(r,s)\in I_{d,p,m}}.
	\end{equation}
	Equivalently, for each $(r,s)\in I_{d,p,m}$, the element $b_{(r,s)}$ is a polynomial of degree at most $p$ satisfying the quasi-Trefftz property and with prescribed Cauchy data:
	\begin{align*}
	\begin{cases}
	D^\bi  \calM b_{(r,s)}(\bx^E)=0 
	&\quad  \bi \in\IN_0^{d},\; |\bi|\le p-m,\\
	\partial_{x_1}^{r}b_{(r,s)}(x^E_1,\cdot)=	\psi_{(r,s)} \\
	\partial_{x_1}^{r'}b_{(r,s)}(x^E_1,\cdot)=0 &\quad r'=0,\dots, m-1,\; r'\neq r.
	\end{cases}
	\end{align*}
	Next we show that the set $\mathcal B_E^p$ of the elements $b_{(r,s)}$ for all $(r,s)\in I_{d,p,m}$ forms a basis of $\QT^p_0(E)$.
	\begin{prop}\label{prop:Basis}
	Assume that the regularity and non-degeneracy conditions \eqref{eq:Cpm} and \eqref{hpcoeff} are satisfied for an open, connected set $E\subset\IR^d$,
	and let $\bx^E\in E$. 
	Let $\{\psi_{(r,s)}\}_{s=1,\ldots,S_{d-1,p-r}}$ be a basis of $\IP^{p-r}(\mathbb R^{d-1})$ for each $r\in\{0,\ldots,m-1\}$.
	Then the set $\mathcal{B}^p_E$ in \eqref{eq:BEp} is a basis of the space $\QT^p_0(E)$.
	\end{prop}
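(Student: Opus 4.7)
The strategy is to use \Cref{prop:Uniqueness} (with $f=0$) as the main engine: it says that a quasi-Trefftz polynomial is entirely determined by, and can be freely prescribed through, its Cauchy data $(\partial_{x_1}^r v(x_1^E,\cdot))_{r=0,\ldots,m-1}$. This gives a linear bijection between $\QT^p_0(E)$ and the direct sum $\bigoplus_{r=0}^{m-1} \IP^{p-r}(\IR^{d-1})$; the family $\mathcal{B}^p_E$ is precisely the preimage, under this bijection, of the natural basis obtained by juxtaposing the bases $\{\psi_{(r,s)}\}_s$, so it must itself be a basis. I would spell this out by proving linear independence and spanning separately.

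For linear independence, assume $\sum_{(r,s)\in I_{d,p,m}} c_{(r,s)} b_{(r,s)} = 0$ in $\QT^p_0(E)$. Fix any $r^*\in\{0,\ldots,m-1\}$, apply $\partial_{x_1}^{r^*}$ to both sides, and restrict to the hyperplane $\{x_1=x_1^E\}$. By the defining property of $b_{(r,s)}$ in \eqref{eq:BEp}, the terms with $r\neq r^*$ vanish while the terms with $r=r^*$ contribute $\psi_{(r^*,s)}$, so that $\sum_s c_{(r^*,s)} \psi_{(r^*,s)} = 0$. Since $\{\psi_{(r^*,s)}\}_s$ is a basis of $\IP^{p-r^*}(\IR^{d-1})$, all $c_{(r^*,s)}=0$; letting $r^*$ vary concludes this step.

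For spanning, given $v\in\QT^p_0(E)$, I would first check that each Cauchy datum $\partial_{x_1}^r v(x_1^E,\cdot)$ lies in $\IP^{p-r}(\IR^{d-1})$: expanding $v$ in monomials centered at $\bx^E$ as in \eqref{lincombmon}, the restriction to $\{x_1=x_1^E\}$ keeps only monomials with $k_1=r$, whose $(x_2,\ldots,x_d)$-degree is at most $p-r$. Hence we may expand $\partial_{x_1}^r v(x_1^E,\cdot) = \sum_s c_{(r,s)} \psi_{(r,s)}$ in the chosen basis and define $w := \sum_{(r,s)\in I_{d,p,m}} c_{(r,s)} b_{(r,s)} \in \QT^p_0(E)$. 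By construction, $w$ has the same Cauchy data as $v$, and $\calM w = \calM v = 0$ only in the quasi-Trefftz sense (vanishing of derivatives up to order $p-m$ at $\bx^E$); the uniqueness part of \Cref{prop:Uniqueness} applied to $f=0$ then forces $v=w$.

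I do not expect a real obstacle: all the work is already done in \Cref{prop:Uniqueness}, which provides both the existence of the $b_{(r,s)}$ (via \Cref{Algo:general}) and the rigidity needed to conclude. The only small technical point worth making explicit is the degree bound $\partial_{x_1}^r v(x_1^E,\cdot)\in\IP^{p-r}(\IR^{d-1})$, which ensures that the expansion in $\{\psi_{(r,s)}\}_s$ is well defined and matches the cardinality $N_{d,p}$ in \eqref{eq:Ndp}.
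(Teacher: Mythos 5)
Your proposal is correct and follows essentially the same route as the paper's proof: both arguments reduce everything to \Cref{prop:Uniqueness}, prove spanning by matching Cauchy data of a given $v\in\QT^p_0(E)$ with a linear combination of the $b_{(r,s)}$, and prove linear independence by applying $\partial_{x_1}^{r}$ and restricting to $\{x_1=x_1^E\}$. Your explicit remark that $\partial_{x_1}^r v(x_1^E,\cdot)\in\IP^{p-r}(\IR^{d-1})$ is the same degree observation the paper states in passing, so there is nothing to add.
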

	\begin{proof}
	Each $b_{(r,s)}\in\calB^p_E$ in \eqref{eq:BEp} is uniquely defined by
	\Cref{prop:Uniqueness}. 
	We need to verify that  $\mathcal{B}_E^p$ is a spanning set of linearly independent functions.
	
	For any $v\in\QT^p_0(E)$ and $r=0,\dots,m-1$, since the restriction to $\{x_1=x_1^E\}$ of the derivative $\partial^{r}_{x_1} v$ is a polynomial of degree $p-r$, there exist some coefficients $\{\lambda_{(r,s)}\}_{(r,s)\in I_{d,p,m}}\subset \mathbb{R}$ such that
	\begin{align*}
	\partial^{r}_{x_1} v(x^E_1,\cdot)
	=\sum_{s=1}^{S_{d-1,p-r}}\lambda_{(r,s)}\psi_{(r,s)}
	=\sum_{s=1}^{S_{d-1,p-r}}\lambda_{(r,s)}\partial^{r}_{x_1}b_{(r,s)}(x^E_1,\cdot)
	=\partial^{r}_{x_1}
	\bigg(\underbrace{\sum_{s=1}^{S_{d-1,p-r}}\lambda_{(r,s)}b_{(r,s)}}_{=:w_r}\bigg)
	(x^E_1,\cdot).
	\end{align*} 
	Set $w:=\sum_{r=0}^{m-1}w_r=\sum_{(r,s)\in I_{d,p,m}}\lambda_{(r,s)}b_{(r,s)}$.
	By \eqref{eq:BEp}, $\partial^{r'}_{x_1} w_r(x_1^E,\cdot)=0$ for all $r'\ne r$, 
	thus $\partial^r_{x_1} w(x_1^E,\cdot)=\partial^r_{x_1} w_r(x_1^E,\cdot)=\partial^r_{x_1} v(x_1^E,\cdot)$ for all $r=0,\ldots,m-1$.
	Hence, $v$ and $w$ are both elements of $\QT^p_0(E)$ and they coincide by \Cref{prop:Uniqueness}, so that $v$
	is indeed a linear combination of $b_{(r,s)}$. 
	This proves that $\mathcal{B}_E^p$ is a spanning set for $\QT^p_0(E)$.
	
	Next we show that the  polynomials $\{b_{(r,s)} \}_{(r,s)\in I_{d,p,m}}$ are linearly independent.
	Assume that $\sum_{(r,s)\in I_{d,p,m}}c_{(r,s)} b_{(r,s)}=0$ for some coefficients $\{c_{(r,s)}\}_{(r,s)\in I_{d,p,m}}\subset \mathbb{R}$.
	Then, fixing any $\tilde{r}\in\{0,\dots,m-1\}$ and restricting to $\{x_1=x^E_1\}$, we obtain
	\begin{equation*}
	0=\sum_{(r,s)\in I_{d,p,m}}c_{(r,s)} \partial_{x_1}^{\tilde{r}} b_{(r,s)}(x^E_1,\cdot)
	=\sum_{s=1}^{S_{d-1,p-\tilde r}}
	c_{(\tilde{r},s)} \partial_{x_1}^{\tilde{r}} b_{(\tilde{r},s)}(x^E_1,\cdot)
	=\sum_{s=1}^{S_{d-1,p-\tilde r}}c_{(\tilde{r},s)}\psi_{(\tilde{r},s)}.
	\end{equation*}
	This implies that $c_{(\tilde{r},s)}=0$ for each $(\tilde{r},s)\in I_{d,p,m}$, since 
	$\{\psi_{(\tilde{r},s)}\}_{s=1,\ldots, S_{d-1,p-\tilde r}}$ are linearly independent. It concludes the proof.
	\end{proof}
	
	\Cref{prop:Basis} implies
	that the conditions in the definition of $\QT^p_0(E)$ are linearly independent:
	\begin{equation*}
	\dim\big(\IP^p(E)\big)-card\{\bi \in\IN_0^{d}\mid |\bi |\leq p-m\}=
	\binom{p+d}d-\binom{p+d-m}d
	=N_{d,p}=\dim\big(\QT^p_0(E)\big).
	\end{equation*}
	The equality between $\binom{p+d}d-\binom{p+d-m}d$ and the sum in \eqref{eq:Ndp} follows from manipulations of the binomials and the formula $ \sum_{k=0}^{n}\binom{r+k}{k}=\binom{r+n+1}{n}$ for $n,r\in \IN_0 $, under the assumption that $p\ge m$.
	In particular, we have 
	\begin{equation*}
	\dim\big(\QT^p_0(E)\big)=N_{d,p}
	=\begin{cases}
	m & d=1,\\ 
	m\left(p-\frac{m}{2}+\frac{3}{2}\right) & d=2,\\ 
	m\left(\frac12p^2+2p+\frac{11}{6}-\frac12mp-m+\frac{m^2}{6}\right) & d=3.
	\end{cases}
	\end{equation*} 
	For $m=2$, this expression simplifies to $N_{2,p}=2p+1$ and $N_{3,p}=(p+1)^2$.
	This means that, for second-order PDEs, $\QT^p_0(E)$ has the same dimension of the space of harmonic polynomials in $\IR^d$ of degree at most $p$, see \Cref{tab:DimQT}.
	In the one-dimensional case, when increasing the polynomial degree $p$ the dimension of the quasi-Trefftz space remains the same, but the space changes; see \cite[Fig.~5.1]{perinati2023quasitrefftz} for an example.

	\begin{table}[htb]
	\centering
	\begin{tblr}{colsep = 0.9mm,colspec={|c|ccc|ccc|ccc|ccc|ccc|ccc|ccc|}}
	\hline
	$p$ & \SetCell[c=3]{c}$2$ &&&  \SetCell[c=3]{c} $ 3$ &&&  \SetCell[c=3]{c}$4$ &&&   \SetCell[c=3]{c}$5$ &&& \SetCell[c=3]{c} $6$ &&&   \SetCell[c=3]{c}$10$ &&& \SetCell[c=3]{c} $20$ \\
	\hline
	$d=1$ & 2& 3&  \SetCell{bg=olive9}1.5 &
	2& 4& \SetCell{bg=olive9}2&
	2& 5& \SetCell{bg=olive9}2.5&
	2& 6& \SetCell{bg=olive9}3&
	2&7& \SetCell{bg=olive9}3.5&
	2& 11 & \SetCell{bg=olive9}5.5&
	2& 21 & \SetCell{bg=olive9}10.5\\
	$d=2$&
	5&6&\SetCell{bg=olive9}1.2&
	7&10&\SetCell{bg=olive9}1.43&
	9&15&\SetCell{bg=olive9}1.67&
	11&21&\SetCell{bg=olive9}1.91&
	13&28&\SetCell{bg=olive9}2.15&
	21&66&\SetCell{bg=olive9}3.14&
	41&231&\SetCell{bg=olive9}5.63\\
	$d=3$&
	9&10&\SetCell{bg=olive9}1.11&
	16&20&\SetCell{bg=olive9}1.25&
	25&35&\SetCell{bg=olive9}1.4&
	36&56&\SetCell{bg=olive9}1.56&
	49&84&\SetCell{bg=olive9}1.71&
	121&286&\SetCell{bg=olive9}2.36&
	441&1771&\SetCell{bg=olive9}4.02\\ \hline
	\end{tblr}
	\caption{The dimensions $\dim(\QT_0^p(E))$, $\dim(\IP^p(E))$, and the ratio $\frac{\dim(\IP^p(E))}{\dim(\QT_0^p(E))}$ for $m=2$.}
	\label{tab:DimQT}
	\end{table}

	Comparing against the dimension of the full polynomial space $\IP^p(E)$, we observe that
	\begin{align}\label{eq:dimreduc}
	\dim\big(\QT^p_0(E)\big)
	=\calO_{p\to\infty}(p^{d-1})\quad\ll\quad\dim\big(\IP^p(E)\big)
	=\binom{p+d}{d}=\calO_{p\to\infty}(p^{d}).
	\end{align}
	Thus, for large polynomial degrees $p$, the dimension of the quasi-Trefftz space is much smaller than the dimension of the full polynomial space of the same degree.
	
	Combined with \Cref{th:Approx}, this implies that smooth solutions of PDEs with smooth coefficients are approximated by $\QT^p_0(E)$ and by $\IP^p(E)$ with the same convergence rates with respect to the meshsize $h$, but with significantly less degrees of freedom in the quasi-Trefftz case.
	
	For $f\ne0$, the space $\QT_f^p(E)$ is not a linear space but an affine one.
	Given any $v_f\in \QT_f^p(E)$, which can be constructed using \Cref{Algo:general} with any choice of Cauchy data, we have $\QT_f^p(E)=v_f+\QT^p_0(E)$, therefore $\dim(\QT_f^p(E))=\dim(\QT^p_0(E))=N_{d,p}$.
	
	\section{Diffusion--advection--reaction equation}\label{s:ProblemDAR}
	Let $\Omega$ be an open, bounded, Lipschitz subset of $\mathbb{R}^d$ and denote by $\Gamma:=\partial\Omega$ its boundary. 
	We define the second-order, linear diffusion--advection--reaction operator $\calL$, applied to  $v:\Omega\to \mathbb{R}$, as
	\begin{equation}\label{daroperator}
	\mathcal{L}v:=	\mathrm{div}\left(-\bK  \nabla v  +\bbeta  v \right) +\sigma v,
	\end{equation}
	with coefficients
	$\bK:\Omega\to \mathbb{R}^{d\times d}$, $\bbeta:\Omega\to \mathbb{R}^d$ and $\sigma:\Omega\to \mathbb{R}$.
	
	Let $\Gamma_{\mathrm D}$ and $\Gamma_{\mathrm N}$ be sufficiently regular subsets of the boundary such that $\Gamma_{\mathrm D}\neq\emptyset$, $\Gamma=\Gamma_{\mathrm D} \cup \Gamma_{\mathrm N}$ and  $\Gamma_{\mathrm D}\cap\Gamma_{\mathrm N}=\emptyset$. Dirichlet and Neumann boundary conditions are imposed on $\Gamma_{\mathrm D}$ and $\Gamma_{\mathrm N}$, respectively.
	Let $\bn(\bx)$ be the outward unit normal vector to the boundary at $\bx\in\Gamma$.
	
	Let $f\in L^2(\Omega)$, $g_{\mathrm D}\in H^{\frac12}(\Gamma_{\mathrm D})$ and $g_{\mathrm N}\in L^2(\Gamma_{\mathrm N})$.
	We consider the following boundary value problem for the diffusion--advection--reaction equation:
	\begin{subequations}\label{eq:BVP}
	\begin{alignat}{2}
	\mathrm{div}(-\bK \nabla u +\bbeta u) +\sigma u &= f  && \quad \text{in  }\Omega,\label{eq}\\
	u&=g_{\mathrm D} && \quad\text{on }\Gamma_{\mathrm D}, \label{Dirichlet}\\
	- \bK \nabla u  \cdot \bn &=g_{\mathrm N} && \quad \text{on }\Gamma_{\mathrm N}.\label{Neumann}
	\end{alignat}
	\end{subequations}
	We make the following assumptions on the data:
	\begin{equation}\label{assumptionbeta}
	\bK=\bK^\top\in\left[L^{\infty}(\Omega)\right]^{d\times d},\quad \bbeta\in \left[W^{1,\infty}(\Omega)\right]^{d},\quad \sigma \in L^{\infty}(\Omega).
	\end{equation}
	In particular, this implies 
	$\bbeta\in H(\mathrm{div};\Omega).$
	We will write $\N{\bK}^2_{L^{\infty}(\Omega)}$ for the $L^\infty(\Omega)$ norm of the 2-norm of the matrix $\bK$, i.e.\ its spectral radius.
	We also assume that the \textit{ellipticity condition} is satisfied, i.e.\ there exists a constant $\kmin >0$ such that
	\begin{equation}\label{ellipticity}
	\bxi^\top\bK(\bx) \bxi \ge \kmin  \N{\bxi}^2\qquad \forall \bxi\in \mathbb{R}^d, 
	\;\text{a.e.\;}\bx\in \Omega,
	\end{equation}
	where $ \N{\cdot}$ denotes the Euclidean norm in $\mathbb{R}^d$.
	Choosing $\bxi=(1,0,\dots,0)^\top$ in \eqref{ellipticity} implies
	\begin{equation}\label{assalg}
	\bK_{11}(\bx)\ge \kmin >0\quad  \text{ a.e. }\bx\in\Omega.
	\end{equation}
	Under the ellipticity condition, $\calL$ is a non-degenerate second-order partial differential operator; in particular, \eqref{assalg} implies \eqref{hpcoeff} with $m=2$ and $\bj^*=2\be_1$ if the PDE coefficients are sufficiently smooth. 
	Moreover, we make the following assumption: if at least one among $\bbeta$ and $\sigma$ is not null, then there exists a constant $\sigma_0>0$ such that 
	\begin{equation}\label{assumptiondiv}
	\sigma(\bx)+\frac12\mathrm{div}\big(\bbeta(\bx)\big)\ge \sigma_{0}  \quad \text{ a.e. } \bx \in\Omega.
	\end{equation}
	
	When the advection term $\bbeta$ is non-zero, we distinguish between the inflow and outflow parts of the boundary $\Gamma$, defined as 
	\begin{equation}\label{inflowoutflow}
	\Gamma_{-}:=\{\bx\in\Gamma \mid \bbeta(\bx) \cdot \bn(\bx)<0 \}, \qquad
	\Gamma_{+}:=\{\bx\in\Gamma \mid \bbeta(\bx) \cdot \bn(\bx)\geq0\},
	\end{equation}
	respectively.
	Following e.g.~\cite[Thm.~3.8(iii)]{ern2004theory} and \cite[p.~2135]{houston2002discontinuous}, we assume that 
	$\bbeta\cdot \bn\ge 0$ on $\Gamma_N$ when $\Gamma_N$ is nonempty:
	\begin{equation}\label{gammaDgammaN}
	\Gamma_N\subset\Gamma_{+}, \qquad\text{equivalently,}\qquad \Gamma_- \subset\Gamma_D.
	\end{equation}
	This is done for simplicity but is also physically reasonable, for example, to model the movement of a substance knowing its concentration at the flow entrance but not at the exit.
	
	The classical variational formulation of problem \eqref{eq:BVP} is described in \cite[Chap.~3]{ern2004theory}. 
	In particular, \cite[Thm.~3.8]{ern2004theory} proves that, under these assumptions, \eqref{eq:BVP} admits a unique weak solution $u\in H^1(\Omega)$.

	\section{Discontinuous Galerkin discretization}\label{s:DG}
	\subsection{Mesh assumptions and notation}\label{s:Mesh}
	We assume that the domain $\Omega$ is a polytope of $\mathbb{R}^d$. 	
	We define polytopes by induction: a $0$-dimensional polytope is a subset of $\mathbb{R}^d$ containing a single point.
	For $n\in \IN$, $1\leq n\leq d$, a \textit{$n$-dimensional polytope} of $\mathbb{R}^d$ is a relatively open, bounded, connected and Lipschitz subset of a $n$-dimensional affine subspace of $\mathbb{R}^d$, such that its relative boundary is a finite union of \textit{$(n-1)$-facets}, i.e., closures of $(n-1)$-dimensional polytopes.
	For $n=1,2,3$, polytopes are simply segments, polygons and polyhedra, respectively.
	
	We discretize the domain $\Omega$ using a \textit{polytopal mesh} $\calT_h$, where each \textit{mesh element} $E\in\calT_h$ is a $d$-dimensional polytope  with  \textit{diameter} $h_E := \sup_{\bx,\by\in E} \abs{\bx-\by}$ and the \textit{meshsize} is $h:= \sup_{E\in\calT_h}h_E$.
	To analyze the DG method $h$-convergence, we consider a mesh sequence $\calT_{\mathcal{H}}:=\{\calT_h\}_{h\in\mathcal{H}}$ where $\mathcal{H}$ is a countable subset of $\{h\in\IR\mid h>0\}$ having only $0$ as accumulation point.
	
	For $ E\in\calT_h$, we denote by $\rho_E$ the \textit{radius} of the largest ball inscribed in $E$,
	and by $\abs{E}$ its $d$-dimensional \textit{measure}.
	The boundary of $E$ is indicated by $\partial E $ and  its $(d-1)$-dimensional measure by $\abs{\partial E}$. We define $\bn_E$ on  $\partial E$ as the \textit{unit outward normal} vector to the element $E$.
	
	We consider \textit{conforming meshes}: for all $E, E'\in\calT_h$, $E\neq E'$, the intersection $\partial E\cap \partial E'$ is either empty or a common $n$-dimensional facet with $n\leq d-1$.
	Distinct facets of $E$ may be co-planar.
	
	A \textit{mesh facet} is a $(d-1)$-facet of a polytopal mesh element $E\in\calT_h$, i.e.\ the closure of a $(d-1)$-dimensional polytope that is part of the boundary $\partial E$.
	We denote by $\calF_h$ the set of all facets of $\calT_h$.
	We assume that each $F\in\calF_h $ is either an \textit{interior facet} for which there exist two distinct elements $E_1, E_2\in \calT_h$ such that $F=\partial E_1\cap \partial E_2$, or a \textit{boundary facet} for which there exists an element $E\in\calT_h$ such that $F\subset \partial E\cap \partial \Omega$.
	The sets of interior and boundary facets are denoted by $\calF_h^{\mathrm I}$ and $\calF_h^{\mathrm B}$, respectively.
	We assume that it is possible to collect the boundary facets where Dirichlet conditions are assigned in a set, denoted $\calF_h^{\mathrm D}$, and the boundary facets where Neumann conditions are assigned in another set, denoted  $\calF_h^{\mathrm N}$.
	Similarly, $\calF_h^-$ and  $\calF_h^+$ denote the sets of inflow and outflow boundary facets.
	Thus $\calF_h=\calF_h^{\mathrm I}\cup\calF_h^{\mathrm D}\cup\calF_h^{\mathrm N}
	=\calF_h^{\mathrm I}\cup\calF_h^-\cup\calF_h^+$, where all unions are disjoint.
	For $F\in\calF_h$, we denote by	$h_F$ the \textit{diameter} of the facet $F$, by $\abs{F}$ its $(d-1)$-dimensional \textit{measure} and we associate to it a \textit{unit normal} vector $\bn_F$. 
	If $F\in \calF_h^{\mathrm B}$ then $\bn_F$ is chosen equal to $\bn$, i.e.\ pointing outward from $\Omega$.
	For each element $E\in\calT_h$ we define the set of all its facets as 
	$\calF_E:=\{F\in\calF_h\mid F\subset\partial E\}$.
	The maximum number of mesh facets composing the boundary of a mesh element is denoted by
	\begin{equation}\label{Npartial}
	N_{\partial}:=\max_{E\in\calT_h}card(\calF_E).
	\end{equation}

	We assume to work with mesh sequences that satisfy the following properties:
	\begin{enumerate}[label=(\roman*)]
	
	\item\label{it:starshaped} \textit{Star-shaped property}:
	there exists $0< r_\star\leq\frac12$ such that,  for all $h\in\mathcal{H}$, each $ E\in\calT_h$ is star-shaped with respect to a ball centered at some $\bx\in E$ and with radius $r_\star h_E$.
	\item\label{it:Graded}
	\textit{Graded mesh}(\cite[p.~744]{arnold1982interior}):
	there exists $\Cg >0$ such that,  for all $h\in\mathcal{H}$, for all $ E\in\calT_h $ and for all $F\in\calF_E$,
	\begin{equation}\label{gradedmesh}
	h_E\le\Cg h_F.
	\end{equation}
	\end{enumerate}
	The star-shaped property \ref{it:starshaped} implies the classical shape-regularity property (e.g.\ \cite[Def.~1.38(i)]{di2011mathematical}):
	\begin{equation}\label{shaperegularity}
	h_E\leq \Csr\rho_E, \qquad \text{with }\Csr =r_\star^{-1}.
	\end{equation}
	
	The star-shaped property \ref{it:starshaped} is used in the DG stability analysis of \cref{s:WellP}, while the graded-mesh condition \ref{it:Graded} is only used to prove quasi-Trefftz convergence rates in \Cref{th:finalerror}.
	The star-shaped property \ref{it:starshaped} implies also 
	the ``chunkiness'' of the mesh sequence, which will be used
	in the proof of Theorem \ref{th:finalerror}.
	\begin{lemma}[Chunkiness]\label{lemma:ChunkyStar}
	Let $E\subset\IR^d$ be a polytope with diameter $h_E$ that is star-shaped with respect to an open ball $B$ of radius $\rho_\star h_E$, for $0<\rho_\star\leq\frac12$.
	Then, 
	\begin{equation}\label{eq:chunky}
	h_E|\partial E|\le \frac d{\rho_\star}|E|.
	\end{equation}
	\end{lemma}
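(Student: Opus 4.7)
The plan is to combine a divergence-theorem identity with a geometric one-sidedness property of the inscribed ball $B$. First, I would fix the center $\bx_0\in B$ and apply the divergence theorem to the affine vector field $\bx\mapsto \bx-\bx_0$ on $E$. Its divergence is constantly $d$, and on each facet $F\in\calF_E$ with outward unit normal $\bn_F$ the quantity $(\bx-\bx_0)\cdot\bn_F$ is constant, equal to the signed distance $d_F$ from $\bx_0$ to the affine hyperplane $H_F$ containing $F$, measured in the direction of $\bn_F$. This yields the cone-decomposition identity
\begin{equation*}
d\,|E| \;=\; \sum_{F\in\calF_E} d_F\,|F|.
\end{equation*}

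The second, and main, step is the geometric bound $d_F\ge \rho_\star h_E$ for every $F\in\calF_E$. I would prove this by contradiction: if some $\by\in B$ sat strictly on the outward side of $H_F$, I would pick $\bx$ in the relative interior of $F$ and use that star-shapedness with respect to $B$ forces the closed segment $[\by,\bx]$ into $\overline{E}$. However, approaching $\bx$ along this segment from $\by$ means approaching $F$ from the outward side of $H_F$, which lies outside $E$ on some neighborhood of a relative interior point of the facet, contradicting $[\by,\bx]\subset\overline{E}$. Hence $B$ is contained in the closed half-space on the inward side of $H_F$, and in particular $d_F=\mathrm{dist}(\bx_0,H_F)\ge \rho_\star h_E$.

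Substituting this lower bound into the divergence identity and using $|\partial E|=\sum_{F\in\calF_E}|F|$ gives $d\,|E|\ge \rho_\star h_E\,|\partial E|$, which rearranges to \eqref{eq:chunky}. The main subtlety to watch is precisely the one-sidedness claim: for non-convex polytopes $H_F$ may cut through $E$ elsewhere, so one cannot simply invoke convexity of $E$. The contradiction must stay strictly local at a relative interior point of $F$, where $E$ lies on a single side of $H_F$; as such points form a dense subset of $F$, their existence poses no issue. Everything else reduces to the standard cone-volume computation $|E|=\tfrac1d\sum_F d_F|F|$ for star-shaped polytopes.
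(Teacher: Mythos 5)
Your proof is correct and takes essentially the same route as the paper's: the paper decomposes $E$ into the pyramids $Y_F$ over the facets with apex at the ball's center, obtaining $|E|=\frac1d\sum_{F}H_F|F|$ --- exactly your divergence-theorem identity --- and bounds $H_F\ge\rho_\star h_E$ by observing that the hyperplane containing $F$ cannot meet $B$. Your derivation of the identity via the divergence theorem and your explicit local one-sidedness argument (which pins down the sign of $d_F$, a point the paper leaves implicit in the pyramid decomposition) are just slightly more formal versions of the same two steps.
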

	\begin{proof}
	Assume without loss of generality that $B$ is centered at the origin $\bzero$.
	For each $(d-1)$-dimensional facet $F\in \calF_E$ of $E$, define $Y_F:=\{\by=t\bx\mid \bx\in F,\;0\le t<1\}$, the $d$-dimensional pyramid with basis $F$ and apex at the origin.
	By the star-shapedness with respect to the origin of $E$, we have that $E=\bigcup_{F\in\calF_E}Y_F$ and that $Y_{F_1}\cap Y_{F_2}$ has zero $d$-dimensional measure for different facets $F_1,F_2\in\calF_E$.
	The $d$-dimensional measure of $Y_F$ is $|Y_F|=\frac1d H_F|F|$, where the pyramid height $H_F$ is the distance between the hyperplane $\Pi_F$ containing $F$ and the origin (special cases are the usual triangle area formula ``half base times height'', and the 3D pyramid volume ``one third base area times height'').
	Since $\Pi_F$ contains a boundary facet and $E$ is star-shaped with respect to $B$, $\Pi_F$ cannot intersect $B$, thus $H_F\ge\rho_\star h_E$.
	Then the assertion follows:
	$$
	\frac{h_E|\partial E|}{|E|}=\frac{h_E\sum_{F\in\calF_E}|F|}{\sum_{F\in\calF_E}|Y_F|}
	=\frac{dh_E\sum_{F\in\calF_E}|F|}{\sum_{F\in\calF_E}H_F|F|}
	\le\frac{dh_E}{\inf_{F\in\calF_E}H_F}
	\le\frac d\rho_\star.
	$$
	\end{proof}
	Inequality \eqref{eq:chunky} is an equality when each facet of $E$ belongs to a hyperplane tangential to the ball $B$; this is the case, e.g., for all simplices, hypercubes, regular polygons and regular polyhedra.
	
	To apply the quasi-Trefftz approximation result of \Cref{th:Approx}, $E$ has to be star-shaped with respect to the point $\bx^E$ used to define the local discrete space $\QT^p_f(E)$: this point need not be the center of the ball in \Cref{lemma:ChunkyStar}.
	
	\Cref{lemma:ChunkyStar} ensures that, under assumption \ref{it:starshaped}, inequality \eqref{eq:chunky} holds for all $E\in\calT_h$ with $\rho_\star=r_\star$.

	We recall the definition of the \textit{broken Sobolev spaces}:
	\begin{align*}
	H^{m}(\calT_h):=&\{\varphi\in L^{2}(\Omega) \mid \varphi_{|_E} \in H^{m}(E) \quad \forall E \in \calT_h \}, \quad m\in\IN_0,\\
	H(\mathrm{div};\calT_h):=&\{\bw  \in[ L^{2}(\Omega)] ^{d} \mid 
	\bw_{|_E} \in H(\mathrm{div};E)  \quad \forall E \in \calT_h \}.
	\end{align*}
	
	We use the standard DG notation \cite[(2.5)--(2.7)]{ayuso2009discontinuous} for averages $\mvl{\cdot}$ and jumps $\jmp{\cdot}$ of any scalar function $\varphi\in H^1(\calT_h)$ and any vector-valued function  $\bw\in [H^1(\calT_h)]^d$ across the mesh facets: 
	\begin{align*}
	&\begin{cases}
	\begin{aligned}
	\mvl{\varphi}:=&\ \frac{\varphi_{|_{E_1}}+\varphi_{|_{E_2}}}2, &\qquad	
	\mvl{\bw}:=&\ \frac{\bw_{|_{E_1}}+\bw_{|_{E_2}}}2,\\
	\jmp{\varphi}:=&\ \varphi_{|_{E_1}}\bn_{E_1}+\varphi_{|_{E_2}} \bn_{E_2},&\qquad
	\jmp{\bw}:=&\ \bw_{|_{E_1}}\cdot\bn_{E_1}+\bw_{|_{E_2}} \cdot\bn_{E_2},
	\end{aligned}  \quad  \text{on }  F=\partial E_1 \cap \partial E_2,
	\end{cases}
	\\&\begin{cases}
	\begin{aligned}
	\mvl{\varphi} :=&\ \varphi_{|_E}, &\qquad	\mvl{\bw} :=&\ \bw_{|_E},\\
	\jmp{\varphi}:=&\ \varphi_{|_E}\bn_E,& \qquad \jmp{\bw}:=&\ \bw_{|_E}\cdot \bn_E,
	\end{aligned} \qquad  \text{on }  F\subset\partial E \cap \partial \Omega.
	\end{cases}
	\end{align*}
	We will use the ``DG magic formula'' \cite[Prop.~2.2.5]{perinati2023quasitrefftz}:
	for all $\varphi\in H^1(\calT_h)$ and for all $\bw\in[H^1(\calT_h)]^d$,
	\begin{equation}\label{DG_magic}
	\sum_{E\in \calT_h} \int_{\partial  E} \bw \cdot \bn_{ E} \varphi =
	\sum_{ F\in \calF_h^{\mathrm I}}\int_{ F} \big(\mvl{\bw} \cdot \jmp{\varphi} +  \jmp{\bw}  \mvl{\varphi}\big)
	+\int_{\partial\Omega}  \bw \cdot \bn \varphi.
	\end{equation}
	
For $p\in\IN_0$, we define the \textit{broken polynomial space} of degree at most $p$ on the mesh as
$\IP^p(\calT_h):=\{v\in L^2(\Omega) \mid  v_{|_E} \in \IP^p(E)  \quad \forall E \in \calT_h \}$.
For mesh sequences $\calT_{\mathcal{H}}$ enjoying the star-shaped property~\ref{it:starshaped}, the following \textit{discrete inverse trace inequality} holds: given $p\in\IN_0$,
\begin{equation} \label{discretetraceinequality}
\N{v}^2_{L^2(\partial E)}\leq  \frac{(p+1)(p+d)}{ r_\star} h_E^{-1} \N{v}^2_{L^2(E)}
\qquad \forall h\in\mathcal{H}, \quad E\in\calT_h, \quad v\in \IP^p(E).
\end{equation}
Indeed, under the star-shapedness assumption \ref{it:starshaped}, each mesh element can be partitioned as $E=\bigcup_{F\in\calF_E}Y_F$, where $Y_F$ is the  $d$-dimensional pyramid with basis $F$ and apex at the center of the ball in \ref{it:starshaped}. 
Then, inequality \eqref{discretetraceinequality} follows applying \cite[eq.~(3.4)]{CangianiDGH2017} (first proved in \cite{warburton2003constants} for arbitrary $d$) to each $Y_F$, using that the height of $Y_F$ is at least $r_\star h$, and that $|Y_F|\ge\frac1d r_\star h_E|F|$ as in the proof of Lemma~\ref{lemma:ChunkyStar}.
 Since in the DG scheme we will use the quasi-Trefftz space, which is a subset of the full polynomial space, this inequality can be applied.
	
	\subsection{Discontinuous Galerkin formulation}\label{s:DGformulation}
	We describe the DG variational formulation of the dif\-fu\-sion--ad\-vec\-tion--reaction problem \eqref{eq:BVP}.
	We consider the Symmetric Interior Penalty Galerkin (SIPG) method \cite{arnold1982interior} to handle the diffusion term, and the upwind DG method to handle the advection--reaction terms, following mostly 
	\cite[sect.~2.3]{di2011mathematical}.
	
	We define the DG scheme and carry out the abstract error analysis for a general discrete sub\-space $V_h$ of the broken polynomial space $\IP^p(\calT_h)$. 
	We will choose a global quasi-Trefftz space in \cref{s:QTDGdiscretizazion} and prove convergence rates for it.
	Following the non-conforming analysis of \cite[Thm.~1.35]{di2011mathematical} we define 
	$$
	V_*:= H^1(\Omega) \cap H^2(\calT_h), \qquad V_{*h} := V_* + V_h.
	$$
	
	Let $u$ be the weak solution of problem \eqref{eq:BVP}.
	We assume $u\in V_*$, which is guaranteed e.g.\ if $\Gamma_{\mathrm N}=\emptyset$, $\Omega$ is convex, and the PDE data are sufficiently smooth, by e.g.\ \cite[Thm.~3.12]{ern2004theory}.
	\footnote{The choice of requiring $H^2$ elementwise regularity is made only for simplicity: what we really need is that the trace of $\nabla u$ is in $L^2(\partial E)^d$ for all elements, which is ensured by $u\in V_*:=H^1(\Omega)\cap H^{\frac32+\epsilon}(\calT_h)$ for some $\epsilon>0$.}
	
	We consider the following discretization of problem \eqref{eq:BVP}:  
	\begin{equation}\label{variational}
	\text{Find } u_h \in V_h \text{ such that }
	\calA_h^\mathrm{dar}(u_h,v_h)=L_h(v_h) \quad \forall v_h \in V_h,
	\end{equation}
	with the DG bilinear form 
	$\calA_h^{\mathrm{dar}}: V_{*h} \times V_h \to \mathbb{R} $,
	\begin{align*}
	\calA_h^\mathrm{dar}(w,v_h):=&\ \calA_h^{\mathrm d}(w,v_h)+\calA_h^\mathrm{ar}(w,v_h),
	\\
	\calA_h^{\mathrm d}(w,v_h):=&
	\sum_{E\in \calT_h}	\int_E \bK  \nabla w \cdot \nabla  v_h\\&
	+
	\sum_{F\in\calF_h^{\mathrm I}}
	\int_F \Big(-\mvl{\bK   \nabla w } \cdot \jmp{v_h}- \jmp{w} \cdot \mvl{\bK   \nabla v_h}  
	+ \gamma\frac{K_F}{h_F} \jmp{w}\cdot \jmp{v_h}\Big)\\&
	+
	\sum_{F\in\calF_h^{\mathrm D}}
	\int_F \Big(-\bK   \nabla w \cdot \bn v_h- w \bK   \nabla v_h \cdot \bn+ \gamma\frac{K_F}{h_F} w v_h\Big),
	\\
	\calA_h^\mathrm{ar}(w,v_h):=&
	\sum_{E\in\calT_h}
	\int_E  \Big(-(\bbeta w) \cdot  \nabla  v_h + \sigma w  v_h\Big)\\ 
	&+
	\sum_{F\in\mathcal{F}_h^{\mathrm I}}
	\int_F \Big( \mvl{\bbeta  w} \cdot \jmp{v_h}+\frac12|\bbeta\cdot \bn_F|\jmp{w}\cdot\jmp{v_h}\Big)+
	\sum_{F\in\mathcal{F}_h^+}  
	\int_F ( \bbeta  w) \cdot \bn v_h,
	\end{align*}
	and the linear form $L_h:V_h\to\IR$,
	\begin{equation*}
	L_h(v_h):=\sum_{E\in\calT_h}\int_E  f v_h - \sum_{F\in\mathcal{F}_h^\mathrm{N}} 
	\int_F  g_{\mathrm N} v_h +
	\sum_{F\in\mathcal{F}_h^{\mathrm D}}	
	\int_F g_{\mathrm D}  \Big(- \bK   \nabla v_h  \cdot \bn  + \gamma\frac{K_F}{h_F} v_h\Big)
	-\sum_{F\in\mathcal{F}_h^-}\int_F g_D\bbeta\cdot\bn v_h.
	\end{equation*}
	The bilinear form $\calA_h^\mathrm{dar}$ depends on the penalty parameters $\gamma,K_F>0$ that penalize the jumps of the function values.
	The quantity $\gamma>0$ is a dimensionless constant independent of the diffusion coefficient $\bK$, while $K_F$ is a diffusion-dependent penalty parameter defined on each facet such that $\kmin \leq K_F\leq \N{\bK}_{L^{\infty}(E_1\cup E_2)}$ for all $F\in \calF_h^{\mathrm I}$ with $F=\partial E_1 \cap \partial E_2$, and $\kmin \leq K_F\leq \N{\bK}_{L^{\infty}(E)}$ for all $F\in \calF_h^{\mathrm B}$ with $F\subset\partial E \cap \Gamma$.
	
	Problem \eqref{variational} is independent of the choice of the normal $\bn_F$ on the internal facets, since its only occurrence in $\calA_h^\mathrm{dar}$ is inside the absolute value.
	
	The term on the interior facets in $\calA_h^\mathrm{ar}$ is the penalization form of the classical upwind flux, \cite[eq.~(20)]{brezzi2004discontinuous}.
	Indeed, if $\bx\mapsto\bbeta(\bx) \cdot \bn_F(\bx)$ does not change sign in any given $F\in\calF_h^{\mathrm I}$, then, for $F=\partial E_1 \cap \partial E_2$ and $\varphi\in H^1(\calT_h)$,
	\begin{align*}
	\mvl{\bbeta \varphi} \cdot \bn_F +\frac12\abs{\bbeta\cdot\bn_F} \jmp{\varphi}\cdot \bn_F
	=\mvl{\bbeta \varphi}_{\mathrm{upw}} \cdot \bn_F
	\quad\text{where}\;
	\mvl{\bbeta\varphi}_{\mathrm{upw}}: = \begin{cases}
	\bbeta\varphi_{|_{E_2}} & \text{if }\bbeta \cdot \bn_{E_1}<0,\\
	\bbeta\varphi_{|_{E_1}} & \text{if }\bbeta \cdot \bn_{E_1}>0,\\ 
	\bbeta	\mvl{ \varphi} & \text{if }\bbeta \cdot \bn_{E_1}=0.
	\end{cases}
	\end{align*}
	
	\begin{rem}
	The diffusion part of the DG formulation \eqref{variational} corresponds to the formulation in \cite[eq.~(2.24)]{riviere2008discontinuous} with $\alpha=0$ (no reaction), $\epsilon=-1$ (SIPG), and $\sigma_e^1=0$ for all facets (no gradient jump stabilization term).
	The penalty term is slightly different: on each facet, \cite{riviere2008discontinuous} uses a number divided by a power of the $(d-1)$-dimensional measure of the facet, while we use a constant $\gamma$ (independent of the facet) times a diffusion-dependent penalty parameter $K_F$, divided by the facet's diameter $h_F$, following \cite[eq.~(4.64)]{di2011mathematical}.
	In turn, \cite[eq.~(4.64)]{di2011mathematical} assumes piecewise-constant diffusion, uses diffusion-dependent weights for the average and $K_F$ is chosen as the harmonic mean of (scalar) $\bK$ across $F$. 
	This penalty strategy is particularly important in the advection-dominated/reaction-dominated regimes to tune automatically the penalty parameter and reduce spurious oscillations, see \cite[p.~150]{di2011mathematical} and section~\ref{ex3} below.
	
	For what concerns the advection--reaction terms, \eqref{variational} follows \cite[eq.~(2.36)]{di2011mathematical} with $\eta=1$ and with the right-hand side as in  \cite[Remark 2.17]{di2011mathematical}.
	\end{rem}

	\subsection{Mesh-dependent norms}\label{s:Norms}	
	For all $v\in V_{*h}$ we define four mesh-dependent norms and the seminorm $|\cdot|_{\mathrm J}$:
	\begin{align}\nonumber
	\vertiii{v}^2_{\mathrm d}:=&\sum_{E\in \calT_h}\int_E\bK\nabla v\cdot \nabla v+\abs{v}_{\mathrm J}^2, \qquad 
	\abs{v}_{\mathrm J}^2:=\sum_{F\in\calF_h^{\mathrm I}}\gamma\frac{K_F}{h_F}\int_F \jmp{v}^2+\sum_{F\in\calF_h^{\mathrm D}}\gamma\frac{K_F}{h_F}\int_Fv^2,\\
	\vertiii{v}^2_\mathrm{ar}:=&\sigma_0\N{v}^2_{L^{2}(\Omega)}+ \frac12\sum_{F\in\calF_h}\int_F\abs{\bbeta\cdot\bn_F}\jmp{v}^2,\label{eq:Norms}\\
	\vertiii{v}^2_\mathrm{dar}:=&	\vertiii{v}^2_{\mathrm d}+	\vertiii{v}^2_\mathrm{ar}, \nonumber\\
	\vertiii{v}^2_{\mathrm{dar},*}:=&	\vertiii{v}^2_\mathrm{dar}+\sum_{E\in \calT_h}h_E\N{\bK^{\frac12}\nabla v\cdot \bn_E}^2_{L^2(\partial E)}+\sum_{E\in \calT_h}\N{\bbeta}_{L^{\infty}(E)}\N{v}^2_{L^2(\partial E)}.
	\nonumber
	\end{align}
	We write $\bK^\frac12$ for the unique positive-definite matrix field such that $\bK^\frac12\bK^\frac12=\bK$ in $\Omega$.
	Note that $\vertiii{\cdot}_{\mathrm d}$ is a norm because we have assumed that $\Gamma_{\mathrm D}$ is not empty.
	
	\subsection{Well-posedness, stability, quasi-optimality}\label{s:WellP}
	
	The aim of this section is to prove the well-posedness of the discrete DG problem \eqref{variational} and the quasi-optimality error estimates of the DG method.
	The proof of the next theorem relies on Lax--Milgram theorem and consists of verifying the three assumptions of the abstract result in \cite[Thm.~1.35]{di2011mathematical}:
	consistency, discrete coercivity and boundedness.
	
	\Cref{th:errorestimate} holds for arbitrary polynomial spaces $V_h\subset\IP^p(\calT_h)$ (more generally, for any discrete space for which an inverse trace inequality such as \eqref{discretetraceinequality} holds).
	With this generality, we cannot immediately apply standard results such as those in \cite{di2011mathematical}: their analysis of the advection--reaction bilinear form relies on the ``boundedness on orthogonal subscales'' \cite[Lemma~2.30]{di2011mathematical}, whose proof requires that (piecewise) partial derivatives of elements of $V_h$ belong to $V_h$, a property satisfied by $\IP^p(\calT_h)$ but not all its subspaces.
	Similar assumptions are common in the literature, e.g.~\cite[eq.~(3.6)]{houston2002discontinuous}. Many works also assume piecewise-constant diffusion, e.g.~\cite[eq.~(4.3)]{houston2002discontinuous}, \cite[Assumption 4.43]{di2011mathematical}, \cite{ayuso2009discontinuous}, while we are interested in the general case $\bK\in [L^{\infty}(\Omega)]^{d\times d}$.
	These hypothesis are not necessary and are often made for simplicity of presentation, however we can not directly rely on their analysis.
	We refer to \cite[sect.~5.1--5.2]{CangianiDGH2017} for a more general analysis of an inconsistent variant of the SIP-upwind DG method for second-order PDEs with nonnegative characteristic form.
	
	\Cref{th:errorestimate} gives an explicit estimate, which in \cref{s:hConvergence} will be combined with
	the local approximation bound \eqref{eq:Approx} of the quasi-Trefftz space.
	In particular, our analysis for the discrete coercivity of the diffusion bilinear form follows \cite[sect.~2.7.1]{riviere2008discontinuous}, while the continuity is similar to \cite[Lem\-ma~4.52]{di2011mathematical}. 
	Concerning the advection--reaction bilinear form, for the coercivity we follow \cite{brezzi2004discontinuous}, while, 
	to prove continuity avoiding
	conditions like \cite[eq.~(3.6)]{houston2002discontinuous}, we estimate the quantity
	$\abs{\sum_{E\in\calT_h}	\int_E  (\bbeta v) \cdot  \nabla  w_h}$  
	using the diffusion norm $\vertiii{w_h}_{\mathrm{d}}$ for the second term.

	\begin{theorem}\label{th:errorestimate}
	Under the assumptions on the BVP and the mesh made in \cref{s:ProblemDAR,s:Mesh}, let 
	$\gamma_0:=\frac{\N{\bK}^2_{L^{\infty}(\Omega)}}{\kmin ^2}N_{\partial} \frac{(p+1)(p+d)}{ r_\star}> 0$ with
	 $r_\star$ defined in {\rm\ref{it:starshaped}}, $N_{\partial}$ in \eqref{Npartial} and $\kmin $ in \eqref{ellipticity}, and recall $\sigma_0$ from \eqref{assumptiondiv}. 
	Assume that the penalty parameter satisfies $\gamma>\gamma_0$, and set 
	$$\alpha:=1-\sqrt{\frac{\gamma_0}{\gamma}},\qquad
	M:=5+ \frac{\N{\bbeta}_{L^{\infty}(\Omega)}} {\sqrt{\kmin  \sigma_0}}+	\frac{\N{\sigma}_{L^{\infty}(\Omega)} }{\sigma_0}
	+\bigg(\frac{\N{\bK}_{L^{\infty}(\Omega)}}{\gamma \kmin}\bigg)^{\frac12}.
	$$
	Then the bilinear form $\calA_h^\mathrm{dar}$ is coercive on $V_h$ in $\vertiii{\cdot}_\mathrm{dar}$ norm:
	\begin{equation}\label{eq:coercive}
	\calA_h^\mathrm{dar}(v_h,v_h)\ge \alpha\vertiii{v_h}^2_\mathrm{dar} \qquad \forall v_h\in V_h.
	\end{equation}
	The DG variational problem \eqref{variational} admits a unique solution $u_h\in V_h$, for any subspace 
	$V_h\subset \IP^p(\calT_h)$.
	The bilinear form $\calA_h^\mathrm{dar}$ is bounded on $V_{*h}\times V_h$ in $\vertiii{\cdot}_{\mathrm{dar},*}$--$\vertiii{\cdot}_\mathrm{dar}$ norms: 
	\begin{equation*}
	\calA_h^\mathrm{dar}(v,w_h)\leq M\vertiii{v}_{\mathrm{dar},*} \vertiii{w_h}_\mathrm{dar} \qquad \forall (v,w_h)\in V_{*h}\times V_h.
	\end{equation*}
	The weak solution $u$ of the BVP \eqref{eq:BVP}
	solves the variational problem \eqref{variational}, i.e.\ \eqref{variational} is consistent.
	Moreover, the following quasi-optimality error estimate holds true:
	\begin{equation}\label{quasioptimality}
	\vertiii{u-u_h}_\mathrm{dar}\leq \left(1+\frac{M}{\alpha}\right) \inf_{v_h\in V_h}\vertiii{u-v_h}_{\mathrm{dar},*}.
	\end{equation}
	\end{theorem}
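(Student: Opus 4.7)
The plan is to verify the three hypotheses of the abstract Strang-type result \cite[Thm.~1.35]{di2011mathematical}: consistency, discrete coercivity in $\vertiii{\cdot}_\mathrm{dar}$, and boundedness in the $\vertiii{\cdot}_{\mathrm{dar},*}$--$\vertiii{\cdot}_\mathrm{dar}$ norms. Once these are established, Lax--Milgram yields existence and uniqueness of $u_h$, and the abstract lemma delivers the quasi-optimality bound \eqref{quasioptimality} with constant $1+M/\alpha$.

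For consistency, I integrate by parts element by element in the strong equation tested against an arbitrary $v_h\in V_h$, and rewrite the element-boundary contributions using the DG magic formula \eqref{DG_magic}. The regularity $u\in V_*$ forces $\jmp{u}=0$ and $\jmp{\bK\nabla u}=0$ on interior facets, so the symmetrization and penalty terms collapse there; on $\Gamma_D$ the Dirichlet datum $u=g_D$ substitutes into the analogous terms and reproduces the contributions appearing in $L_h$, while on $\Gamma_N$ the Neumann datum substitutes for $-\bK\nabla u\cdot\bn$.

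For coercivity I treat the two forms separately. In $\calA_h^{\mathrm d}(v_h,v_h)$ the cross terms $-2\sum_{F\in\calF_h^{\mathrm I}\cup\calF_h^{\mathrm D}}\int_F\mvl{\bK\nabla v_h}\cdot\jmp{v_h}$ are estimated by Young's inequality with parameter $\delta=\sqrt{\gamma/\gamma_0}$; the resulting $\|\bK^{1/2}\nabla v_h\cdot\bn_F\|_{L^2(F)}^2$ contributions are controlled using the discrete inverse trace inequality \eqref{discretetraceinequality} applied componentwise to $\nabla v_h$, the bound $\|\bK\|_{L^\infty}^2/\kmin^2$, and the face-count factor $N_\partial$, yielding exactly the constant $\gamma_0$. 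This absorbs a factor $\sqrt{\gamma_0/\gamma}$ from both the volume gradient term and from $|v_h|_J^2$, leaving the coercivity constant $\alpha=1-\sqrt{\gamma_0/\gamma}$. For $\calA_h^{\mathrm{ar}}(v_h,v_h)$ I integrate by parts element by element using $(\bbeta v_h)\cdot\nabla v_h=\tfrac12\bbeta\cdot\nabla(v_h^2)$, apply \eqref{assumptiondiv} to obtain $\sigma_0\|v_h\|_{L^2(\Omega)}^2$ from the volume part, and recombine the element-boundary contributions with the interior- and outflow-facet terms via \eqref{DG_magic} to produce $\tfrac12\sum_{F\in\calF_h}\int_F|\bbeta\cdot\bn_F|\jmp{v_h}^2$, matching the second term of $\vertiii{\cdot}_{\mathrm{ar}}$.

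For boundedness I apply Cauchy--Schwarz termwise. The volume diffusion, reaction and jump contributions are immediate in the two norms. The symmetrization trace terms $\int_F\mvl{\bK\nabla w}\cdot\jmp{v_h}$ and their mirror images are paired, respectively, with the extra norm $\sum_E h_E\|\bK^{\frac12}\nabla v\cdot\bn_E\|_{L^2(\partial E)}^2$ of $\vertiii{\cdot}_{\mathrm{dar},*}$ and $|w_h|_J$, and, for the $v_h$ side, with the inverse trace inequality \eqref{discretetraceinequality} against $|w|_J$; the advection trace terms are treated analogously using the $\|\bbeta\|_{L^\infty(E)}\|\cdot\|_{L^2(\partial E)}^2$ contribution. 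The main obstacle is the advection volume term $\sum_E\int_E(\bbeta w)\cdot\nabla v_h$: to avoid the hypothesis that piecewise derivatives of $V_h$ lie in $V_h$ (as in \cite[eq.~(3.6)]{houston2002discontinuous}), I bound $\|\bbeta w\|_{L^2(E)}\le\|\bbeta\|_{L^\infty}/\sqrt{\sigma_0}\cdot\sqrt{\sigma_0}\|w\|_{L^2(E)}$ and $\|\nabla v_h\|_{L^2(E)}\le\kmin^{-1/2}\|\bK^{\frac12}\nabla v_h\|_{L^2(E)}$, borrowing the diffusion part of the test-function norm and contributing the factor $\|\bbeta\|_{L^\infty}/\sqrt{\kmin\sigma_0}$ to $M$. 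Summing all constants produces the claimed expression for $M$ and concludes the proof.
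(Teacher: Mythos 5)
Your proposal follows essentially the same route as the paper's proof: the same three-step verification of consistency, discrete coercivity (with the identical Young/inverse-trace argument producing the constant $\gamma_0$, and the identical integration-by-parts treatment of the advection--reaction form), and termwise boundedness, including the same device of borrowing the diffusion part of the test-function norm to control the advection volume term and thereby avoid the orthogonal-subscales hypothesis. The only imprecision is in the consistency step: the vanishing of the normal jump of the flux $-\bK\nabla u+\bbeta u$ on interior facets is not a consequence of the regularity $u\in V_*$ alone, but of $u$ being a weak solution, which places the total flux in $H(\mathrm{div};\Omega)$.
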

	
	\begin{proof}
	\textbf{Discrete Coercivity}: First we establish the coercivity of the diffusion bilinear form $\calA_h^{\mathrm d}$ on $V_h$ with respect to the $\N{\cdot}_{\mathrm d}$-norm, then we show that the advection--reaction bilinear form $\calA_h^\mathrm{ar}$ is coercive on $V_h$ with respect to the $\N{\cdot}_\mathrm{ar}$-norm.  Combining these two results, we deduce the discrete coercivity of the diffusion--advection--reaction bilinear form $a^\mathrm{dar}_h$ with respect to the $\N{\cdot}_\mathrm{dar}$-norm.
	
	Let $v_h\in V_h$.
	Applying Young's inequality to the bound \eqref{consistencyterm} proved in the appendix we deduce 
	\begin{align*}
	&\Bigg|\sum_{F\in\calF_h^{\mathrm I}\cup \calF_h^{\mathrm D}} \int_F  \mvl{\bK   \nabla v_h} \cdot \jmp{v_h}\Bigg|\\
	&\leq\frac{\N{\bK}_{L^{\infty}(\Omega)}}{\kmin }
	\left( \frac{N_{\partial} (p+1)(p+d)}{\gamma\,r_\star}\right)^{\frac12}
	\left(   \frac12	
	\sum_{E\in\calT_h} 
	\N{\bK^{\frac12}\nabla v_h}_{L^2( E)}^2
	+  \frac12	 \abs{v_h}_{\mathrm J}^2\right).
	\end{align*}
	Using this bound we achieve
	$\calA_h^{\mathrm d}(v_h,v_h)\ge
	\big(1-	\frac{\N{\bK}_{L^{\infty}(\Omega)}}{\kmin}
	( \frac{N_{\partial} (p+1)(p+d)}{\gamma\, r_\star})^{\frac12}\big)\vertiii{v_h}_{\mathrm d}^2$.
	Choosing $\gamma$ large enough, $\gamma > \gamma_0=  \frac{\N{\bK}^2_{L^{\infty}(\Omega)}}{k_{\min}^2}
	N_{\partial} \frac{(p+1)(p+d)}{ r_\star}$, we obtain the discrete coercivity 
	$\calA_h^{\mathrm d}(v_h,v_h)\ge (1-\sqrt{\frac{\gamma_0}\gamma})\vertiii{v_h}_{\mathrm d}^2$
	of the diffusion bilinear form.
	
	On the other hand, integration by parts yields
	\begin{equation*}
	\sum_{E\in\calT_h}\int_E  (\bbeta v_h) \cdot  \nabla  v_h=\sum_{E\in\calT_h}\int_E \bbeta \cdot  \nabla  \left(\frac{v_h^2}{2}\right)
	=-\sum_{E\in\calT_h}\int_E  \mathrm{div}(\bbeta) \frac{v_h^2}{2} +\sum_{E\in\calT_h}\int_{\partial E}  \bbeta \cdot \bn_E \frac{v_h^2}{2}.
	\end{equation*}
	Applying the DG magic formula \eqref{DG_magic} on the last term with $\bw=\bbeta$ and $\varphi=v_h^2$, using the formula 
	$\frac12\mvl{\bbeta} \cdot \jmp{v_h^2}=\mvl{\bbeta v_h} \cdot \jmp{v_h}-\frac14\jmp\bbeta|\jmp{v_h}|^2$
	on each interior facet, and observing that $\jmp{\bbeta}=0$ on $F\in\calF_h^{\mathrm I}$ by the regularity assumption \eqref{assumptionbeta}, we get
	\begin{equation*}
	\sum_{E\in\calT_h}\int_{\partial E}  \bbeta \cdot \bn_E \frac{v_h^2}{2}=
	\sum_{F\in\calF_h^{\mathrm I}}\int_F\mvl{\bbeta v_h} \cdot \jmp{v_h}+\frac12\sum_{F\in\calF_h^{\mathrm B}}\int_F\bbeta \cdot\bn v_h^2.
	\end{equation*}
	Combining the previous steps, the bilinear form $\calA_h^\mathrm{ar}(v_h,v_h)$ can be rewritten as follows:
	\begin{align*}
	\sum_{E\in\calT_h}	\int_E  \bigg(\sigma+\frac{\mathrm{div}\bbeta}{2}\bigg) v_h^2 
	-\frac12\sum_{F\in\calF_h^-}\int_F\bbeta \cdot\bn v_h^2
	+\frac12\sum_{F\in\calF_h^+}\int_F\bbeta \cdot\bn v_h^2+
	\frac12 \sum_{F\in\calF_h^{\mathrm I}}	\int_F  |\bbeta\cdot \bn_F|\jmp{v_h}^2.
	\end{align*}
	Recalling the definition \eqref{inflowoutflow} of $\Gamma_\pm$ and using the lower bound \eqref{assumptiondiv} on $\sigma+\frac12\mathrm{div}\bbeta$,
	we deduce that
	\begin{equation*}
	\calA_h^{\mathrm{ar}}(v_h,v_h)\ge
	\sigma_0\N{v_h}_{L^{2}(\Omega)}^2+
	\frac12 \sum_{F\in\calF_h}	\int_F  |\bbeta\cdot \bn_F|\jmp{v_h}^2
	=\vertiii{v_h}_\mathrm{ar}^2,
	\end{equation*}
	hence the coercivity constant for the advection--reaction bilinear form is $1$.  
	Since $\calA^\mathrm{dar}_h=\calA^{\mathrm d}_h+\calA^\mathrm{ar}_h$ and 
	$\vertiii{\cdot}_\mathrm{dar}^2=\vertiii{\cdot}_{\mathrm d}^2+\vertiii{\cdot}_\mathrm{ar}^2$, we obtain the discrete coercivity \eqref{eq:coercive}.
	The discrete coercivity implies the well-posedness of the discrete DG problem  \eqref{variational} since it is a sufficient condition for discrete stability \cite[Lemma~1.30]{di2011mathematical}.

	\textbf{Consistency}:
	Let $u\in V^*$ be the weak solution of problem \eqref{eq:BVP}. 
	We show that $u$ satisfies the variational problem \eqref{variational}, i.e.
	$
	\calA_h^{\mathrm{dar}}(u,v_h)=L_h(v_h)$ for all $v_h \in V_h.
	$
	We multiply \eqref{eq} by $v_h\in V_h$, integrate by parts on each element $E$ and sum over all the elements:
	\begin{equation*}
	-\sum_{E\in\calT_h}	\int_E \left(-\bK  \nabla u+\bbeta u\right) \cdot \nabla  v_h +
	\sum_{E\in\calT_h}	\int_{\partial E}   \left(-\bK  \nabla u+\bbeta u\right) \cdot \bn_E v_h +
	\sum_{E\in\calT_h}	\int_E  \sigma u  v_h= 	\sum_{E\in\calT_h}\int_E f  v_h.
	\end{equation*}
	Using the DG magic formula \eqref{DG_magic}  with $\bw=-\bK\nabla u+\bbeta u$ and $\varphi=v_h$ on the second term and observing that $\jmp{-\bK\nabla u+\bbeta u} =0 $ on each interior facet since $	-\bK \nabla u +\bbeta u $ belongs to $H(\mathrm{div};\Omega)$, and using the Dirichlet and Neumann boundary conditions \eqref{Dirichlet}--\eqref{Neumann}, we find
	\begin{align*}
	&	-\sum_{E\in\calT_h}	\int_E \left(-\bK  \nabla u+\bbeta u\right) \cdot \nabla  v_h -
	\sum_{F\in\calF_h^{\mathrm I}}	\int_F  \mvl{ \bK   \nabla u } \cdot \jmp{v_h} +
	\sum_{F\in\calF_h^{\mathrm I}\cup\calF_h^+} 
	\int_F  \mvl{\bbeta  u} \cdot \jmp{v_h}	\\&
	-	\sum_{F\in\calF_h^{\mathrm D}}	\int_F  \bK   \nabla u \cdot \bn v_h+\sum_{E\in\calT_h}	\int_E  \sigma u  v_h= \int_{\Omega} f  v_h - \sum_{F\in\calF_h^\mathrm{N}}	\int_F g_{\mathrm N} v_h   
	- \sum_{F\in\calF_h^-}	\int_F  ( \bbeta  g_{\mathrm D})\cdot \bn v_h .
	\end{align*}
	Using the fact that $\jmp{u}=0$ on each interior facet since $u\in H^{1}(\Omega)$, and that $u$ satisfies the Dirichlet boundary condition \eqref{Dirichlet}, the variational formulation \eqref{variational} evaluated in $u$ coincides with the above equality, 
	implying the consistency of the DG scheme.
	
	\textbf{Boundedness}:
	Let $(v,w_h)\in V_{*h}\times V_h$. We decompose the bilinear form $\calA_h^\mathrm{dar}$ in eight terms:
	\begin{align*}
	\calA_h^\mathrm{dar}(v,w_h)=&
	\sum_{E\in\calT_h}	\int_E \bK  \nabla v \cdot \nabla  w_h +\sum_{F\in\calF_h^{\mathrm I}\cup\calF_h^{\mathrm D}}	\gamma\frac{K_F}{h_F}\int_F  \jmp{v}\cdot \jmp{w_h}-
	\sum_{F\in\calF_h^{\mathrm I}\cup\calF_h^{\mathrm D}}	\int_F  \mvl{\bK\nabla v } \cdot \jmp{w_h}  \\&
	-\sum_{F\in\calF_h^{\mathrm I}\cup\calF_h^{\mathrm D}}	\int_F  \jmp{v}\cdot  \mvl{ \bK   \nabla w_h }
	+\sum_{E\in\calT_h}	\int_E  (-(\bbeta v) \cdot  \nabla  w_h + \sigma  v w_h)\\
	&+\sum_{F\in\calF_h^{\mathrm I}}	\int_F  \mvl{\bbeta  v} \cdot \jmp{w_h}+
	\frac12 \sum_{F\in\calF_h^{\mathrm I}}	\int_F  |\bbeta\cdot \bn_F|\jmp{v}\cdot\jmp{w_h}  +
	\sum_{F\in\calF_h^+}
	\int_F ( \bbeta  v) \cdot \bn_F w_h\\
	=:&\mathfrak{T}_1+\mathfrak{T}_2+\mathfrak{T}_3+\mathfrak{T}_4+\mathfrak{T}_5+\mathfrak{T}_6+\mathfrak{T}_7+\mathfrak{T}_8.
	\end{align*}
	The Cauchy--Schwarz inequality and the ellipticity condition \eqref{ellipticity} yield
	\begin{align*}
	\abs{\mathfrak{T}_1 + \mathfrak{T}_2} 
	\leq&\ \vertiii{v}_{\mathrm d} 	\vertiii{w_h}_{\mathrm d},\\
	\abs{\mathfrak{T}_7 + \mathfrak{T}_8} 
	\leq&\ 2 \vertiii{v}_\mathrm{ar}\vertiii{ w_h}_\mathrm{ar},\\
	\abs{\mathfrak{T}_5} 
	\leq&\ \frac{\N{\bbeta}_{L^{\infty}(\Omega)}} {\sqrt{\kmin  \sigma_0}}\vertiii{v}_\mathrm{ar}
	\vertiii{w_h}_{\mathrm d}+
	\frac{\N{\sigma}_{L^{\infty}(\Omega)} }{\sigma_0}\vertiii{v}_\mathrm{ar}\vertiii{ w_h}_\mathrm{ar}.
	\end{align*}
	Moreover, using the continuity of $\bbeta$ \eqref{assumptionbeta} and the Cauchy--Schwarz inequality, we infer
	\begin{align*}
	\abs{\mathfrak{T}_6} &\leq  
	\bigg(2\sum_{F\in\calF_h^{\mathrm I}}\int_F \abs{\bbeta\cdot\bn_F}\mvl{v}^2\bigg)^{\frac12}\ 
	\bigg(\frac12 \sum_{F\in\calF_h^{\mathrm I}}\int_F \abs{\bbeta\cdot\bn_F} \jmp{w_h}^2\bigg)^{\frac12}\\
	&\leq  \bigg(  \sum_{E\in\calT_h} \N{\bbeta}_{L^{\infty}(E)}\N{v}^2_{L^2(\partial E)} \bigg)^{\frac12}\  \vertiii{ w_h}_\mathrm{ar}
	\leq 
	\vertiii{v}_{\mathrm{dar},*}\vertiii{ w_h}_\mathrm{ar},        
	\end{align*}
	where in the second step we use the formula $2\mvl{v}^2=\frac12(v_1+v_2)^2\leq v_1^2+v_2^2$. 
	
	Since $h_F\leq h_E$ for all $F\in\calF_{E}$, $E\in\calT_h$, and $\kmin \leq K_F$ for all $F\in\calF_h$, from to the bound \eqref{consistencytermbound} we get
	$\abs{\mathfrak{T}_3}\leq
	\big(\frac{\N{\bK}_{L^{\infty}(\Omega)}}{\gamma\kmin}\big)^{\frac12}
	\vertiii{v}_{\mathrm{dar},*}\vertiii{w_h}_{\mathrm d}.$
	Finally, we control the remaining term using bound~\eqref{consistencyterm}:
	$\abs{\mathfrak{T}_4}\leq\big(\frac{\gamma_0}{\gamma}\big)^\frac12\vertiii{v}_{\mathrm d}\vertiii{w_h}_{\mathrm d}
	\leq\vertiii{v}_{\mathrm d}\vertiii{w_h}_{\mathrm d}$.
	By combining all these bounds we infer the boundedness of $\calA_h^\mathrm{dar}$ with 
	$M$ as in the statement.
	
	Since discrete stability, consistency and boundedness hold, we conclude applying 
	\cite[Thm.~1.35]{di2011mathematical}.
	\end{proof}
	
	Given the quasi-optimality inequality \eqref{quasioptimality}, the convergence of the DG method follows studying
	the approximation properties of the particular discrete space $V_h$ chosen.
	In \Cref{th:finalerror} we do this for the $h$-convergence of the quasi-Trefftz version of the DG scheme.
	
	Among all the constants and the parameters appearing in \Cref{th:errorestimate}, only the maximal number of facets per element $N_\partial$ \eqref{Npartial} and 
	the star-shapedness parameter $r_\star$ \ref{it:starshaped} depend on the mesh $\calT_h$, and both are
	easily computed. 
	The dependence on the polynomial degree $p$ is explicit.
	\begin{rem}\label{rem::Peclet}
    The quasi-optimal estimate \eqref{quasioptimality} is not entirely satisfactory because the continuity constant $M$ has an unfavorable dependence on the dimensionless quantity  $\frac{\N{\bbeta}_{L^{\infty}(\Omega)}} {\sqrt{\kmin  \sigma_0}}$,
    which may lead to a non-robust error bound in the advection-dominated regime. 
	In the standard DG analysis, robustness is achieved using the ``boundedness on orthogonal subscales'' \cite[\S2.3.2 and \S4.6.3]{di2011mathematical} for the treatment of the term $\mathfrak{T_5}$, while in this setting we cannot rely on this argument since the (piecewise) partial derivatives of a quasi-Trefftz function of degree $p$ does not necessarily belong to the quasi-Trefftz space of the same degree.
	However, numerically we do not observe a significant difference between the standard DG method and the quasi-Trefftz DG method as the problem becomes increasingly advection-dominated, implying that our estimate is likely to be non-sharp in this limit, see Figure \ref{fig:nu}.
	\end{rem}

	\section{Quasi-Trefftz DG discretization}\label{s:QTDGdiscretizazion}
	We fix a point $\bx^E\in E$ for each mesh element $E\in\calT_h$.
	Since the diffusion--advection--reaction operator $\calL$, defined in \eqref{daroperator}, is a linear partial differential operator of order $m=2$, the quasi-Trefftz space \eqref{inhomQT} for the equation $\calL u = f$ on a mesh element $E\in\calT_h$ is 
	\begin{equation}\label{inhomQT_dar}
	\QT^p_f(E)=\big\{ v\in \IP^p(E) \mid D^{\bi} \mathcal{L}v (\bx^E)=D^{\bi} f (\bx^E) \quad
	\forall \bi\in \IN_0^d, \;|\bi|\leq p-2\big\}, \quad  p\in \IN.
	\end{equation}
	For $p=1$ we have $\QT^1_f(E)=\IP^1(E)$, so we fix $p\ge2$.
	Recall \eqref{assalg}: the non-degeneracy condition \eqref{hpcoeff} is ensured by ellipticity \eqref{ellipticity}.
	The space $\QT^p_f(E)$ is well-defined if the PDE coefficients $\bK$, $\bbeta$ and $\sigma$ and the source term $f$ are sufficiently smooth. 
	We expand the operator $\calL v=\mathrm{div}(-\bK\nabla v+\bbeta v)+\sigma v$ in the form \eqref{linearop} using the Leibniz product rule:
	\begin{align*}
	\mathcal{L}v =\sum_{j=1}^{d} \bigg[\sum_{m=1}^{d}  \big(-\bK_{jm}  D^{\be_{j}+\be_{m}} v-D^{\be_{j}}\bK_{jm}D^{\be_{m}} v \big) +	
	\bbeta_{j} 	 D^{\be_{j}} v+(D^{\be_j}\bbeta_j) v \bigg]+\sigma v.
	\end{align*}
	Recalling the regularity hypothesis \eqref{eq:Cpm} made for general differential operators, assume
	\begin{equation}\label{eq:RegularityQTDG}
	\bK\in C^{p-2} (E)^{d\times d}, \qquad
	\bm{\mathrm{div}}\bK,\ 
	\bbeta\in C^{p-2}(E)^d,\qquad
	\mathrm{div}\bbeta,\ \sigma,\ f\in C^{p-2}(E).
	\end{equation}
	where the matrix divergence $\bm{\mathrm{div}}\bK$ is taken column-wise.
	Then the quasi-Trefftz space \eqref{inhomQT_dar} for the diffusion--advection--reaction equation is well-defined and all the results in \cref{s:QT} apply.
	The detailed description of \Cref{Algo:general} for the homogeneous diffusion--advection--reaction equation, for the case $d=1$, $d=2$, and for the general $d$-dimensional case, can be found in \cite[sec.~5.5]{perinati2023quasitrefftz}.
	
	We discretize the DG formulation \eqref{variational} choosing as trial space the global quasi-Trefftz space 
	$\QT^p_f(\calT_h):=\{v\in L^2(\Omega) \mid v_{|_T} \in \QT^p_f(E)  \; \forall E \in \calT_h \}$ and as test space the global quasi-Trefftz space 
	$\QT^p_0(\calT_h):=\{v\in L^2(\Omega) \mid v_{|_T} \in \QT^p_0(E)  \; \forall E \in \calT_h \}$.
	The quasi-Trefftz DG method is then: 
	\begin{align}\label{qtvariational}
	\text{Find } u_h \in \QT^p_f(\calT_h) \text{ such that }\quad \calA_h^{\mathrm{dar}}(u_h,v_h)=L_h(v_h) 
	\qquad \forall v_h \in \QT^p_0(\calT_h).
	\end{align}
	If the source term $f$ vanishes, existence and uniqueness of $u_h$ in \eqref{qtvariational} follow from 
	\Cref{th:errorestimate}.
	However, in the general case with $f\neq 0$, 
	\Cref{th:errorestimate} does not apply directly,
	since the trial and test spaces are different.
	In this case, we choose a lifting $u_{h,f}\in\QT^p_f(\calT_h)$.
	This can be computed by applying \Cref{Algo:general} in each element $E\in\calT_h$, with any choice of Cauchy data $(\psi_0,\psi_1)\in\IP^p(\IR^{d-1})\times\IP^{p-1}(\IR^{d-1})$.
	For simplicity, in the 
	experiments of \cref{s:numexp} we take $\psi_0=\psi_1=0$.
	Then we consider the problem
	\begin{align}\label{qtvariational2}
	\begin{split}
    & \text{Find } u_{h,0} \in \QT^p_0(\calT_h) \text{ s.t. } \quad
	 \calA_h^{\mathrm{dar}}(u_{h,0},v_h)=L_h(v_h)-\calA_h^{\mathrm{dar}}(u_{h,f},v_h) \qquad \forall v_h \in \QT^p_0(\calT_h).
	\end{split}
	\end{align}
	Here trial and test spaces coincide and \Cref{th:errorestimate} applies, so problem \eqref{qtvariational2} admits a unique solution $u_{h,0}$.
	Then $u_h=u_{h,0}+u_{h,f}$ is the solution to \eqref{qtvariational}.

	\subsection{\texorpdfstring{$h$}{h}-convergence of the quasi-Trefftz DG method}
	\label{s:hConvergence}
	
	The aim of this section is to infer the convergence rate in $h$ for the quasi-Trefftz Galerkin error $u-u_h$ measured in the $\N{\cdot}_\mathrm{dar}$-norm.	
	We first adapt the DG stability analysis of \cref{s:WellP} to problem \eqref{qtvariational}, which is posed on an affine trial space.
	
	\begin{theorem}\label{th:qtdg-wellposed}
	Under the assumptions of \Cref{th:errorestimate}
	and \eqref{eq:RegularityQTDG}, problem \eqref{qtvariational} is well-posed and 
	the following error estimate holds true:
	\begin{equation}\label{qtquasioptimality}
	\vertiii{u-u_h}_\mathrm{dar}\leq \left(1+\frac{M}{\alpha}\right) \inf_{v_h\in \QT^p_f(\mathcal T_h)}\vertiii{u-v_h}_{\mathrm{dar},*}.
	\end{equation}
	\end{theorem}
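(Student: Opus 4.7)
The proof strategy splits the result into two parts: well-posedness of the affine problem \eqref{qtvariational}, and the quasi-optimality estimate. The key structural observation is that the trial space $\QT^p_f(\calT_h)$ is an affine space whose underlying linear space is exactly the test space $\QT^p_0(\calT_h)$, as noted after \Cref{Algo:general}. Crucially, \Cref{th:errorestimate} is formulated for an arbitrary polynomial subspace $V_h\subset\IP^p(\calT_h)$, and $\QT^p_0(\calT_h)$ falls in this class.

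For well-posedness, I would follow the reduction indicated in the paper. Using \Cref{Algo:general} elementwise with any fixed Cauchy data (e.g.\ $\psi_0=\psi_1=0$), we construct a lifting $u_{h,f}\in \QT^p_f(\calT_h)$; assumption \eqref{eq:RegularityQTDG} guarantees the required derivatives of the PDE coefficients and source at the points $\bx^E$. Problem \eqref{qtvariational2} is then posed with matching trial and test space $V_h=\QT^p_0(\calT_h)\subset\IP^p(\calT_h)$, so \Cref{th:errorestimate} applies directly and yields existence and uniqueness of $u_{h,0}$. Setting $u_h:=u_{h,0}+u_{h,f}\in\QT^p_f(\calT_h)$ provides a solution of \eqref{qtvariational}. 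Uniqueness for the original affine problem follows from the fact that the difference of two solutions belongs to $\QT^p_0(\calT_h)$ and satisfies the homogeneous equation, hence vanishes by discrete coercivity \eqref{eq:coercive}.

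For the quasi-optimality estimate \eqref{qtquasioptimality}, I would use a standard Strang/C\'ea-type argument, the only twist being that we are allowed to test only against elements of $\QT^p_0(\calT_h)$. Pick any $v_h\in\QT^p_f(\calT_h)$. By the triangle inequality,
$$
\vertiii{u-u_h}_\mathrm{dar}\le \vertiii{u-v_h}_\mathrm{dar}+\vertiii{v_h-u_h}_\mathrm{dar}
\le \vertiii{u-v_h}_{\mathrm{dar},*}+\vertiii{v_h-u_h}_\mathrm{dar}.
$$
The key point is that $v_h-u_h\in \QT^p_0(\calT_h)$, so it is an admissible test function. Applying coercivity \eqref{eq:coercive} on $v_h-u_h$, consistency (which holds for the exact solution $u$ independently of the discrete space), and boundedness of $\calA_h^\mathrm{dar}$ on $V_{*h}\times V_h$ with $V_h=\QT^p_0(\calT_h)$, we obtain
$$
\alpha\vertiii{v_h-u_h}_\mathrm{dar}^2
\le \calA_h^\mathrm{dar}(v_h-u_h,v_h-u_h)
= \calA_h^\mathrm{dar}(v_h-u,v_h-u_h)
\le M\vertiii{u-v_h}_{\mathrm{dar},*}\vertiii{v_h-u_h}_\mathrm{dar},
$$
so $\vertiii{v_h-u_h}_\mathrm{dar}\le (M/\alpha)\vertiii{u-v_h}_{\mathrm{dar},*}$. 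Combining and taking the infimum over $v_h\in \QT^p_f(\calT_h)$ yields \eqref{qtquasioptimality}.

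There is no serious obstacle: the affine-versus-linear mismatch between trial and test spaces is the only delicate feature, and it is resolved by the two observations that (i) $\QT^p_f(\calT_h)-\QT^p_f(\calT_h)=\QT^p_0(\calT_h)$, so differences of trial functions are legitimate test functions, and (ii) the constants $\alpha$ and $M$ of \Cref{th:errorestimate} depend on the mesh, the PDE data and the polynomial degree $p$ but not on the specific subspace of $\IP^p(\calT_h)$ chosen. Consistency is inherited from the exact solution $u\in V_*$, and boundedness is proved for general $(v,w_h)\in V_{*h}\times V_h$, so both hypotheses transfer verbatim.
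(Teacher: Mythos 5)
Your proposal is correct and follows essentially the same route as the paper: construct the lifting $u_{h,f}$, reduce to the linear problem \eqref{qtvariational2} where \Cref{th:errorestimate} applies, and then run a C\'ea-type argument using that $u_h-v_h\in\QT^p_0(\calT_h)$ is an admissible test function. The only (immaterial) difference is in the uniqueness step, where you invoke coercivity on the difference of two solutions while the paper argues via the equality of dimensions of trial and test spaces, i.e.\ squareness of the linear system.
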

	\begin{proof}
	Under the assumptions made, \Cref{prop:Uniqueness} ensures the existence of $u_{h,f}\in\QT_f^p(\calT_h)$.
	\Cref{th:errorestimate} implies the existence of $u_{h,0}$ solving \eqref{qtvariational2}, and so of $u_h=u_{h,0}+u_{h,f}$ solving \eqref{qtvariational}.
	The uniqueness of $u_h$ follows because \eqref{qtvariational} is a square discrete linear problem as 
	$\dim(\QT^p_f(\calT_h))=\dim(\QT^p_0(\calT_h))$.

	To show \eqref{qtquasioptimality}, we adapt C\'ea lemma to the affine space in \eqref{qtvariational}.
	For any $v_h\in \QT^p_f(\mathcal T_h)$, $u_h-v_h\in \QT^p_0(\mathcal T_h)$ and therefore 
	$\calA_h^\mathrm{dar}(u-u_h, u_h-v_h)=0$ by \eqref{qtvariational} and the consistency of the scheme.
	Coercivity and continuity of the DG formulation yield the estimate
	\begin{align*}  
	\alpha \vertiii{u_h-v_h}_\mathrm{dar}^2 &\leq \calA_h^\mathrm{dar}(u_h-v_h,u_h-v_h)\\
	&=\calA^\mathrm{dar}_h(u-v_h,u_h-v_h)
	\leq M\vertiii{u-v_h}_{\mathrm{dar},*} \vertiii{u_h-v_h}_\mathrm{dar} \qquad \forall v_h\in\QT^p_f(\calT_h). 
	\end{align*}
	Estimate \eqref{qtquasioptimality} follows by applying the triangle inequality and recalling that 
	$\vertiii{\cdot}_\mathrm{dar}\le\vertiii{\cdot}_{\mathrm{dar},*}$.
	\end{proof}

	From this quasi-optimality result we deduce the optimal convergence rate for the quasi-Trefftz DG method, using the approximation estimate \eqref{eq:Approx}.
	We define the broken space $C^{q}(\calT_h):=\{v \in L^{2}(\Omega) \mid v_{|_E} \in C^{q}(E)  \quad \forall E \in \calT_h \}$ for $q\in\IN_0$ and recall that $V_{*} :=H^1(\Omega) \cap H^2(\calT_h)$ 
	
	\begin{theorem}[Quasi-Trefftz DG convergence rate]\label{th:finalerror}
	Let $p\in \IN$ and let $u\in V_{*}\cap C^{p+1}(\calT_h)$ solve the BVP \eqref{eq:BVP} under the assumptions made in \cref{s:ProblemDAR} and \eqref{eq:RegularityQTDG}.
	Let $u_h$ solve \eqref{qtvariational}, with a mesh $\calT_h$ as in \cref{s:Mesh}, and
	penalty parameter $\gamma$ as in \Cref{th:errorestimate}.
	Assume that each mesh element $E$ is star-shaped with respect to $\bx^E$.
	Then, the following error bound holds:
	\begin{align}\label{eq:finalerror}
	\vertiii{u-u_h}_\mathrm{dar}\leq&\ 
	\bigg(1+\frac{M}{\alpha}\bigg)\frac{d^p}{p!}
	\bigg(\sum_{E\in \calT_h} G_E \abs{E}h_E^{2p}\abs{u}^2_{C^{p+1}(E)}\bigg)^{\frac12}\\
	\le &\ 
	\bigg(1+\frac{M}{\alpha}\bigg)\frac{d^p}{p!}|\Omega|^\frac12 \; h^p \; 
	\max_{E\in\calT_h} \Big(G_E^\frac12\abs{u}_{C^{p+1}(E)}\Big)
	,\qquad \text{where}
	\nonumber\\
	G_E:=\biggl[\bigg(1+&\frac{d}{r_\star}\bigg)\N{\bK}_{L^{\infty}(E)} 
	+\frac{d^{2}}{(p+1)^2}\left(2\Cg  \gamma \N{\bK}_{L^{\infty}(\calP_E)}\frac{d}{r_\star}
	+2\N{\bbeta}_{L^{\infty}(E)} \frac{d}{r_\star} h_E+\sigma_0 h_E^2
	\right)\biggr],
	\nonumber
	\end{align}
	with $\alpha$ and $M$ as in \Cref{th:errorestimate}, $r_\star,\Cg$ in \ref{it:starshaped}--\ref{it:Graded}, $\sigma_0$ in \eqref{assumptiondiv} and 
	$\calP_E:=E\cup\bigcup_{F=\partial E\cap\partial E'\in\calF_h^{\mathrm I}}E'$  
	the patch of mesh elements adjacents to the element $E$.
	\end{theorem}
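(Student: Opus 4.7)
The starting point is the quasi-optimality estimate \eqref{qtquasioptimality} from \Cref{th:qtdg-wellposed}. The plan is to take as competitor the elementwise Taylor polynomial $v_h|_E := \mathsf{T}^{p+1}_{\bx^E}[u]$, which by \Cref{th:Approx} belongs to $\QT^p_f(E)$ thanks to $\calL u=f$ and the star-shapedness of $E$ with respect to $\bx^E$. It then remains to bound the six terms of $\vertiii{u-v_h}_{\mathrm{dar},*}^2$ on each element and to sum. The main ingredient is the approximation bound \eqref{eq:Approx} applied with $q=0$ and $q=1$, giving respectively
\[
\|u-v_h\|_{C^0(E)}^2\ \leq\ \left(\tfrac{d^{p+1}}{(p+1)!}\right)^{\!2} h_E^{2p+2}\,|u|^2_{C^{p+1}(E)},\qquad |u-v_h|_{C^1(E)}^2\ \leq\ \left(\tfrac{d^{p}}{p!}\right)^{\!2} h_E^{2p}\,|u|^2_{C^{p+1}(E)}.
\]

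First I would handle the two gradient-type contributions to $\vertiii{\cdot}^2_{\mathrm{dar},*}$, namely $\int_E\bK\nabla(\cdot)\cdot\nabla(\cdot)$ and the trace term $h_E\N{\bK^{\frac12}\nabla(\cdot)\cdot\bn_E}^2_{L^2(\partial E)}$. Using $\bK\bxi\cdot\bxi\leq\N{\bK}_{L^\infty(E)}|\bxi|^2$ together with the elementary bounds $\N{w}^2_{L^2(E)}\leq|E|\N{w}^2_{C^0(E)}$ and $\N{w}^2_{L^2(\partial E)}\leq|\partial E|\N{w}^2_{C^0(E)}$, both terms reduce to multiples of $|u-v_h|^2_{C^1(E)}$. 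The boundary measure is then exchanged for a volume measure via the chunkiness inequality $h_E|\partial E|\le\frac{d}{r_\star}|E|$ from \Cref{lemma:ChunkyStar} (applicable with $\rho_\star=r_\star$ by \ref{it:starshaped}). This produces exactly the coefficient $\bigl(1+\tfrac{d}{r_\star}\bigr)\N{\bK}_{L^\infty(E)}$ in $G_E$, multiplying $\bigl(\tfrac{d^p}{p!}\bigr)^2 h_E^{2p}|E|\,|u|^2_{C^{p+1}(E)}$.

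Next I would treat the four $C^0$-controlled contributions: the interior/Dirichlet jump penalty $|u-v_h|_{\mathrm J}^2$, the upwind jump term, the reaction $L^2$ volume norm, and the $\bbeta$-weighted trace norm. Since $u\in V_*\subset H^1(\Omega)$ gives $\jmp u=0$ on all interior facets, we have $\jmp{u-v_h}=-\jmp{v_h}$, so every facet integrand is bounded by a sum of one-sided values $\N{u-v_h}^2_{C^0(E)}$ on the adjacent elements. I would then re-index $\sum_{F\in\calF_h}$ as $\sum_{E\in\calT_h}\sum_{F\in\calF_E}$ (picking up a factor at most $2$ for interior facets), insert $|F|\leq|\partial E|$, apply chunkiness to obtain $\frac{d|E|}{r_\star h_E}$, and invoke the graded-mesh hypothesis \eqref{gradedmesh} to substitute $1/h_F\to C_g/h_E$ in the penalty term while using $K_F\leq\N{\bK}_{L^\infty(\calP_E)}$ on the patch. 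Rewriting $\bigl(\tfrac{d^{p+1}}{(p+1)!}\bigr)^2=\bigl(\tfrac{d^p}{p!}\bigr)^2\tfrac{d^2}{(p+1)^2}$ and tracking the residual powers $h_E^{2p+2}$, $h_E^{2p+1}$, $h_E^{2p}$ yields, respectively, the $\sigma_0 h_E^2$ summand, the $2\N{\bbeta}_{L^\infty(E)}\tfrac{d}{r_\star}h_E$ summand, and the $2C_g\gamma\N{\bK}_{L^\infty(\calP_E)}\tfrac{d}{r_\star}$ summand inside the bracket multiplied by $\tfrac{d^2}{(p+1)^2}$ in $G_E$.

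Summing the six contributions over $E\in\calT_h$ and factoring out $\bigl(\tfrac{d^p}{p!}\bigr)^2$ gives the first inequality in \eqref{eq:finalerror}, while the second inequality follows by extracting $\max_E G_E^{1/2}|u|_{C^{p+1}(E)}$ and using $\sum_E|E|\leq|\Omega|$. I expect the main obstacle to be the bookkeeping in Step 3: correctly identifying the two-sided contributions to each facet integral, matching the factor of two from double-counting interior facets with the constant $2C_g$ in $G_E$, and ensuring that $\N{\bK}_{L^\infty(\calP_E)}$ rather than $\N{\bK}_{L^\infty(E)}$ arises from the penalty weight $K_F$ on facets $F\in\calF_E$ shared with a neighbour. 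Everything else is a mechanical combination of \Cref{th:Approx} with \Cref{lemma:ChunkyStar} and the graded-mesh condition.
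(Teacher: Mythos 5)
Your proposal is correct and follows essentially the same route as the paper's proof: start from the quasi-optimality estimate \eqref{qtquasioptimality}, bound the six contributions to $\vertiii{\cdot}_{\mathrm{dar},*}^2$ elementwise via $C^0$/$C^1$ bounds from \eqref{eq:Approx} (the paper takes the infimum directly rather than naming the Taylor polynomial as competitor, but \eqref{eq:Approx} is exactly that bound), and convert facet sums to element sums using the chunkiness inequality \eqref{eq:chunky}, the graded-mesh condition \eqref{gradedmesh}, and $K_F\le\N{\bK}_{L^\infty(\calP_E)}$. The bookkeeping you anticipate as the main obstacle is carried out in the paper exactly as you describe, yielding the same constant $G_E$.
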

	\begin{proof}     
	We estimate the quantity
	$\inf_{v_h\in \QT^p_f(\calT_h)}\vertiii{u-v_h}_{\mathrm{dar},*}$ on the right-hand side of the quasi-optimality inequality \eqref{qtquasioptimality}, with the $\vertiii{\cdot}_{\mathrm{dar},*}$-norm defined in \cref{s:Norms} for  $v\in V_{*h}=V_*+\QT^p_f(\calT_h)$.
	We use $\jmp{v}^2=(v_1-v_2)^2\leq 2(v_{1}^2+v_2^2)$ on internal facets $F=\partial E_1\cap \partial E_2$,
	and $\jmp{v}^2=v^2$ on boundary facets $F$. 
	We recall $h_E\le\Cg h_F$ for $F\in\calF_E$ by the graded-mesh assumption \eqref{gradedmesh}, and that $K_F\leq \N{\bK}_{L^{\infty}(\calP_E)}$ for all facets $F\in \calF_E
	$. 
	Using these facts, we rearrange the sums over parts of the mesh skeleton as sums over elements and obtain the bound:
	\begin{align*}
	\vertiii{v}^2_{\mathrm{dar},*}\leq
	\sum_{E\in \calT_h} \bigg(&\N{\bK}_{L^{\infty}(E)} \N{\nabla v}^2_{L^2(E)} +2 \Cg \frac{\gamma}{h_E} \N{\bK}_{L^{\infty}(\calP_E)}\N{v}^2_{L^2(\partial E)}
	+\sigma_0\N{v}^2_{L^2(E)}
	\\&
	+\N{\bK}_{L^{\infty}(E)}h_E\N{\nabla v}^2_{L^2(\partial E)}
	+2\N{\bbeta}_{L^{\infty}(E)}\N{v}^2_{L^2(\partial E)}\bigg).
	\end{align*}
	Next, we use the definition of the $\N{\cdot}_{C^{m}}$-norms and obtain
	\begin{align*}
	\vertiii{v}^2_{\mathrm{dar},*}\leq
	\sum_{E\in \calT_h}	\bigg(&\N{\bK}_{L^{\infty}(E)}\abs{E} \N{\nabla v}^2_{C^0(E)} +
	2\Cg \frac{\gamma}{h_E} \N{\bK}_{L^{\infty}(\calP_E)}\abs{\partial E}\N{v}^2_{C^0( E)}
	+\sigma_0\abs{E}\N{v}^2_{C^0(E)}\\&
	+\N{\bK}_{L^{\infty}(E)}\abs{\partial E}h_E\N{\nabla v}^2_{C^0( E)}
	+2\N{\bbeta}_{L^{\infty}(E)}\abs{\partial E}\N{v}^2_{C^0(E)}\bigg).
	\end{align*}
	Considering the quantity of interest and using the quasi-Trefftz approximation estimate \eqref{eq:Approx} we get 
	\begin{align*}
	&\inf_{v_h\in \QT^p_f(\calT_h)}\vertiii{u-v_h}^2_{\mathrm{dar},*}\\
	\leq&
	\sum_{E\in \calT_h}	\inf_{v_h\in \QT^p_f(E)}
	\biggl[\left(\abs{E}+\abs{\partial E}h_E\right)\N{\bK}_{L^{\infty}(E)} \N{\nabla (u-v_h)}^2_{C^0(E)}\\&
	\hspace{25mm}+\left(2\Big( \Cg\frac{\gamma}{h_E}\N{\bK}_{L^{\infty}(\calP_E)}
	+\N{\bbeta}_{L^{\infty}(E)} \Big)\abs{\partial E}
	+\sigma_0\abs{E}
	\right)\N{u-v_h}^2_{C^0(E)}\biggr]
	\\\leq&
	\sum_{E\in \calT_h}	
	\left[\left(\abs{E}+\abs{\partial E}h_E\right)\N{\bK}_{L^{\infty}(E)} 
	\frac{d^{2p}}{(p!)^2} h_E^{2p}\abs{u}^2_{C^{p+1}(E)} \right. \\&
	\left.\hspace{10mm}
	+\left(2\Big( \Cg 	\frac{\gamma}{h_E}\N{\bK}_{L^{\infty}(\calP_E)} +\N{\bbeta}_{L^{\infty}(E)}\Big) \abs{\partial E}+\sigma_0\abs{E}
	\right)
	\frac{d^{2(p+1)}}{((p+1)!)^2} h_E^{2(p+1)}\abs{u}^2_{C^{p+1}(E)} \right].
	\end{align*}
	By the chunkiness property $h_E|\partial E|\le\frac{d}{r_\star}|E|$ \eqref{eq:chunky} on the mesh,
	the last expression is bounded by
	\begin{align*}
	\inf_{v_h\in \QT^p_f(\calT_h)}\vertiii{u-v_h}^2_{\mathrm{dar},*}\leq&\ \frac{d^{2p}}{(p!)^2}
	\sum_{E\in \calT_h}	
	\biggl[\left(1+\frac{d}{r_\star}\right)\abs{E}\N{\bK}_{L^{\infty}(E)} 
	+\frac{d^{2}}{(p+1)^2}\abs{E}h_E^{2}\times \\&
	\left(2	\Cg  \frac{\gamma}{h_E} \N{\bK}_{L^{\infty}(\calP_E)}\frac{d}{r_\star} h_E^{-1}
	+2\N{\bbeta}_{L^{\infty}(E)} \frac{d}{r_\star} h_E^{-1}+\sigma_0\right)\biggr] h_E^{2p}\abs{u}^2_{C^{p+1}(E)}\!.
	\end{align*}
	Combining this bound  with the quasi-optimality inequality \eqref{qtquasioptimality} yields the assertion.
	\end{proof}
	
	The estimate \eqref{eq:finalerror} can immediately be adapted to the case 
	where a different polynomial degree $p_E\in\IN$ is used in each element.
	
	For the quasi-Trefftz DG error estimate \eqref{eq:finalerror} to hold, the solution $u$ needs to belong to $C^{p+1}(\calT_h)$, which is a stronger regularity assumption than the usual $u\in H^{p+1}(\calT_h)$.
	This is a consequence of the approximation estimate \eqref{eq:Approx}, which is based on a Taylor argument.
	For the Trefftz space the analysis has been extended to the case of solutions in $H^{p+1}(\calT_h)$ using the fact that the ``averaged Taylor polynomials'' of exact solutions are Trefftz functions \cite[Lemma~1]{moiola2018space}. 
	However, we cannot use this argument since, in general, averaged Taylor polynomials are not quasi-Trefftz functions and, to our knowledge, a quasi-Trefftz convergence analysis using Sobolev norms is still missing \cite[Rem.~4.7]{IGMS_MC_2021}.
	Apart from this difference, \eqref{eq:finalerror} shows optimal $h$-convergence rates in the $\N{\cdot}_\mathrm{dar}$-norm.

	\section{Numerical experiments}\label{s:numexp}
	
	The quasi-Trefftz DG method has been implemented using \texttt{NGSolve} \cite{ngsolve} and \texttt{NGSTrefftz} \cite{ngstrefftz}\footnote{Reproduction material is available in \cite{repdata}, documentation on\\
	\url{https://paulst.github.io/NGSTrefftz/notebooks/qtelliptic.html}.}.
	The computations were performed with parallelization limited to 16 threads on a server with two Intel(R) Xeon(R) CPU E5-2687W v4, with 12 cores each.
	The derivatives required for the computation of the quasi-Trefftz functions are computed using the symbolic differentiation capabilities of \texttt{NGSolve}, with evaluation of \Cref{Algo:general} performed in parallel elementwise. 
	To initialize the algorithm, we choose the Cauchy data $\psi_0=\psi_1=0$ for constructing the lifting $u_{h,f}\in \QT^p_f(\calT_h)$, and centered monomial bases of $\IP^p(\IR^{d-1})$ and of $\IP^{p-1}(\IR^{d-1})$ as Cauchy data for the quasi-Trefftz basis \eqref{eq:BEp} of $\QT^p_0(\calT_h)$.
	We use a direct solver based on the \texttt{UMFPACK} library.
	The diffusion-dependent penalty parameter $K_F$ is chosen equal to $k_{min}$ on each facet $F\in\calF_h^{\mathrm I}$. 
	Additional experiments and details on a 2D Matlab implementation for the homogeneous case can be found in \cite{perinati2023quasitrefftz}.
	
	\subsection{Non-homogeneous Dirichlet problem}\label{ex2}
	We consider a non-homogeneous diffusion-dominated problem in the unit cube $\Omega=(0,1)^3$.
	The PDE coefficients and the solution are chosen as
	\begin{align}\label{eq:ex2}
	\bK=(1\!+\!x_1\!+\!x_2\!+\!x_3)\bI_3,\ 
	\bbeta=\begin{pmatrix} \sin x_1\\\sin x_2\\\sin x_3\end{pmatrix},\
	\sigma=\frac{4}{1\!+\!x_1\!+\!x_2\!+\!x_3},\ 
	u_{\mathrm{ex}} = \sin\big(\pi(x_1\!+\!x_2\!+\!x_3)\big).
	\end{align}
	Here $\bI_3$ is the $3\times3$ identity matrix.
	The right-hand side $f$ is constructed in order to manufacture the solution $u_{\mathrm{ex}}$ in \eqref{eq:ex2}.
	Dirichlet boundary conditions are imposed on the entire boundary of the domain matching the exact solution.
	We consider a sequence of tetrahedral meshes obtained by refinement of an unstructured quasi-uniform tetrahedral initial mesh.
	The penalization parameters are chosen as $\gamma =50p^2$ and $K_F=\kmin=1$.
	
	In \Cref{fig:numex2} we show the absolute errors of the quasi-Trefftz and the standard (full-polynomial space) DG methods, for the same polynomial degrees $p\in\{2,3,4\}$ and under mesh refinement. 
	We observe that the quasi-Trefftz DG method converges with the expected orders $h^{p+1}$ in the $L^2(\Omega)$ norm and $h^p$ in the $\vertiii{\cdot}_\mathrm{dar}$-norm, the latter in agreement with \Cref{th:finalerror} and both norms matching the convergence rates of the standard DG method.
	The errors of the two methods are similar, but the quasi-Trefftz DG error is slightly larger by a constant factor (within a factor 1.65 for the $\vertiii{\cdot}_\mathrm{dar}$-error for $h<0.5$).
	
	The assembly of the quasi-Trefftz DG linear system has an overhead given by the computation of the basis functions and the particular approximate solution $u_{h,f}$.
	To assess this, in \Cref{tab:ex2} we compare the computing time of the quasi-Trefftz and the full-polynomial version of the DG method. 
	We observe that, as soon as $h$ is sufficiently small or $p$ large, the quasi-Trefftz version requires considerably less time: the basis computation time is offset by the reduced number of degrees of freedom. 
	
	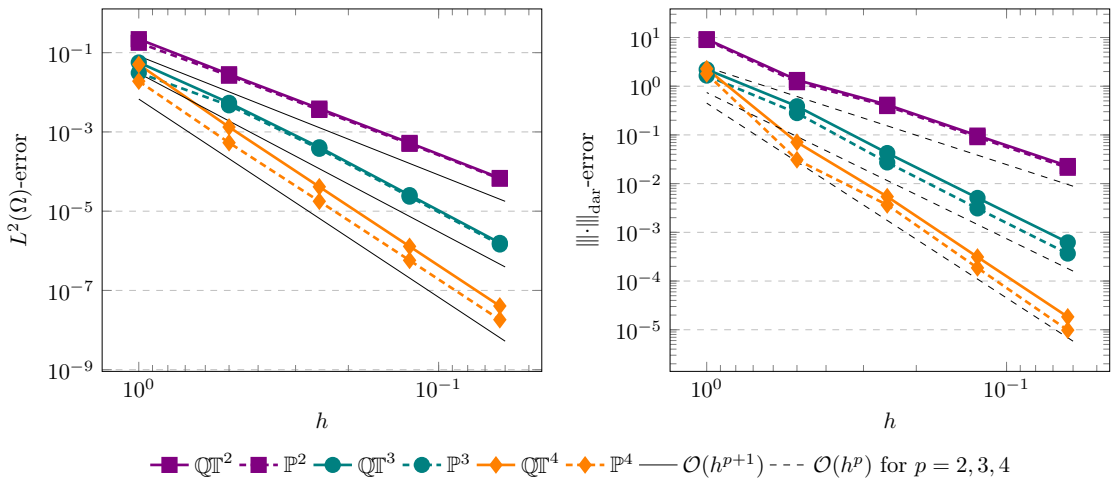
\begin{figure}[ht!]\centering
	\resizebox{.98\linewidth}{!}{
	\begin{tikzpicture}
	\begin{groupplot}[
	group style={
	group name={my plots},
	group size=2 by 1,
	horizontal sep=2cm,
	},
	legend style={
	legend columns=8,
	at={(0.8,-0.2)},
	draw=none
	},
	xmajorgrids=true,
	ymajorgrids=true,
	grid style=dashed,
	cycle list name=paulcolors,
	]      
	\nextgroupplot[ymode=log,xmode=log,x dir=reverse, ylabel={$L^2(\Omega)$-error},xlabel={$h$}]
	\foreach \k in {2,3,4}{
	\addplot+[discard if not={p}{\k},discard if not={qt}{1}] table [x=h, y=l2error, col sep=comma] {qtell3d3t.csv};
	\addplot+[discard if not={p}{\k},discard if not={qt}{0}] table [x=h, y=l2error, col sep=comma] {qtell3d3t.csv};
	}
	\addplot[domain=0.06:1.0] {exp(-3*ln(1/x)-2.5)};
	\addplot[domain=0.06:1.0] {exp(-4*ln(1/x)-3.5)};
	\addplot[domain=0.06:1.0] {exp(-5*ln(1/x)-5.0)};
	\nextgroupplot[ymode=log,xmode=log,x dir=reverse, ylabel={$\vertiii{\cdot}_\mathrm{dar}$-error},xlabel={$h$}]
	\foreach \k in {2,3,4}{
	\addplot+[discard if not={p}{\k},discard if not={qt}{1}] table [x=h, y=dgerror, col sep=comma] {qtell3d3t.csv};
	\addplot+[discard if not={p}{\k},discard if not={qt}{0}] table [x=h, y=dgerror, col sep=comma] {qtell3d3t.csv};
	}
	\addlegendimage{solid}
	\addplot[dashed,domain=0.06:1.0] {exp(-2*ln(1/x)+0.9)};
	\addplot[dashed,domain=0.06:1.0] {exp(-3*ln(1/x)-0.3)};
	\addplot[dashed,domain=0.06:1.0] {exp(-4*ln(1/x)-0.8)};
	\legend{$\QT^2$,$\IP^2$,$\QT^3$,$\IP^3$,$\QT^4$,$\IP^4$,$\mathcal O(h^{p+1})$,{$\mathcal O(h^p)$ for $p=2,3,4$},}
	\end{groupplot}
	\end{tikzpicture}}
    \vspace{-0.5em}
	\caption{
	Error norms for the non-homogeneous problem in the unit cube, with the right-hand side and coefficients chosen to manufacture the solution given in \eqref{eq:ex2}.
	We compare the quasi-Trefftz method ($\QT^p$) to the standard DG method using the full polynomial spaces ($\IP^p$) for 
	polynomial degrees $p=2,3,4$ on the same mesh sequence.
	Reference lines for the optimal convergence rates $\mathcal O(h^{p+1})$ and $\mathcal O(h^p)$ are shown in full and dashed lines, respectively.
	}
	\label{fig:numex2}
	\end{figure}
	\begin{table}[ht!]\centering
	\pgfplotstabletypeset[
	col sep=comma,
	fixed, fixed zerofill=true,
	every head row/.style={before row=\toprule,after row=\midrule},
	every last row/.style={after row=\bottomrule},
	create on use/nhnr/.style={create col/set list={1.0,$2^{-1}$,$2^{-2}$,$2^{-3}$,$2^{-4}$}},
	create on use/nel/.style={create col/set list={$12$,$96$,$768$,$6144$,$49152$}},
	columns={nhnr,nel,totaltime,totaltime,totaltime,totaltime,totaltime,totaltime},
    display columns/0/.style={column name={Meshsize},string type,discard if not={qt}{0},discard if not={p}{2}},
    display columns/1/.style={column name={\#elements},string type,discard if not={qt}{0},discard if not={p}{2}},
	display columns/2/.style={column name={$\QT^2$},discard if not={qt}{1},discard if not={p}{2},column type/.add={>{\columncolor[gray]{.8}}}{}},
	display columns/3/.style={column name={$\IP^2$},discard if not={qt}{0},discard if not={p}{2}},
	display columns/4/.style={column name={$\QT^3$},discard if not={qt}{1},discard if not={p}{3},column type/.add={>{\columncolor[gray]{.8}}}{}},
	display columns/5/.style={column name={$\IP^3$},discard if not={qt}{0},discard if not={p}{3}},
	display columns/6/.style={column name={$\QT^4$},discard if not={qt}{1},discard if not={p}{4},column type/.add={>{\columncolor[gray]{.8}}}{}},
	display columns/7/.style={column name={$\IP^4$},discard if not={qt}{0},discard if not={p}{4}},
	]
	{qtell3d3t.csv}
	\caption{Timings for the non-homogeneous problem described in \eqref{eq:ex2}.
	We compare the quasi-Trefftz method to the standard DG method using the full polynomial spaces; the corresponding errors are plotted in Figure~\ref{fig:numex2}.
	The timings are given in seconds and include the time for setting up the finite element spaces, the assembly, and solving the linear system.
	Mesh generation is excluded.
	}\label{tab:ex2}
	\end{table}
	
	The left panel in Figure \ref{fig:cond} shows how the advantage provided by the quasi-Trefftz approach improves with higher polynomial degrees $p$.
	This seems to confirm the $\exp(-bp^{1/(d-1)})$ behavior of Trefftz and quasi-Trefftz errors, as opposed to the 
	$\exp(-cp^{1/d})$ dependence for methods based on classical polynomial spaces, see \cite[sec.~3.1]{hiptmair2016survey} and \cite[sec.~5.1.3, 5.2.2]{GomezMoiola2024}.
	Note however that we are not aware of any rigorous quasi-Trefftz $p$-convergence result.

    \subsubsection{Conditioning}\label{sec:cond}
	
	We study the condition number for the quasi-Trefftz DG method and the standard DG method. 
	We consider a 2D Dirichlet problem in the unit square $\Omega=(0,1)^2$ with coefficients
	$\bK=(1+x_1+x_2)\bI_2$, $\bbeta=(1,0)^\top$, and $\sigma=\frac{3}{1+x_1+x_2}$, with $\bI_2$ the identity matrix in $\mathbb{R}^{2\times 2}$.
	The right panel of Figure \ref{fig:cond} shows the condition numbers of the matrices for the quasi-Trefftz DG and the standard DG methods.
    For the standard DG they asymptotically approach the rate 
    $\mathcal{O}(h^{-2})$ for all $p\in\IN$, in accordance with the theory \cite{castillo2002performance}, while for the quasi-Trefftz DG the condition numbers grow asymptotically less than $\mathcal{O}(h^{-0.5})$ for all $p\in\IN$.
	However, for increasing values of $p$, the condition number of the quasi-Trefftz DG appears to grow exponentially.
	This is to be expected since we initialize the quasi-Trefftz Cauchy data by monomials.
	The selection of Cauchy data that ensure better-conditioned quasi-Trefftz bases is currently under investigation.
	
	\begin{figure}[ht!]
	\pgfplotscreateplotcyclelist{pcolors}{
	magenta, every mark/.append style={solid,fill=magenta},mark=square*,very thick,mark size=3pt\\
	cyan,every mark/.append style={solid,fill=cyan},mark=*,very thick,mark size=3pt\\
	}
    \begin{center}
	\resizebox{\linewidth}{!}{
	\begin{tikzpicture}
	\begin{groupplot}[
	group style={
	group name={my plots},
	group size=2 by 1,
	horizontal sep=2cm,
	},
	xmajorgrids=true,
	ymajorgrids=true,
	grid style=dashed,
	]      
	\nextgroupplot[
    ymode=log,  ylabel={$\vertiii{\cdot}_\mathrm{dar}$-error},xlabel={$\sqrt{\#\mathrm{DOFs}}$},cycle list name=pcolors]
	\foreach \qt in {0,1}{
	\addplot+[discard if not={h}{0.1},discard if not={qt}{\qt}] table [x=dofs, y=dgerror, col sep=comma] {qtellppp3d3t.csv};
	}
	\addlegendentryexpanded{$\IP^p$};
	\addlegendentryexpanded{$\QT^p$};
	\nextgroupplot[ymin=10^1,ymax=10^9.5, ymode=log,xmode=log,ylabel={cond(A)},xlabel={$h$},cycle list name=paulcolorsCOND, x dir=reverse,legend pos=south east]
		\foreach \p in {3,4,5}{
			\addplot+[discard if not={p}{\p},discard if not={qt}{1}] table [x=h, y=cond, col sep=comma] {qtell2d3tcond.csv};
			\addplot+[discard if not={p}{\p},discard if not={qt}{1},discard if={eoc_cond}{0}, only marks,
			visualization depends on=\thisrow{eoc_cond} \as \labela,
			nodes near coords=\pgfmathprintnumber{\labela}
			,
			every node near coord/.append style={
				black,
				draw=yellow!30,
				ellipse,
				fill=yellow!30,
				inner sep=1pt,
				xshift=-4.5ex,
				yshift=-1ex,
				scale=0.5,/pgf/number format/fixed,
				/pgf/number format/precision=2,/pgf/number format/fixed zerofill}
			] table [x=h, y=cond, col sep=comma] {qtell2d3tcond.csv};
			\addplot+[discard if not={p}{\p},discard if not={qt}{0}] table [x=h, y=cond, col sep=comma] {qtell2d3tcond.csv};
			\addplot+[discard if not={p}{\p},discard if not={qt}{0},discard if={eoc_cond}{0}, only marks,
			visualization depends on=\thisrow{eoc_cond} \as \labela,
			nodes near coords=\pgfmathprintnumber{\labela}
			,
			every node near coord/.append style={
				black,
				draw=yellow!30,
				ellipse,
				fill=yellow!30,
				inner sep=1pt,
				xshift=-4.5ex,
				yshift=-1.5ex,
				scale=0.45,/pgf/number format/fixed,
				/pgf/number format/precision=2,/pgf/number format/fixed zerofill}
			] table [x=h, y=cond, col sep=comma] {qtell2d3tcond.csv};}
		\legend{$p=3$,,,,$p=4$,,,,$p=5$}
	\end{groupplot}    

	\end{tikzpicture}}
    \end{center}
    \vspace{-2em}
	\caption{Left: $p$-convergence comparison between quasi-Trefftz and full polynomials DG in terms of degrees of freedom and computational time for the problem with coefficients \eqref{ex2} using $h = 0.1$.
    Right: Condition numbers of the quasi-Trefftz DG (solid lines) and the standard DG (dashed lines) matrices for the Dirichlet problem on the unit square stated in \cref{sec:cond}; the numbers in the yellow markers show the algebraic rate in $h$ of the corresponding segment.
    }
	\label{fig:cond}
	\end{figure}
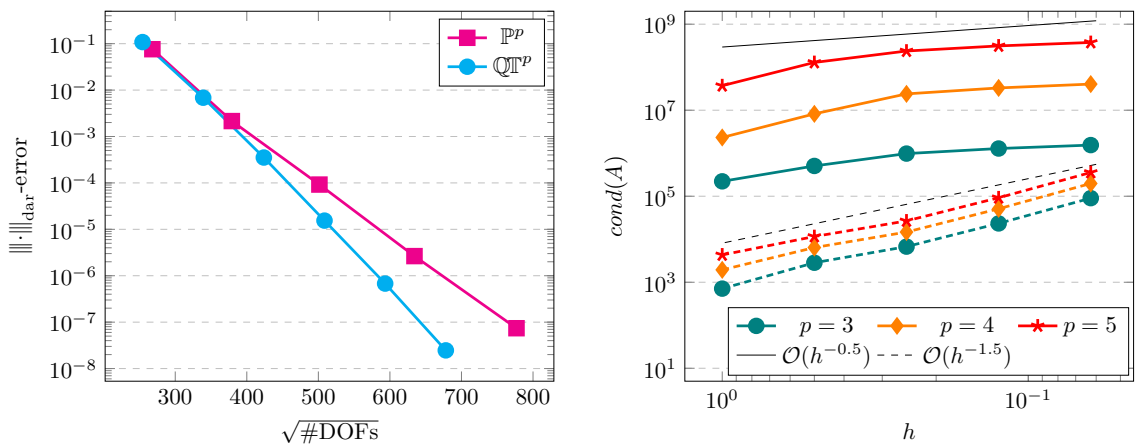
	
	\subsection{Advection-dominated problems}\label{ex3}
	
	We investigate advection-dominated problems to assess the capabilities of the method also in such more challenging setting.
	In section \ref{layer} we consider a solution that presents an internal layer while in section \ref{lshaped} a solution with boundary layers and corner singularities. 
	In both examples the advection field $\bbeta$ is divergence-free and the reaction $\sigma=0$, hence assumption \eqref{assumptionbeta} is violated.
	Even if the stability theory does not apply, the method performs well.
	
	\subsubsection{Internal layer}\label{layer}
	We consider the homogeneous problem $f\equiv 0$ with coefficients
	\begin{align*}
	\bK=\nu \bI_2,\qquad \bbeta=(x_2 e^{x_1-\frac12x_2^2},\;e^{x_1-\frac12x_2^2})^\top, \qquad \sigma=0,
	\end{align*}
	in $\Omega=(0,1)^2$. The streamlines of the divergence-free advection field $\bbeta$ are the parabolas $x_1=\frac{x_2^2}{2}+c$.
	We consider different values of the parameter $\nu$ to investigate the influence of the advection term: $\nu=10^{-j}$ for $j=1,2,3,4$. 
	We set the Dirichlet boundary $\Gamma_D=\{(x_1,x_2)\in\partial \Omega\mid x_1=0 \text{ or } x_2=0\}$ with the data $g_D=1$ if $x_1\le1/3$ and $g_D=0$ otherwise,
	and Neumann boundary $\Gamma_N=\partial \Omega\setminus \Gamma_D$ with the data $g_N=0$.
	The choice of the penalization parameter for this kind of problems is particularly delicate, as for small values of $\gamma$ coercivity fails, but large values of $\gamma$ often introduce spurious oscillations in the solution.
	Here we choose $\gamma = 100$ and $K_F=\kmin=\nu$.
	
	The results with mesh size $h=2^{-6}$ and $p=3$ are shown in \Cref{fig:numex3}, where we compare the results of the quasi-Trefftz method (lower row) to those of the full-polynomial DG (upper row).
	Both methods show similar results: the flat part of the solution is well approximated, and the discontinuity at the boundary and the internal layer are well captured with small oscillations.
	
	In Figure \ref{fig:nu} we investigate numerically the dependence of the error on the diffusion parameter $\nu$, recall Remark~\ref{rem::Peclet}.
	We show the $L^2(\Omega)$-norm of the error for $\nu=10^{-1},\dots,10^{-5}$.
	For each method, we compare the solutions obtained with $h=2^{-5}$ to the solutions obtained with a fine mesh ($h=2^{-7}$) with both methods. We see no significant difference between the two methods even for small values of $\nu$, indeed for both methods we observe a growth of order 
	$\mathcal{O}(\nu^{-0.2})$: the quasi-Trefftz space does not spoil the robustness of the DG scheme in the advection-dominated regime.
    We observe the same behavior also for the analogous problem with a small positive reaction coefficient $\sigma$ (numerical results not reported here), thus satisfying the assumptions of Theorem~\ref{th:errorestimate}.
	
	\begin{figure}[htbp]
	\vspace{-3mm}
	\includegraphics[width=0.24\linewidth]{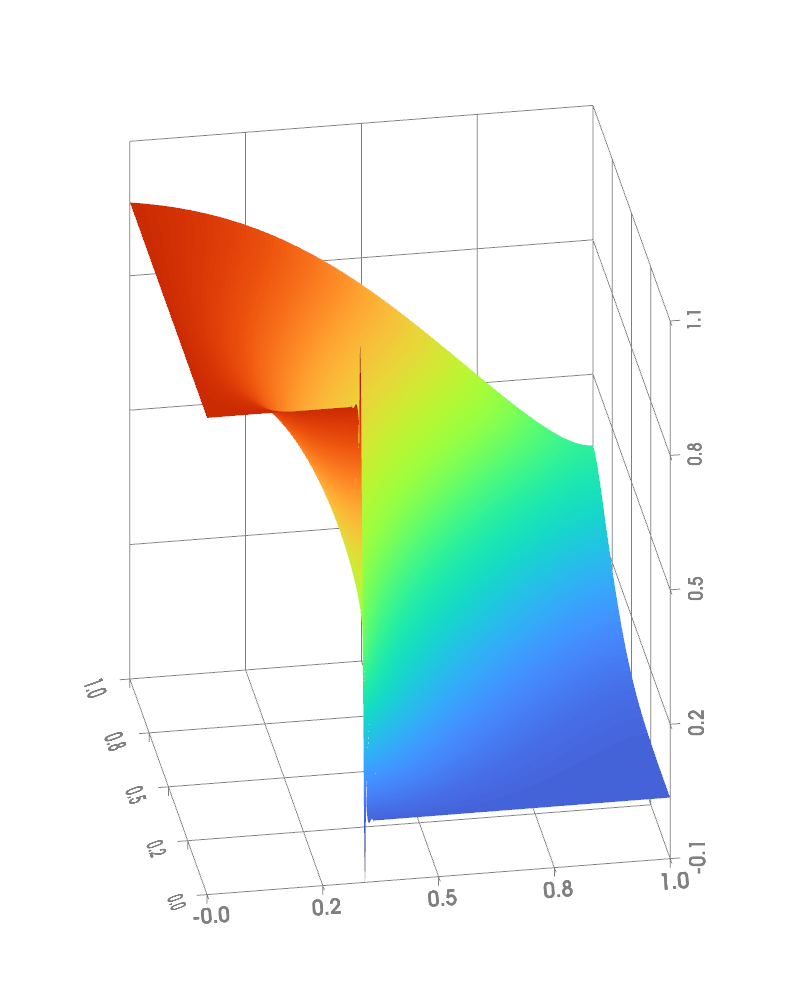}
	\includegraphics[width=0.24\linewidth]{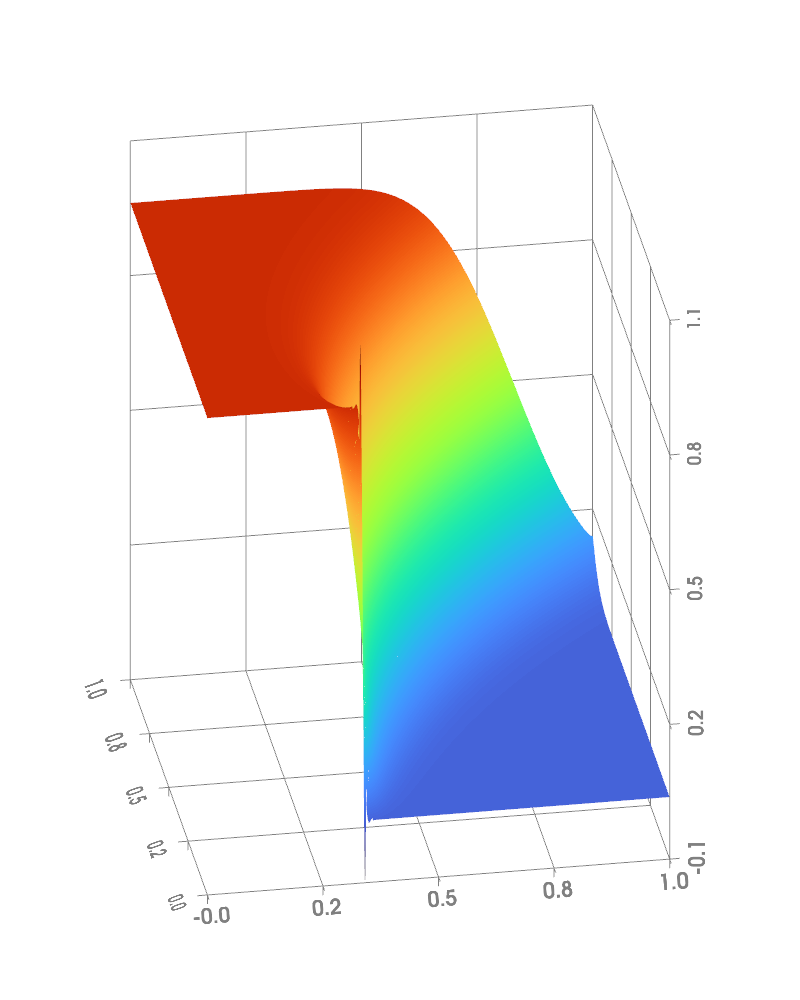}
	\includegraphics[width=0.24\linewidth]{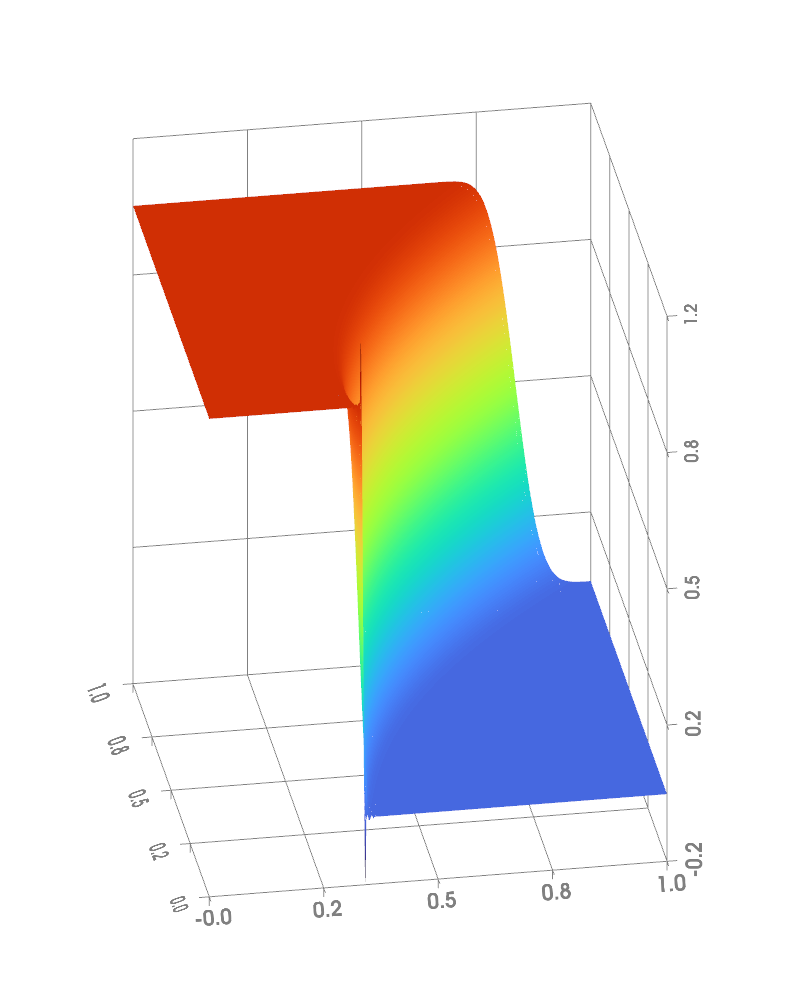}
	\includegraphics[width=0.24\linewidth]{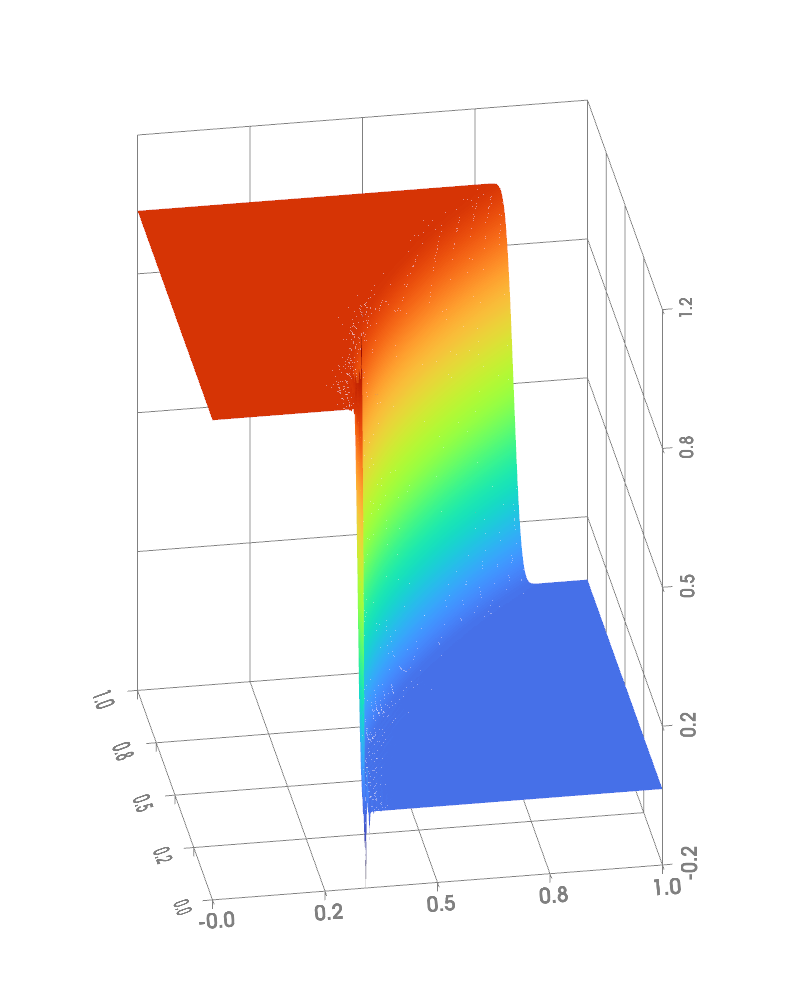}
	\vspace{-3mm}
	\includegraphics[width=0.24\linewidth]{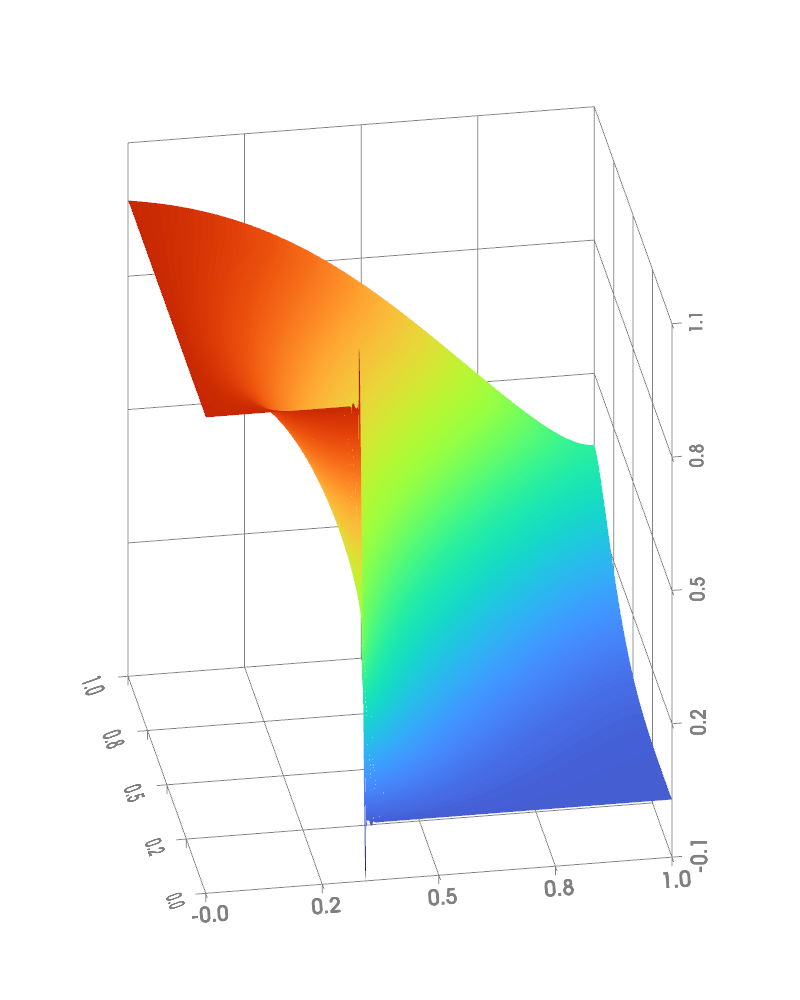}
	\includegraphics[width=0.24\linewidth]{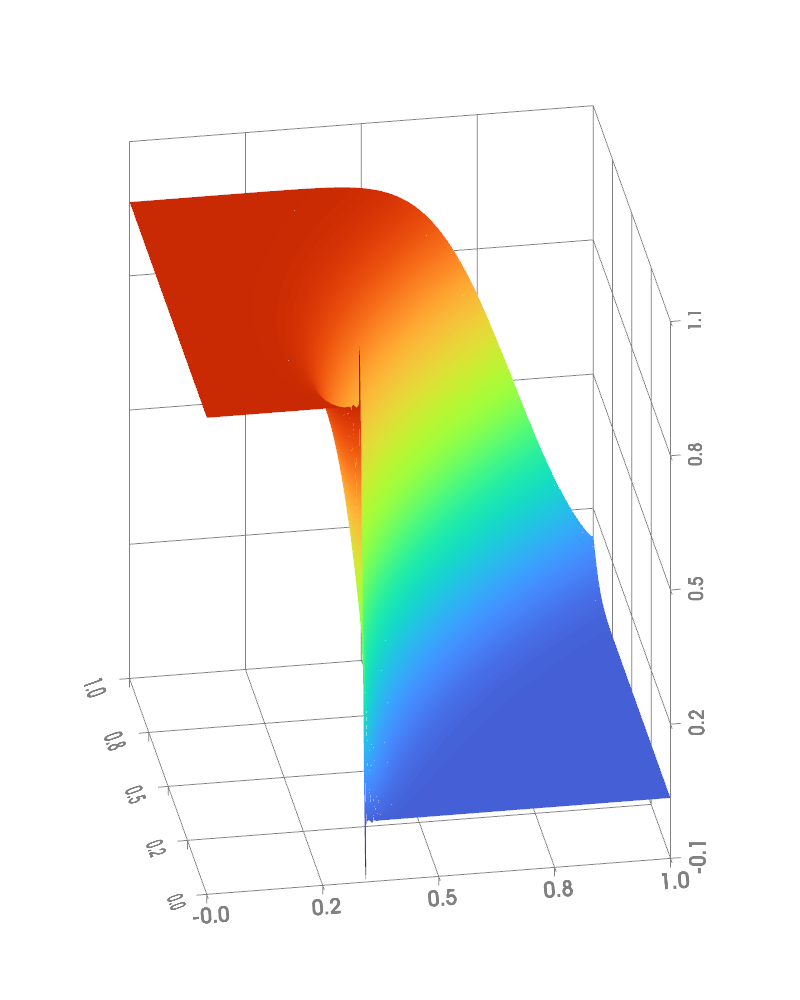}
	\includegraphics[width=0.24\linewidth]{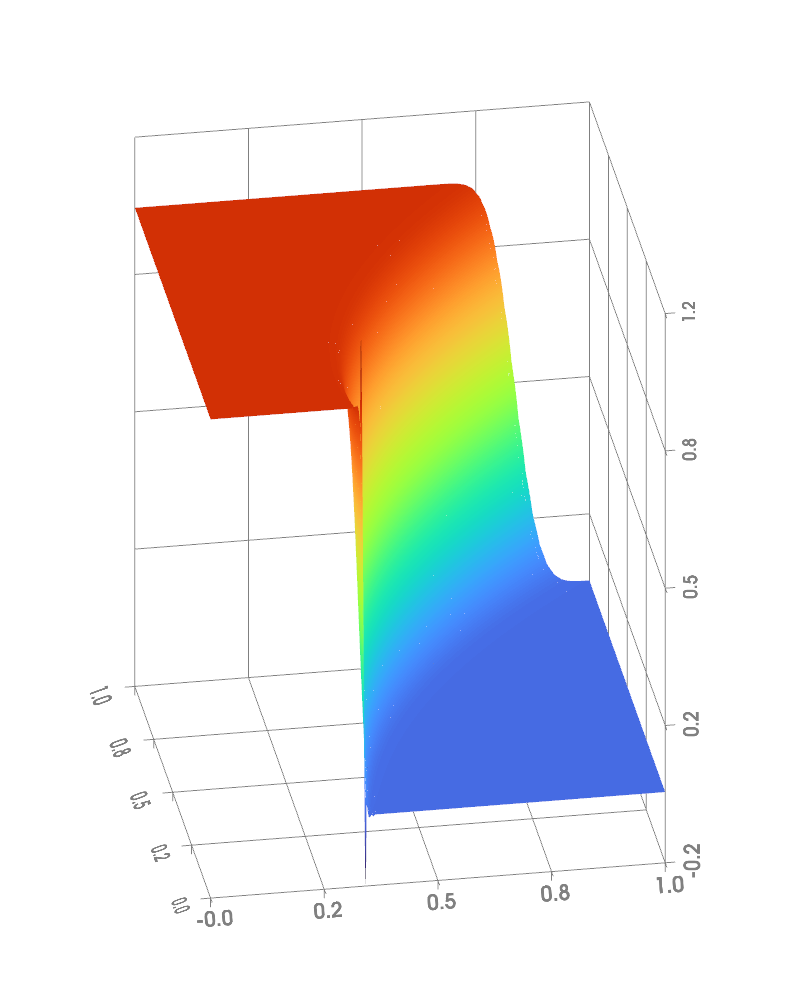}
	\includegraphics[width=0.24\linewidth]{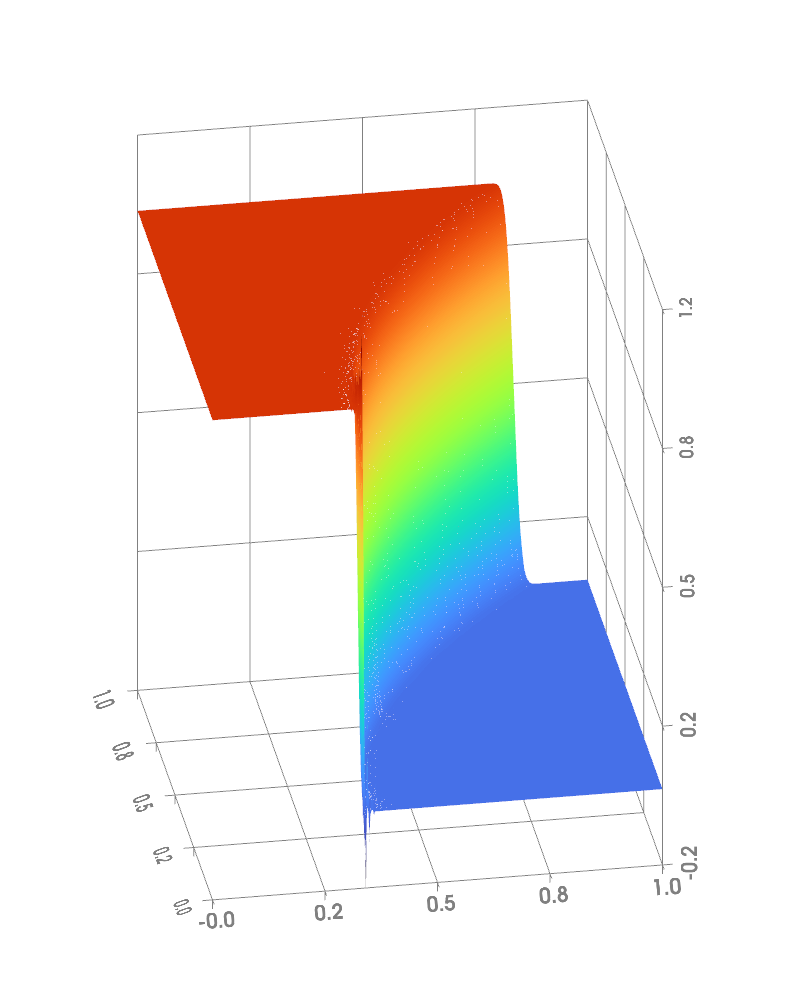}
	\caption{Numerical result for the advection-dominated problem of section \ref{layer}.
	The first row shows results for the full polynomial space and the the second for the quasi-Trefftz space.
	From the first to the last column we vary the diffusion coefficient $\nu=10^{-j}$ for $j=1,2,3,4$.}
	\label{fig:numex3}
	\end{figure}
	
	\begin{figure}[htbp]
	\pgfplotscreateplotcyclelist{pcolors}{
		magenta, every mark/.append style={solid,fill=magenta},mark=square,very thick,mark size=5pt\\
		cyan,every mark/.append style={solid,fill=cyan},mark=o,very thick,mark size=5pt\\
		orange,every mark/.append style={solid,fill=orange},mark=diamond,very thick,mark size=5pt\\
		violet, every mark/.append style={solid,fill=violet},mark=star,very thick,mark size=5pt\\
	}
	\begin{center}
		{
			\begin{tikzpicture}[scale=.85]
				\begin{groupplot}[
					group style={
						group name={my plots},
						group size=1 by 1,
						horizontal sep=2cm,
					},
					legend style={
						legend columns=1,
						},
					xmajorgrids=true,
					ymajorgrids=true,
					grid style=dashed,
					]      
					\nextgroupplot[
					ymode=log, 	xmode=log,  ylabel={$\N{\cdot}_{L^2(\Omega)}$-error},xlabel={$\nu$},cycle list name=pcolors,
					legend pos=outer north east,
					]
                    \foreach \qt in {0,1}{
						\addplot+[discard if not={qt}{\qt}] table [x=nu, y=error, col sep=comma] {Peclet.csv};
					}
					\foreach \qt in {1,0}{
					\addplot+[discard if not={qt}{\qt}] table [x=nu, y=error_cross, col sep=comma] {Peclet.csv};
				}
				\foreach \qt in {0}{
				\addplot+[discard if not={qt}{\qt},discard if={eoc}{nan}, only marks,
				visualization depends on=\thisrow{eoc} \as \labela,
				nodes near coords=\pgfmathprintnumber{\labela}
				,
				every node near coord/.append style={
					black,
					draw=yellow!30,
					ellipse,
					fill=yellow!30,
					inner sep=1pt,
					xshift=5ex,
					yshift=-2.5ex,
					scale=0.7,/pgf/number format/fixed,
					/pgf/number format/precision=2,/pgf/number format/fixed zerofill}
				] table [x=nu, y=error, col sep=comma] 
			{Peclet.csv};}			
\addlegendentryexpanded{$\N{u_{2^{-5},\IP}-u_{2^{-7},\IP}}_{L^2(\Omega)}$};
\addlegendentryexpanded{$\N{u_{2^{-5},\QT}-u_{2^{-7},\QT}}_{L^2(\Omega)}$};
\addlegendentryexpanded{$\N{u_{2^{-5},\IP}-u_{2^{-7},\QT}}_{L^2(\Omega)}$};
\addlegendentryexpanded{$\N{u_{2^{-5},\QT}-u_{2^{-7},\IP}}_{L^2(\Omega)}$};
	\end{groupplot}    
	\end{tikzpicture}}
	\end{center}
    \vspace{-1.5em}
	\caption{Error dependence on the diffusion parameter $\nu$ for the advection-dominated problem of section \ref{layer}. The numbers in the yellow boxes are the empirical rates.}
	\label{fig:nu}
\end{figure}
	
\subsubsection{L-shaped domain}\label{lshaped}
	We apply the method to a strongly advection-dominated BVP from \cite[sec.~4]{brezzi1998applications}. 
	The coefficients are 
	\begin{align}\label{eq:exL}
	\bK=\nu \bI_2,\qquad \bbeta=(-x_2,x_1)^\top,\qquad \sigma=0,
	\end{align}
	with $\nu=5\times 10^{-3}$.
	The source term is $f=0$, the problem is posed on the L-shaped domain $\Omega=(0,1)^2\setminus[0,0.5]^2$,
	and the Dirichlet boundary condition $g_D=1$ on $x_2=0$ and $g_D=0$ elsewhere is imposed on $\partial\Omega$.
	The solution $u$ exhibits boundary layers and corner singularities.
	
	We fix $K_F=\kmin=\nu$, $\gamma = 50$ and choose a mesh 4246 triangular elements and polynomial degree $p=3$.
	\Cref{fig:numex3} shows the quasi-Trefftz DG solution, in perfect visual agreement with \cite[Fig.~12]{brezzi1998applications}, and the difference against the full-polynomial space DG solution.
	We observe that this difference is concentrated at the singular corners and at the outflow layer.
	
	\begin{figure}[htbp]
	\vspace{-1mm}
    \begin{center}
	\includegraphics[width=0.49\linewidth,clip,trim=140 120 30 120]{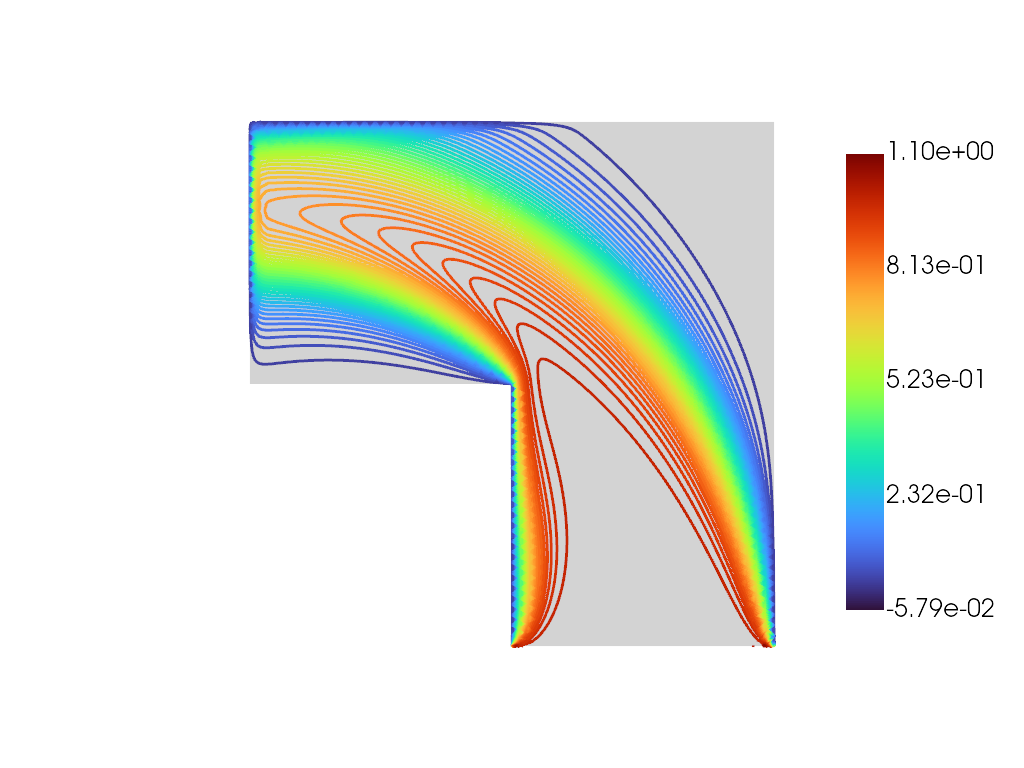}
	\includegraphics[width=0.49\linewidth,clip,trim=140 120 30 120]{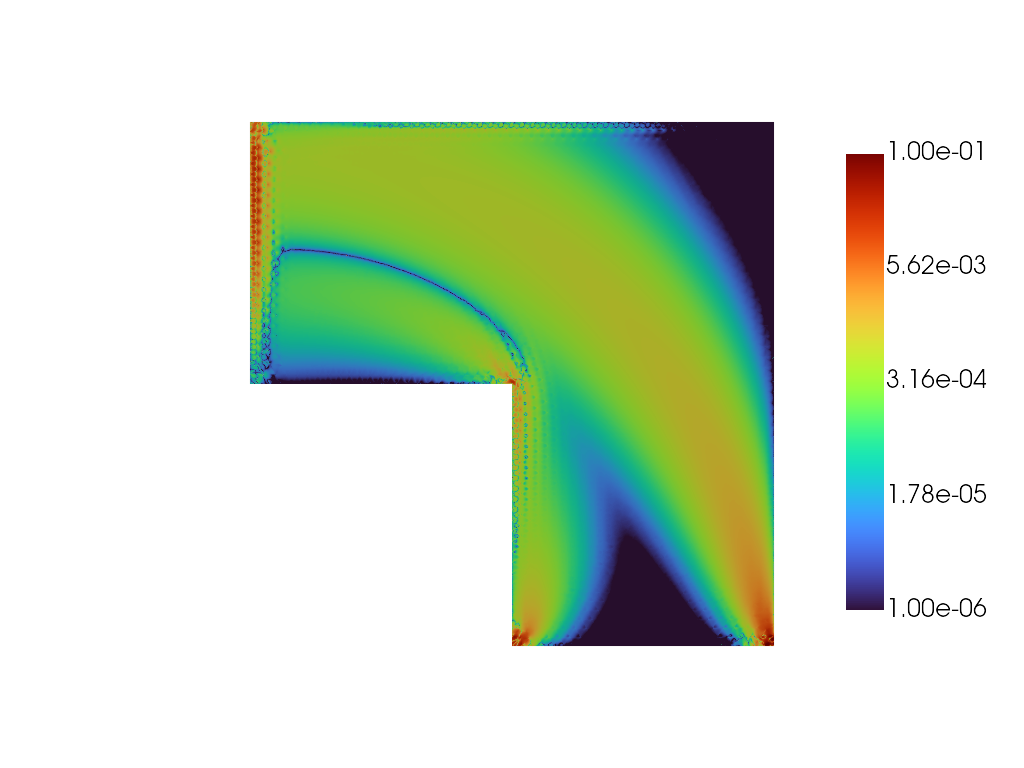}
    \end{center}
\vspace{-1em}
	\caption{Numerical result for the Dirichlet problem \eqref{eq:exL} on the L-shaped domain, computed using $h=0.02$, $p=3$ and $\gamma=50$.
	Left: contour plot of the quasi-Trefftz DG discrete solution, linear color scale (cf.\ \cite[Fig.~12]{brezzi1998applications}). 
	Right: difference between the solutions of the full-polynomial and the quasi-Trefftz DG scheme on the same mesh, logarithmic color scale truncated at $10^{-6}$.}
	\label{fig:Lshaped}
	\end{figure}
	
	\section{Conclusions and future developments} 
	We have examined the polynomial quasi-Trefftz space for linear PDEs with smooth coefficients and right-hand side. We have shown it can approximate smooth solutions to the PDE with the same accuracy as the full polynomial space but requiring fewer degrees of freedom, and described an algorithm for the construction of a quasi-Trefftz basis.
	Then we have analyzed a quasi-Trefftz DG method for elliptic diffusion--advection--reaction BVPs with piecewise-smooth data,
	proving optimal-rate $h$-convergence, as confirmed by numerical results.
	
	Further investigation into the properties of the quasi-Trefftz basis functions is needed to optimize the choice of the Cauchy data, i.e.\ of the $m$ polynomial bases in the initialization step of the algorithm for the construction of the quasi-Trefftz basis functions, aiming at further improving accuracy, conditioning and computing time.
	
	Analysis of non-polynomial quasi-Trefftz functions could be useful for efficiently approximating solutions with boundary layers or less regular solutions, such as those with corner singularities.
	
	Further research is required to obtain approximation estimates in Sobolev norms and to establish optimal DG error bounds in $L^2(\Omega)$-norm, as suggested by the numerics. 
	A challenging extension, which has not yet been achieved for quasi-Trefftz methods, is the analysis of the approximation properties for increasing polynomial degrees (\textit{p}-convergence).
	
	Another interesting extension is the application of this method to PDEs whose nature changes in the domain, such as the Euler-Tricomi equation ($\partial^2_{x}u +x \partial^2_{y}u= 0$), modeling transonic flow. 	
	
	\section*{Acknowledgements}
	LMIG, AM and PS gratefully acknowledge the Centro Internazionale per la Ricerca Matematica (CIRM, Trento) for hosting them in the Research-in-Pairs program.
	AM and CP acknowledge support from PRIN projects ``ASTICE'' (202292JW3F) and ``NA-FROM-PDEs'' (201752HKH8), GNCS--INDAM, and PNRR-M4C2-I1.4-NC-HPC-Spoke6, which are partly funded by the European Union -- NextGenerationEU.
This research was funded in part by the Austrian Science Fund (FWF) 
\href{https://doi.org/10.55776/F65}{10.55776/F65} and 
\href{https://doi.org/10.55776/ESP4389824}{10.55776/ESP4389824}.
For open access purposes, the authors have applied a CC BY public copyright license to any author-accepted manuscript version arising from this submission.
	LMIG acknowledges support from the US National Science Foundation (NSF): this material is based upon work supported by the NSF under Grant No. \href{https://www.nsf.gov/awardsearch/showAward?AWD_ID=2110407&HistoricalAwards=false}{DMS-2110407}.
	
	\printbibliography[heading=bibintoc,title={Bibliography}]
	
	\appendix
	\section{Estimates on jump--average terms}\label{s:Preliminaryestimates}
	\begin{lemma}\label{lemma:Appendix1}
	For all $v,w\in H^2(\calT_h)$, 
	\begin{equation}\label{consistencytermbound}
	\Bigg|\sum_{F\in\calF_h^{\mathrm I}\cup \calF_h^{\mathrm D}} \int_F  \mvl{\bK\nabla v} \cdot \jmp{ w}\Bigg| \leq
	\N{\bK}_{L^{\infty}(\Omega)}^{\frac12}
	\Bigg(\sum_{E\in\calT_h} \sum_{F\in\calF_E} \frac{h_F}{\gamma K_F} 
	\N{(\bK^{\frac12}\nabla v)_{|_E}\cdot \bn_F}_{L^2(F)}^2
	\Bigg)^{\frac12} \abs{w}_{\mathrm J}.
	\end{equation}
	\end{lemma}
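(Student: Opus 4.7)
The plan is to combine a weighted Cauchy--Schwarz argument on the mesh skeleton with an algebraic step exchanging $\bK$ for $\bK^{\frac12}$, then reorganize the facet sum as a sum over (element, facet-of-element) pairs. The starting observation is that on every $F\in\calF_h^{\mathrm I}\cup\calF_h^{\mathrm D}$ the jump $\jmp{w}$ is aligned with $\bn_F$, so $\jmp{w}=[w]_F\,\bn_F$ with $[w]_F$ a scalar (the scalar jump on interior facets and the trace on Dirichlet ones). Hence only the normal component of $\mvl{\bK\nabla v}$ contributes:
\begin{equation*}
\mvl{\bK\nabla v}\cdot\jmp{w}=[w]_F\,\mvl{(\bK\nabla v)\cdot\bn_F}.
\end{equation*}

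First I would multiply and divide by $(\gamma K_F/h_F)^{\frac12}$ on each facet and apply the Cauchy--Schwarz inequality pointwise on $F$, then the discrete Cauchy--Schwarz across $\calF_h^{\mathrm I}\cup\calF_h^{\mathrm D}$. This produces the factor $|w|_{\mathrm J}$ from the jump side, and the weighted sum
\begin{equation*}
\Biggl(\sum_{F\in\calF_h^{\mathrm I}\cup\calF_h^{\mathrm D}}\frac{h_F}{\gamma K_F}\N{\mvl{(\bK\nabla v)\cdot\bn_F}}_{L^2(F)}^{2}\Biggr)^{\frac12}
\end{equation*}
from the average side. Next, on each interior facet $F=\partial E_1\cap\partial E_2$ the pointwise inequality $|(a+b)/2|^{2}\le(a^{2}+b^{2})/2$ splits the squared average into two one-sided contributions; Dirichlet facets are already one-sided. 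Re-indexing with the identity $\sum_F\sum_{E\supset F}=\sum_{E\in\calT_h}\sum_{F\in\calF_E}$ then matches the double-sum structure in \eqref{consistencytermbound}.

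The last ingredient is the pointwise algebraic bound that turns $(\bK\nabla v)\cdot\bn_F$ into the $\bK^{\frac12}$-weighted quantity on the right-hand side, together with the prefactor $\N{\bK}^{\frac12}_{L^{\infty}(\Omega)}$. The crucial identity, using the symmetry of $\bK^{\frac12}$, is
\begin{equation*}
(\bK\nabla v)\cdot\bn_F=(\bK^{\frac12}\nabla v)\cdot(\bK^{\frac12}\bn_F),
\end{equation*}
which combined with $|\bK^{\frac12}\bn_F|^{2}=\bn_F^{\top}\bK\bn_F\le\N{\bK}_{L^{\infty}(\Omega)}$ extracts the desired $\N{\bK}^{\frac12}_{L^\infty(\Omega)}$ factor. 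This is the step I expect to require the most care, because for a genuine tensor coefficient the vectors $\bn_F$ and $\bK^{\frac12}\bn_F$ are not collinear, so obtaining the precise $L^{2}(F)$ quantity advertised in \eqref{consistencytermbound} (with the dot product against $\bn_F$ rather than the full Euclidean norm) hinges on handling the geometric alignment correctly; once this pointwise estimate is in hand, inserting it into the per-element contributions from the previous paragraph and collecting the constants delivers the claimed inequality.
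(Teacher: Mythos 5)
Your skeleton coincides with the paper's proof: the same weighted Cauchy--Schwarz on the skeleton (multiplying and dividing by $(\gamma K_F/h_F)^{1/2}$) to produce $\abs{w}_{\mathrm J}$, the same split $\abs{(a+b)/2}^2\le(a^2+b^2)/2$ of the squared average on interior facets, and the same re-indexing of the facet sum as $\sum_{E}\sum_{F\in\calF_E}$. The one step you flag as delicate is, however, exactly the step you do not close, and the route you sketch does not deliver the stated inequality. The chain $(\bK\nabla v)\cdot\bn_F=(\bK^{1/2}\nabla v)\cdot(\bK^{1/2}\bn_F)\le\abs{\bK^{1/2}\nabla v}\,\abs{\bK^{1/2}\bn_F}\le\N{\bK}^{1/2}_{L^\infty(\Omega)}\abs{\bK^{1/2}\nabla v}$ produces the \emph{full Euclidean norm} $\N{\bK^{1/2}\nabla v}_{L^2(F)}$ on the right, which \emph{dominates} the normal-component quantity $\N{(\bK^{1/2}\nabla v)_{|_E}\cdot\bn_F}_{L^2(F)}$ appearing in \eqref{consistencytermbound}; so completing your argument this way proves a formally weaker bound, not the lemma as stated. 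Since the lemma is used to control $\mathfrak{T}_3$ by the term $\sum_E h_E\N{\bK^{1/2}\nabla v\cdot\bn_E}^2_{L^2(\partial E)}$ inside $\vertiii{\cdot}_{\mathrm{dar},*}$, the normal-component form is the one actually needed, and your proposal leaves that unproven.

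For what it is worth, your suspicion about the geometric alignment is well founded. The paper's own proof writes $(\bK\nabla v)_{|_{E_i}}\cdot\bn_F=\bigl(\bK^{1/2}_{|_{E_i}}(\bK^{1/2}\nabla v)_{|_{E_i}}\bigr)\cdot\bn_F$ and then bounds this directly by $\N{\bK}^{1/2}_{L^\infty(\Omega)}\,\abs{(\bK^{1/2}\nabla v)_{|_{E_i}}\cdot\bn_F}$ without comment. That pointwise inequality is immediate when $\bK^{1/2}\bn_F$ is parallel to $\bn_F$ (i.e.\ $\bn_F$ is an eigenvector of $\bK$, in particular for scalar diffusion $\bK=\kappa\bI$, which covers all the paper's numerical experiments), but for a general SPD tensor it fails: taking $z\perp\bn_F$ with $\bK^{1/2}\bn_F\not\parallel\bn_F$ gives $z\cdot\bn_F=0$ while $(\bK^{1/2}z)\cdot\bn_F=z\cdot(\bK^{1/2}\bn_F)\neq0$. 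So to make either argument airtight for anisotropic $\bK$ one must either assume this alignment or restate the right-hand side of \eqref{consistencytermbound} (and the corresponding term of $\vertiii{\cdot}_{\mathrm{dar},*}$) with $\N{\bK^{1/2}\nabla v}_{L^2(F)}$ in place of its normal component. As a review of your proposal: the structure is right, but the decisive estimate is missing, and the version of it you propose does not imply the claim.
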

	\begin{proof}
	Using the Cauchy--Schwarz inequality and recalling the definition \eqref{eq:Norms} of $|\cdot|_{\mathrm J}$, we have
	\begin{equation*}
	\begin{split}
	\Bigg|\sum_{F\in\calF_h^{\mathrm I}\cup \calF_h^{\mathrm D}} \int_F  \mvl{\bK\nabla v} \cdot \jmp{ w}\Bigg|
	\leq &
	\Bigg(\sum_{F\in\calF_h^{\mathrm I}\cup \calF_h^{\mathrm D}}  \frac{h_F}{\gamma K_F}
	\int_F \left(\mvl{\bK\nabla v}\cdot \bn_F\right)^{2}
	\Bigg)^{\frac12}
	|w|_{\mathrm J}.
	\end{split}
	\end{equation*}
	In the first term, for all $F=\partial E_1 \cap \partial E_2\in\calF_h^{\mathrm I}$, Young's inequality yields
	\begin{align*}
	\int_F \left( \mvl{\bK   \nabla v} \cdot \bn_F\right)^2&
	=\int_F\left[\frac12 
	\left(\bK_{|_{E_1}}^{\frac12} (\bK^{\frac12}\nabla v)_{|_{E_1}}+\bK_{|_{E_2}}^{\frac12} (\bK^{\frac12}\nabla v)_{|_{E_2}}\right)  \cdot \bn_F\right]^2\\
	&	\leq
	\frac12 	\N{\bK}_{L^{\infty}(\Omega)} 
	\bigg(\N{(\bK^{\frac12}\nabla v)_{|_{E_1}} \cdot \bn_F}^2_{L^2(F)}+\N{(\bK^{\frac12}\nabla v)_{|_{E_2}}\cdot \bn_F}^2_{L^2(F)}\bigg).
	\end{align*}
	For all $F\in\calF_h^{\mathrm D}$ with $F\subset\partial E$, we obtain
	$
	\int_F \left( \mvl{\bK   \nabla v} \cdot \bn_F\right)^2
	\leq
	\N{\bK}_{L^{\infty}(\Omega)} \|(\bK^{\frac12}\nabla v)_{|_E} \cdot \bn_F\|^2_{L^2(F)}.
	$
	Combining these two bounds, the thesis is derived by collecting the facet contributions of each mesh element.
	\end{proof}

	\begin{lemma}\label{lemma:Appendix2}
	For all $(v,w_h)\in H^2(\calT_h) \times  V_h$,
	\begin{equation}\label{consistencyterm}
	\Bigg|\sum_{F\in\calF_h^{\mathrm I}\cup \calF_h^{\mathrm D}} \int_F  \jmp{v}\cdot \mvl{\bK   \nabla w_h} \Bigg| 
	\leq
	\abs{v}_{\mathrm J}
	\frac{\N{\bK}_{L^{\infty}(\Omega)}}{\kmin }
	\bigg( \frac{N_{\partial} (p+1)(p+d)}{\gamma\,r_\star}\bigg)^{\frac12}
	\Bigg(\sum_{E\in\calT_h}\N{\bK^{\frac12}\nabla w_h}_{L^2( E)}^2\Bigg)^{\frac12}.
	\end{equation}
	\end{lemma}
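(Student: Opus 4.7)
The plan is to mirror the proof of \Cref{lemma:Appendix1}, with the roles of the two factors exchanged: since here it is $w_h$ (not $v$) that is polynomial, after isolating the jump seminorm $\abs{v}_{\mathrm J}$ via Cauchy--Schwarz, we can bring the normal gradient back from the facets into the elements by invoking the discrete inverse trace inequality \eqref{discretetraceinequality}. Concretely, I would first insert the balanced weight $\sqrt{\gamma K_F/h_F}\cdot\sqrt{h_F/(\gamma K_F)}$ in each facet integrand, then apply the Cauchy--Schwarz inequality in $L^2(F)$ followed by a discrete Cauchy--Schwarz across facets in $\calF_h^{\mathrm I}\cup\calF_h^{\mathrm D}$, obtaining
\[
\Bigl|\sum_{F\in\calF_h^{\mathrm I}\cup\calF_h^{\mathrm D}}\int_F \jmp{v}\cdot \mvl{\bK\nabla w_h}\Bigr|\le \abs{v}_{\mathrm J}\Bigl(\sum_F \frac{h_F}{\gamma K_F}\N{\mvl{\bK\nabla w_h}\cdot\bn_F}^2_{L^2(F)}\Bigr)^{1/2}.
\]

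The central step is to control the second factor in terms of $\N{\bK^{1/2}\nabla w_h}_{L^2(E)^d}$. On each interior facet $F=\partial E_1\cap\partial E_2$, convexity yields $(\mvl{\bK\nabla w_h}\cdot\bn_F)^2\le \tfrac12\sum_{k=1,2}(\bK\nabla w_h|_{E_k}\cdot\bn_F)^2$, and the analogous one-sided inequality holds on Dirichlet facets. The pointwise estimate $(\bK\bxi\cdot\bn)^2\le \N{\bK}_{L^\infty(\Omega)}\,\bxi^{\top}\bK\bxi$, which follows from the matrix inequality $\bK\bn\bn^{\top}\bK\preceq \N{\bK}_2\,\bK$ (itself a consequence of Cauchy--Schwarz), combined with $|\bK^{1/2}\bxi|^2\le \N{\bK}_{L^\infty(\Omega)}|\bxi|^2$ and the ellipticity bound $|\bxi|^2\le \kmin^{-1}|\bK^{1/2}\bxi|^2$, reduces matters to estimating $\N{\nabla w_h}^2_{L^2(F)^d}$. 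Swapping the order of summation into $\sum_{E\in\calT_h}\sum_{F\in\calF_E}$ and using $h_F\le h_E$ and $K_F\ge \kmin$, the inner sum is then $\N{\nabla w_h}^2_{L^2(\partial E)^d}$.

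Since each component $\partial_i w_h\in \IP^{p-1}(E)\subset \IP^p(E)$, the discrete inverse trace inequality \eqref{discretetraceinequality} applied componentwise gives $\N{\nabla w_h}^2_{L^2(\partial E)^d}\le \frac{(p+1)(p+d)}{r_\star h_E}\N{\nabla w_h}^2_{L^2(E)^d}$, whence the factor $h_E$ cancels with the one coming from $h_F/K_F$. A final ellipticity estimate $\N{\nabla w_h}^2_{L^2(E)^d}\le \kmin^{-1}\N{\bK^{1/2}\nabla w_h}^2_{L^2(E)^d}$ brings in the target norm, and taking the square root produces \eqref{consistencyterm}. The main bookkeeping difficulty is tracking the two successive applications of ellipticity --- one pointwise on the facet, one integrated on the element --- so that the quadratic prefactor $\N{\bK}^2_{L^\infty(\Omega)}/\kmin^2$ emerges correctly under the square root. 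The factor $N_\partial$ appears in the aggregation step, where each interior facet contributes to its two adjacent elements; it can be absorbed either through the Young inequality used on the average or through a coarser facet-by-facet version of the inverse trace bound.
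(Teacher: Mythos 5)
Your proposal is correct and follows essentially the same route as the paper's proof: weighted Cauchy--Schwarz to extract $\abs{v}_{\mathrm J}$, Young's inequality on the average, a pointwise bound pulling out $\N{\bK}_{L^{\infty}(\Omega)}^2$ so that only the \emph{polynomial} field $\nabla w_h$ remains on the facets, reorganization into element sums with $h_F\le h_E$ and $K_F\ge\kmin$, the discrete inverse trace inequality \eqref{discretetraceinequality} applied componentwise to $\partial_i w_h\in\IP^{p-1}(E)\subset\IP^p(E)$, and a final ellipticity step to reach $\N{\bK^{1/2}\nabla w_h}_{L^2(E)}$. The only cosmetic difference is the bookkeeping of $N_\partial$: the paper obtains it by applying the trace inequality facet by facet (each facet of $E$ contributing the full element norm), whereas your primary aggregation via $\sum_{F\in\calF_E}\N{\cdot}^2_{L^2(F)}=\N{\cdot}^2_{L^2(\partial E)}$ would in fact yield the slightly sharper bound without $N_\partial$, which implies the stated one since $N_\partial\ge 1$.
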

	\begin{proof}
	As in the previous proof, Cauchy--Schwarz inequality leads to
	\begin{equation*}
	\begin{split}
	\Bigg|\sum_{F\in\calF_h^{\mathrm I}\cup \calF_h^{\mathrm D}} \int_F  \jmp{v}\cdot \mvl{\bK   \nabla w_h}\Bigg|
	\leq 
	\Bigg(\sum_{F\in\calF_h^{\mathrm I}\cup \calF_h^{\mathrm D}}  \frac{h_F}{\gamma K_F}
	\int_F \left(\mvl{\bK\nabla w_h}\cdot \bn_F\right)^{2}
	\Bigg)^{\frac12}
	|v|_{\mathrm J}.
	\end{split}
	\end{equation*}
	For all $F\in\calF_h^{\mathrm I}$ with $F=\partial E_1 \cap \partial E_2$, Young's inequality yields
	\begin{align*}
	\int_F \left( \mvl{\bK   \nabla w_h} \cdot \bn_F\right)^2&
	=\int_F\left[\frac12 
	\left(	(\bK\nabla w_h)_{|_{E_1}}+ (\bK\nabla w_h)_{|_{E_2}}\right)  \cdot \bn_F\right]^2\\
	&	\leq
	\frac12 	\N{\bK}_{L^{\infty}(\Omega)}^2 \left(\N{(\nabla w_h)_{|_{E_1}} \cdot \bn_F}^2_{L^2(F)}+\N{(\nabla w_h)_{|_{E_2}}\cdot \bn_F}^2_{L^2(F)}\right),
	\end{align*}
	and for $F\in\calF_h^{\mathrm D}$ with $F\subset\partial E$, we have 
	$\int_F \left( \mvl{\bK   \nabla w_h} \cdot \bn_F\right)^2
	\leq \N{\bK}_{L^{\infty}(\Omega)}^2 \N{(\nabla w_h)_{|_E} \cdot \bn_F}^2_{L^2(F)}$.
	Aggregating the contributions of each element, owing to the facts that $h_F\leq h_E$ for all $F\in\calF_E$, $E\in\calT_h$, that $\kmin \leq K_F$ for all $F\in\calF_h$, and to the discrete trace inequality \eqref{discretetraceinequality}, we deduce 
	\begin{align}\label{Koutside}
	&\Bigg|\sum_{F\in\calF_h^{\mathrm I}\cup \calF_h^{\mathrm D}} \int_F  \jmp{v}\cdot \mvl{\bK\nabla w_h} \Bigg| 
	\leq \abs{v}_{\mathrm J}
	\N{\bK}_{L^{\infty}(\Omega)}
	\bigg(\sum_{E\in\calT_h} \sum_{F\in\calF_E} \frac{h_F}{\gamma K_F} 
	\N{(\nabla w_h)_{|_E}\cdot \bn_F}_{L^2(F)}^2
	\bigg)^{\frac12}\\
	& \leq\abs{v}_{\mathrm J}
	\N{\bK}_{L^{\infty}(\Omega)} 
	\bigg( 	\sum_{E\in\calT_h} \sum_{F\in\calF_E} \frac{h_E}{\gamma \kmin }\frac{(p+1)(p+d)}{r_\star}  h_E^{-1}
	\N{\nabla w_h}_{L^2(E)}^2
	\bigg)^{\frac12} .\nonumber
	\end{align}
	The assertion is obtained recalling the definition \eqref{Npartial} of $N_{\partial}$ and using the ellipticity condition \eqref{ellipticity} as $\|\nabla w_h\|\le \kmin ^{-\frac12}\|\bK^\frac12\nabla w_h\|$.
	\end{proof}
	
	In the proof of \Cref{lemma:Appendix2} we could have used the bound \eqref{consistencytermbound} where $\bK$ is already inside the $L^2$-norm, instead of using \eqref{Koutside} and paying the factor $\frac{1}{\kmin }$. 
	However, in general $\bK^{\frac12}\nabla v_h$ is not a polynomial, so the classical discrete trace inequality \eqref{discretetraceinequality} would not be applicable.
\end{document}